\theoremstyle{plain}
\newtheorem{theorem}{Theorem}[section]
\newtheorem{lemma}[theorem]{Lemma}
\newtheorem{proposition}[theorem]{Proposition}
\newtheorem{corollary}[theorem]{Corollary}
\theoremstyle{definition}
\newtheorem{definition}[theorem]{Definition}
\newtheorem{remark}[theorem]{Remark}
\DeclareMathOperator{\dv}{div}
\DeclareMathOperator{\sym}{sym}
\DeclareMathOperator{\Tr}{Tr}
\DeclareMathOperator{\Ric}{Ric}
\DeclareMathOperator{\sff}{\mathrm{I\!I}}
\DeclareMathOperator{\Riem}{Riem}
\DeclareMathOperator{\ein}{ein}
\DeclareMathOperator{\df}{def}
\newcommand{\gt}{\widetilde{g}}
\newcommand{\nrm}[1]{\left|\!\left|\!\left|{#1}\right|\!\right|\!\right|}
\newcommand{\nrmt}[1]{\left|\!\left|\!\left|\!\left|{#1}\right|\!\right|\!\right|\!\right|}
\newcommand{\textoverline}[1]{$\overline{\mbox{#1}}$}
\DeclareFontFamily{OMX}{MnSymbolE}{}
\DeclareSymbolFont{MnLargeSymbols}{OMX}{MnSymbolE}{m}{n}
\DeclareFontShape{OMX}{MnSymbolE}{m}{n}{
    <-6>  MnSymbolE5
   <6-7>  MnSymbolE6
   <7-8>  MnSymbolE7
   <8-9>  MnSymbolE8
   <9-10> MnSymbolE9
  <10-12> MnSymbolE10
  <12->   MnSymbolE12
}{}
\DeclareFontShape{OMX}{MnSymbolE}{b}{n}{
    <-6>  MnSymbolE-Bold5
   <6-7>  MnSymbolE-Bold6
   <7-8>  MnSymbolE-Bold7
   <8-9>  MnSymbolE-Bold8
   <9-10> MnSymbolE-Bold9
  <10-12> MnSymbolE-Bold10
  <12->   MnSymbolE-Bold12
}{}
\let\llangle\@undefined
\let\rrangle\@undefined
\DeclareMathDelimiter{\llangle}{\mathopen}%
                     {MnLargeSymbols}{'164}{MnLargeSymbols}{'164}
\DeclareMathDelimiter{\rrangle}{\mathclose}%
                     {MnLargeSymbols}{'171}{MnLargeSymbols}{'171}
\begin{document}

\title{Finite element approximation of the Einstein tensor}

\author{Evan S. Gawlik\thanks{Department of Mathematics, University of Hawai`i at M\textoverline{a}noa, Honolulu, HI, 96822, USA, \href{egawlik@hawaii.edu}{egawlik@hawaii.edu} } \and Michael Neunteufel\thanks{Institute for Analysis and Scientific Computing, TU Wien, Wiedner Hauptstr.~8-10, 1040 Wien, Austria, \href{michael.neunteufel@tuwien.ac.at}{michael.neunteufel@tuwien.ac.at}}}

\date{}

\maketitle

\begin{abstract}
We construct and analyze finite element approximations of the Einstein tensor in dimension $N \ge 3$.  We focus on the setting where a smooth Riemannian metric tensor $g$ on a polyhedral domain $\Omega \subset \mathbb{R}^N$ has been approximated by a piecewise polynomial metric $g_h$ on a simplicial triangulation $\mathcal{T}$ of $\Omega$ having maximum element diameter $h$.  We assume that $g_h$ possesses single-valued tangential-tangential components on every codimension-1 simplex in $\mathcal{T}$.  Such a metric is not classically differentiable in general, but it turns out that one can still attribute meaning to its Einstein curvature in a distributional sense.  We study the convergence of the distributional Einstein curvature of $g_h$ to the Einstein curvature of $g$ under refinement of the triangulation.  We show that in the $H^{-2}(\Omega)$-norm, this convergence takes place at a rate of $O(h^{r+1})$ when $g_h$ is an optimal-order interpolant of $g$ that is piecewise polynomial of degree $r \ge 1$.  We provide numerical evidence to support this claim.
\end{abstract}

\section{Introduction} \label{sec:intro}

The Einstein tensor $G = \Ric - \frac{1}{2}Rg$ encodes important information about the geometry of a pseudo-Riemannian manifold.  It is a symmetric $(0,2)$-tensor field constructed from the Ricci tensor $\Ric$, the scalar curvature $R$, and the metric tensor $g$ whose divergence vanishes by the contracted Bianchi identity.  It features prominently in the Einstein field equations of general relativity, which relate $G$ to the stress-energy permeating spacetime.  

This paper is devoted to the design and analysis of finite element approximations of the Einstein tensor in dimension $N \ge 3$.  (In dimension $N=2$, the Einstein tensor automatically vanishes.)  We consider the setting where a smooth Riemannian metric tensor on a polyhedral domain $\Omega \subset \mathbb{R}^N$ is approximated by a metric tensor belonging to the \emph{Regge finite element space}: the space of symmetric $(0,2)$-tensor fields on $\Omega$ that are piecewise polynomial with respect to a simplicial triangulation $\mathcal{T}$ of $\Omega$ and possess single-valued tangential-tangential components on every codimension-1 simplex in $\mathcal{T}$ \cite{christiansen2004characterization, christiansen2011linearization, li2018regge}.  In general, we refer to any piecewise smooth (not necessarily piecewise polynomial) Riemannian metric tensor with the aforementioned continuity property as a \emph{Regge metric}.  We address two questions: (1) Given a Regge metric, how should one ascribe meaning to its Einstein tensor?  (2) If that Regge metric is piecewise polynomial and approximates a smooth Riemannian metric in a suitable sense, is its Einstein tensor close to the Einstein tensor of the smooth metric?  We focus on the case where the metrics have positive signature for simplicity, but it is conceivable that one could generalize our analysis to metrics with indefinite signature.

Our main result, Theorem~\ref{thm:conv}, states that if $g$ is a smooth Riemannian metric on $\Omega$ and if $g_h$ is a Regge metric on $\mathcal{T}$ that interpolates $g$ and is piecewise polynomial of degree $r \ge 1$, then the Einstein curvature of $g_h$ (as defined in Definition~\ref{def:distein}) differs from that of $g$ by $O(h^{r+1})$ in the $H^{-2}(\Omega)$-norm.  Here $h$ denotes the maximum diameter of the $N$-simplices in $\mathcal{T}$.  Our numerical experiments indicate that this estimate is sharp.  In particular, the $H^{-2}(\Omega)$-error in the Einstein tensor is generally $O(1)$ when $r=0$, not $O(h)$.  These results are consistent with the results in~\cite{gawlik2023scalar}, where analogous error estimates were proved for the scalar curvature.

Below we discuss our definition of the Einstein tensor for Regge metrics and our strategy for proving error estimates.  To simplify the discussion, we consider the case where $\Omega$ is a compact domain with no boundary for the moment. Concretely, we suppose $\Omega$ is an $N$-dimensional cube with opposing faces identified.  The treatment of bounded domains will begin in Section~\ref{sec:curvature}.

\paragraph{Defining the Einstein tensor for Regge metrics.}

The question of how to define the Einstein tensor for Regge metrics is nontrivial because Regge metrics are not classically differentiable.  
Thus, the standard coordinate formulas for $\Ric$ and $R$ involving zeroth, first, and second derivatives of the metric are inapplicable.  Nevertheless, prior work suggests that Regge metrics do admit a natural notion of scalar curvature in a distributional sense~\cite{berchenko2022finite,strichartz2020defining,gawlik2023scalar}.  Given a Regge metric $g$ with volume form $\omega$, one defines the \emph{distributional (densitized) scalar curvature} of $g$ to be a linear functional $(R\omega)_{\rm dist}(g)$ whose action on any scalar field $v$ (possessing suitable regularity) is defined by
\begin{equation} \label{distcurv_intro}
	\llangle (R\omega)_{\rm dist}(g), v \rrangle = \sum_{T \in \mathcal{T}^N} \int_T R_T v\, \omega_T + 2 \sum_{F \in \mathcal{T}^{N-1}} \int_F \llbracket H \rrbracket_F v\, \omega_F + 2 \sum_{S \in \mathcal{T}^{N-2}} \int_S \Theta_S v \,\omega_S.
\end{equation}
In this formula, $\mathcal{T}^k$ denotes the set of all $k$-simplices in $\mathcal{T}$, $\omega_D$ denotes the induced volume form on a simplex $D$, $R_T$ denotes the (classically defined) scalar curvature of $g|_T$, $\llbracket H \rrbracket_F$ denotes the jump in the mean curvature $H$ across $F$, and $\Theta_S$ denotes the \emph{angle defect} along $S$: $2\pi$ minus the sum of the dihedral angles between pairs of adjacent $(N-1)$-simplices emanating from $S$, all measured with respect to $g$.  This definition generalizes the definition $\llangle (R\omega)_{\rm dist}(g), v \rrangle = 2 \sum_{S \in \mathcal{T}^{N-2}} \int_S \Theta_S v \omega_S$ posited for piecewise constant Regge metrics in~\cite{regge1961general,cheeger1984curvature,christiansen2013exact}.  (The factor of two reflects the fact that the angle defect approximates Gaussian curvature in two dimensions, which is half the scalar curvature $R$.)

It was observed in~\cite{gawlik2023scalar} that by taking variations of~\eqref{distcurv_intro}, one is led to a natural way of defining the Einstein tensor for Regge metrics.   Namely, consider a family of Regge metrics $g(t)$ with time derivative $\sigma = \frac{\partial}{\partial t} g$.  In~\cite[Theorem 3.6]{gawlik2023scalar} we showed that
\begin{equation} \label{distcurvdot}
	\frac{d}{dt} \llangle (R\omega)_{\rm dist}(g(t)), v \rrangle = b_h(g;\sigma,v) - a_h(g;\sigma,v)
\end{equation}
for all $v$, where $b_h(g;\cdot,\cdot)$ and $a_h(g;\cdot,\cdot)$ are certain mesh-dependent bilinear forms, the first of which is manifestly a distributional version of $\int_\Omega (\dv\dv\mathbb{S} \sigma) v \, \omega$. Here $\mathbb{S}\sigma = \sigma - g\Tr\sigma$, $\dv$ denotes the covariant divergence operator, and $\Tr\sigma$ is the trace of $\sigma$ with respect to $g$.  What stands out about~\eqref{distcurvdot} is its resemblance to the corresponding formula that holds for a family of smooth Riemannian metrics $g(t)$ with time derivative $\sigma = \frac{\partial}{\partial t} g$:
\begin{equation} \label{curvdot}
	\frac{d}{dt} \int_\Omega R v \,\omega =  \int_\Omega (\dv\dv\mathbb{S} \sigma) v \, \omega - \int_\Omega \langle G, \sigma \rangle v\, \omega.
\end{equation}
Here, $\langle \cdot, \cdot \rangle$ denotes the $g$-inner product of tensor fields.

Inspired by the correspondence between~\eqref{distcurvdot} and~\eqref{curvdot}, we posited the following definition in~\cite[Section 3.2]{gawlik2023scalar}.  Given a Regge metric $g$, the \emph{distributional (densitized) Einstein tensor} associated with $g$ is the linear functional $(G\omega)_{\rm dist}(g)$ whose action on any symmetric $(0,2)$-tensor field $\sigma$ (possessing suitable regularity) is defined by
\[
\llangle (G\omega)_{\rm dist}(g), \sigma \rrangle = a_h(g;\sigma,1) = -\left.\frac{d}{dt}\right|_{t=0} \llangle (R\omega)_{\rm dist}(g+t\sigma), 1 \rrangle.
\]
Equivalently, upon recalling the expression for $a_h$ derived in~\cite{gawlik2023scalar}, this definition reads 
\begin{equation} \label{distein}
	\llangle (G\omega)_{\rm dist}(g), \sigma \rrangle = \sum_{T \in \mathcal{T}^N} \int_T \langle G_T, \sigma \rangle\, \omega_T + \sum_{F \in \mathcal{T}^{N-1}} \int_F \langle \llbracket \overline{\sff} \rrbracket_F, \sigma|_F \rangle\, \omega_F - \sum_{S \in \mathcal{T}^{N-2}} \int_S \langle \Theta_S g|_S, \sigma|_S \rangle \,\omega_S.
\end{equation}
Here, $G_T$ denotes the (classically defined) Einstein tensor associated with $g|_T$, $\cdot|_D$ denotes the pullback under the inclusion $D \hookrightarrow \Omega$ of a simplex $D$, and $\llbracket \overline{\sff}\rrbracket_F$ denotes the jump in the \emph{trace-reversed second fundamental form} $\overline{\sff}$ across $F$; see Sections~\ref{sec:evolution} and \ref{sec:curvature} for more notational details.

The appearance of $\llbracket \overline{\sff} \rrbracket_F$ in~\eqref{distein} is noteworthy.  Precisely the same quantity arises in general relativity when treating stress-energy sources that are concentrated on a spacetime hypersurface $F$.  The interface conditions that arise in that setting relate $\llbracket \overline{\sff} \rrbracket_F$ to the distribution of stress-energy on $F$ and are known in the physics literature as the \emph{Israel junction conditions}~\cite{israel1966singular}.

\paragraph{Strategy behind the analysis.}

Our strategy for proving error estimates for the Einstein tensor mirrors the strategy we used in~\cite{gawlik2023scalar} to prove error estimates for the scalar curvature.  Namely, given a piecewise polynomial Regge metric $g_h$ that approximates a smooth Riemannian metric $g$, we consider an evolving metric $\widetilde{g}(t) = (1-t)g+tg_h$ with time derivative $\sigma = \frac{\partial}{\partial t} \widetilde{g} = g_h-g$ and derive an integral formula for the error which reads
\[
\llangle (G\omega)_{\rm dist}(g_h) - (G\omega)(g), \rho \rrangle = \int_0^1 \left( B_h(\widetilde{g}(t); \sigma, \rho) + A_h(\widetilde{g}(t); \sigma, \rho) \right) \, dt.
\]
Here $\rho$ is an arbitrary symmetric $(0,2)$-tensor field possessing sufficient regularity, and $B_h(\widetilde{g};\cdot,\cdot)$ and $A_h(\widetilde{g};\cdot,\cdot)$ are certain mesh-dependent bilinear forms.  We then prove upper bounds for $B_h$ and $A_h$, yielding an error estimate for $(G\omega)_{\rm dist}(g_h) - (G\omega)(g)$ in a negative-order Sobolev norm.

\paragraph{Other comments.}
It is worth commenting on the bilinear forms $B_h(\widetilde{g};\cdot,\cdot)$ and $A_h(\widetilde{g};\cdot,\cdot)$, as they are interesting in their own right.  Their sum is the second variation of $-\llangle (R\omega)_{\rm dist}(g), 1 \rrangle$ around the metric $g=\widetilde{g}$, and, consequently, $B_h(\widetilde{g};\cdot,\cdot) + A_h(\widetilde{g};\cdot,\cdot)$ is a symmetric bilinear form.  When $\widetilde{g}$ is the Euclidean metric, $A_h(\widetilde{g};\cdot,\cdot)$ vanishes, and $B_h(\widetilde{g};\sigma,\rho)$ coincides with $\llangle \ein_{\rm dist} \sigma, \rho \rrangle$ for all smooth $\rho$ with compact support.  Here, $\ein$ denotes the second-order, linear differential operator obtained from linearizing the map $g \mapsto G(g)$ around the Euclidean metric,\footnote{We also use $\ein$ more generally to denote (the principal part of) the linearization of $g \mapsto G(g)$ around a non-Euclidean metric in the sequel; see~\eqref{linein} and~\eqref{Gdot_general}.} and $\ein_{\rm dist} \sigma$ denotes the action of $\ein$, interpreted in a distributional sense, on a piecewise smooth symmetric $(0,2)$-tensor field $\sigma$ possessing tangential-tangential continuity.  This correspondence between $\ein_{\rm dist}$ and the second variation of $-\llangle (R\omega)_{\rm dist}(g), 1 \rrangle$ around the Euclidean metric was first discovered for piecewise constant $\sigma$ by Christiansen~\cite{christiansen2011linearization}.

There is a sense in which the bilinear forms $a_h$ and $b_h$ in~\eqref{curvdot}, as well as the distributional scalar curvature $(R\omega)_{\rm dist}$, are ``traces'' of $A_h$, $B_h$, and $(G\omega)_{\rm dist}$.  More precisely, for all Regge metrics $g$, all symmetric $(0,2)$-tensor fields $\sigma$ possessing tangential-tangential continuity, and all smooth functions $v$,
\begin{align*}
	\llangle (G\omega)_{\rm dist}(g), vg \rrangle &= -\left(\frac{N-2}{2}\right) \llangle (R\omega)_{\rm dist}(g), v \rrangle, \\
	B_h(g; \sigma, vg) &= -\left( \frac{N-2}{2}\right) b_h(g; \sigma,v), \\
	A_h(g; \sigma, vg) &= \left( \frac{N-4}{2} \right) a_h(g; \sigma, v).
\end{align*} 
See Section~\ref{sec:rel_scalar_curv} for details.  A consequence of this correspondence is that many of our calculations and results below recover ones in~\cite{gawlik2023scalar} upon ``taking traces''.

\paragraph{Structure of the paper.} In Section~\ref{sec:evolution} we discuss the evolution of geometric quantities such as the Einstein tensor and the trace-reversed second fundamental form under a changing metric. The distributional densitized Einstein tensor and its evolution are presented in Section~\ref{sec:curvature}. There we also discuss connections with the distributional densitized scalar curvature and the distributional covariant linearized Einstein operator. In Section~\ref{sec:convergence} the analysis of the distributional Einstein tensor is performed by estimating the bilinear forms that characterize its evolution. A numerical example is presented in Section~\ref{sec:numerical} showing that the derived error estimates are sharp.

\section{Evolution of geometric quantities} \label{sec:evolution}

In this section, we study the evolution of various geometric quantities under metric deformations. 

We adopt the following notation.  Let $M$ be an $N$-dimensional manifold equipped with a smooth Riemannian metric $g$.  The Levi-Civita connection associated with $g$ is denoted $\nabla$.  If $\sigma$ is a $(p,q)$-tensor field, then its covariant derivative is the $(p,q+1)$-tensor field $\nabla\sigma$, and its covariant derivative in the direction of a vector field $X$ is the $(p,q)$-tensor field $\nabla_X \sigma$.  Its trace $\Tr\sigma$ is the contraction of $\sigma$ along the first two indices, using $g$ to raise or lower indices as needed.  We denote $\dv\sigma = \Tr\nabla\sigma$ and $\Delta \sigma = \dv \nabla \sigma$.  The pointwise $g$-inner product of two $(p,q)$-tensor fields $\sigma$ and $\rho$ is denoted $\langle \sigma, \rho \rangle$.  For vector fields $X$ and $Y$, we often write $g(X,Y)$ instead of $\langle X, Y \rangle$. When we wish to emphasize the dependence of $\nabla$, $\nabla_X$, $\dv$, $\langle \cdot, \cdot \rangle$, etc.~on $g$, we write $\nabla_g$, $\nabla_{g,X}$, $\dv_g$, $\langle \cdot, \cdot \rangle_g$, etc.

The volume form associated with $g$ is denoted $\omega$.  The Riemann curvature tensor, Einstein tensor, Ricci tensor, and scalar curvature of $g$ are denoted $\Riem$, $G$, $\Ric$, and $R$, respectively.  When we wish to emphasize their dependence on $g$, we write $\omega(g)$, $\Riem(g)$, $G(g)$, $\Ric(g)$, and $R(g)$.

If $D$ is an embedded submanifold of $M$, then we denote by $\omega_D$ the induced volume form on $D$.  If $\sigma$ is a tensor field on $M$, then $\left.\sigma\right|_D$ denotes the pullback of $\sigma$ under the inclusion $D \hookrightarrow M$.  Later we will introduce some additional notation related to embedded submanifolds of codimension 1, like the mean curvature $H$, second fundamental form $\sff$, and trace-reversed second fundamental form $\overline{\sff}$; see Section~\ref{sec:evolution_fundamentalform}.

We denote the exterior derivative of a differential form $\alpha$ by $d\alpha$.  If $\alpha$ is a one-form, then $\alpha^\sharp$ denotes the vector field obtained by raising indices with $g$.   If $X$ is a vector field, then $X^\flat$ denotes the one-form $g(X,\cdot)$ obtained by lowering indices with $g$.  If $f$ is a scalar field, then we sometimes interpret the one-form $\nabla f=df$ as the vector field $(df)^\sharp$ without explicitly writing it.  The symmetric covariant derivative of a one-form $\alpha$ is denoted $\df \alpha$.  That is, $(\df \alpha)(X,Y) = \frac{1}{2} ( (\nabla_X \alpha)(Y) + (\nabla_Y \alpha)(X))$ for all vectors $X,Y$.

Occasionally we perform calculations in coordinates.  We adopt the Einstein summation convention throughout.  We use $R_{ijkl}$ and $R_{ij}$ to denote the components of $\Riem$ and $\Ric$, respectively.  Our convention is that 
\begin{equation} \label{Rijkl}
	g((\nabla_Y \nabla_X - \nabla_X \nabla_Y + \nabla_{[X,Y]})Z,W) = R_{ijk\ell} X^i Y^j Z^k W^\ell
\end{equation}
and 
\[
R_{ij} = R^k_{\phantom{k}ikj}.
\]

\subsection{Evolution of the densitized Einstein tensor}

First we study the evolution of the densitized Einstein tensor $G\omega$ under deformations of the metric.

\begin{proposition} \label{prop:eindot}
	Let $g(t)$ be a family of smooth Riemannian metrics with time derivative $\frac{\partial}{\partial t}g =: \sigma$.  Let $\rho$ be an arbitrary time-independent symmetric $(0,2)$-tensor field. We have
	\begin{align}
		2\frac{\partial}{\partial t}& (\langle G,\rho\rangle\,\omega) = 2\left( \langle \dot{G}, \rho \rangle + \frac{1}{2} \langle G, \rho \rangle \Tr \sigma  - 2\sigma:G:\rho \right) \omega \nonumber \\
		&=\left(\langle 2\ein \sigma,\rho\rangle + 2\sigma:\Riem:\rho + \langle\Ric,\sigma\rangle \Tr\rho + \langle\Ric,\rho\rangle\Tr\sigma + R\langle J\sigma,\rho\rangle - 2\sigma:\Ric:\rho\right) \omega, \label{eindot}
	\end{align}
	where $\sigma:\Ric:\rho=\sigma^i_{\phantom{i}j}R^j_{\phantom{j}k}\rho^k_{\phantom{k}i}$, $\sigma:\Riem:\rho=R^{ki}_{\phantom{ki}j\ell}\sigma^\ell_{\;k}\rho^j_{\phantom{j}i}$,
	\[
	J \sigma = \sigma - \frac{1}{2}g \Tr \sigma,
	\]
	and
	\begin{equation} \label{linein}
		\ein = J\df\dv J - \frac{1}{2} \Delta J.
	\end{equation}
\end{proposition}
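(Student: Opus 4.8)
The plan is to prove the two equalities of \eqref{eindot} in turn, treating the first as a direct application of the product rule and the second as the substitution of the linearized Einstein tensor followed by curvature bookkeeping. First I would establish the top line. Differentiating $\langle G,\rho\rangle\,\omega$ and using the standard variations $\frac{\partial}{\partial t}\omega = \tfrac12(\Tr\sigma)\,\omega$ and $\frac{\partial}{\partial t}g^{ij} = -\sigma^{ij}$, the volume-form factor contributes $\tfrac12\langle G,\rho\rangle\Tr\sigma\,\omega$. Writing $\langle G,\rho\rangle = G^i_{\ j}\rho^j_{\ i}$ and noting that $\rho$ is time-independent while $G$ carries two metric-dependent raised indices, each differentiated inverse metric produces a term $-\sigma:G:\rho$; here I would use that $G$, $\sigma$, and $\rho$ are symmetric (hence $g$-self-adjoint as mixed tensors), together with cyclicity of the trace, to identify both contributions with the same quantity. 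This gives $\frac{\partial}{\partial t}\langle G,\rho\rangle = \langle\dot G,\rho\rangle - 2\sigma:G:\rho$ and hence the first equality after multiplying by $2$.

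For the second equality I would compute $\dot G$ from $G=\Ric-\tfrac12 Rg$, so that $\dot G = \dot{\Ric} - \tfrac12\dot R\,g - \tfrac12 R\sigma$, using the classical linearizations
\[
\dot R_{ij} = \tfrac12\big(\nabla^l\nabla_i\sigma_{lj}+\nabla^l\nabla_j\sigma_{li}-\Delta\sigma_{ij}-\nabla_i\nabla_j\Tr\sigma\big), \qquad \dot R = -\Delta\Tr\sigma + \dv\dv\sigma - \langle\Ric,\sigma\rangle.
\]
Commuting $\nabla^l\nabla_i$ into $\nabla_i\nabla^l = \nabla_i\dv$ via the Ricci identity splits $\dot{\Ric}$ into a principal second-order part $\df\dv\sigma - \tfrac12\Delta\sigma - \tfrac12\nabla^2\Tr\sigma$ and a curvature remainder $C^{\Ric}$. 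I would then identify the principal part of $\dot G$ with $\ein\sigma$: expanding \eqref{linein} using $\Tr(J\sigma)=\tfrac{2-N}{2}\Tr\sigma$, $\dv(J\sigma)=\dv\sigma-\tfrac12\nabla\Tr\sigma$, $\Tr\df=\dv$, and $\df\,\nabla f = \nabla^2 f$ yields $\ein\sigma = \df\dv\sigma - \tfrac12\Delta\sigma - \tfrac12\nabla^2\Tr\sigma + \tfrac12(\Delta\Tr\sigma-\dv\dv\sigma)g$, which matches the principal part of $\dot{\Ric}-\tfrac12\dot R\,g$ term by term (the $-\tfrac12 R\sigma$ piece being of lower order).

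It then remains to collect the curvature terms and contract with $\rho$. Tracing the commutator shows that the Riemann contributions to $\dot R$ cancel, leaving exactly $-\langle\Ric,\sigma\rangle$ (a useful consistency check against the scalar formula above), so that $C^G = C^{\Ric} + \tfrac12\langle\Ric,\sigma\rangle\,g - \tfrac12 R\sigma$. Contracting $C^{\Ric}$ with $\rho$, the Ricci pieces assemble into $\sigma:\Ric:\rho$ while the commutator pieces assemble into $\sigma:\Riem:\rho$. Substituting $\langle\dot G,\rho\rangle = \langle\ein\sigma,\rho\rangle + \langle C^G,\rho\rangle$ into $2\big(\langle\dot G,\rho\rangle+\tfrac12\langle G,\rho\rangle\Tr\sigma-2\sigma:G:\rho\big)$, expanding $G=\Ric-\tfrac12 Rg$ and $\sigma:G:\rho=\sigma:\Ric:\rho-\tfrac12 R\langle\sigma,\rho\rangle$, and recognizing $R\langle J\sigma,\rho\rangle = R\langle\sigma,\rho\rangle - \tfrac12 R\,\Tr\sigma\,\Tr\rho$, the terms collect into the right-hand side of \eqref{eindot}.

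The main obstacle is this final curvature bookkeeping: verifying that the contracted commutator term equals precisely $\sigma:\Riem:\rho$ with the correct coefficient and sign. This requires careful use of the Ricci identity for a $(0,2)$-tensor together with the symmetries and first Bianchi identity of $\Riem$ under the sign convention \eqref{Rijkl}, as well as the symmetry of $\rho$ to merge the two symmetrized contributions. The factor of $2$ multiplying $\sigma:\Riem:\rho$ in \eqref{eindot} then arises from the overall factor of $2$ in $2\frac{\partial}{\partial t}$, since $\langle C^{\Ric},\rho\rangle$ carries only a single copy of $\sigma:\Riem:\rho$.
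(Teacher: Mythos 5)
Your proposal is correct, and it reaches \eqref{eindot} by a route whose skeleton matches the paper's but whose middle section is organized differently. The shared skeleton: the first equality is handled by the product rule exactly as in the paper (volume form contributing $\tfrac12\langle G,\rho\rangle\Tr\sigma$, the two differentiated inverse metrics contributing $-2\sigma:G:\rho$); both arguments start from the classical linearizations of $\Ric$ and $R$ (your displayed formulas are the commutator-expanded form of Chow's equations (2.30)--(2.31), which the paper cites); and the closing algebra---expanding $G=\Ric-\tfrac12 Rg$, $\sigma:G:\rho=\sigma:\Ric:\rho-\tfrac12 R\langle\sigma,\rho\rangle$, and recognizing $R\langle J\sigma,\rho\rangle=R\langle\sigma,\rho\rangle-\tfrac12 R\Tr\sigma\Tr\rho$---is identical. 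Where you genuinely diverge is in how $\ein$ is identified. The paper never splits off a ``principal part'': it keeps the Lichnerowicz Laplacian intact, rewrites $\dot{\Ric}=-\tfrac12(\Delta_L\sigma-2\df\dv J\sigma)$, and rests the identification on the commutation lemma $J\Delta_L=\Delta_LJ=\Delta_L-\tfrac12 g\Tr\Delta$ (Lemma~\ref{lem:lichn_lapl_J}), so that $2\dot G=2J\df\dv J\sigma-\Delta_LJ\sigma+\langle\Ric,\sigma\rangle g-R\sigma$; the Riemann and Ricci terms of \eqref{eindot} only emerge at the very end when $\Delta_LJ\sigma$ is compared with $\Delta J\sigma$. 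You instead expand $\ein\sigma=\df\dv\sigma-\tfrac12\Delta\sigma-\tfrac12\nabla^2\Tr\sigma+\tfrac12(\Delta\Tr\sigma-\dv\dv\sigma)g$ directly, match it term by term against the second-order part of $\dot{\Ric}-\tfrac12\dot Rg$, and extract the curvature remainder $C^{\Ric}$ immediately via the Ricci identity. The trade-off is real but modest: the paper's version stays in covariant operator form throughout and isolates $J\Delta_L=\Delta_LJ$ as a clean, reusable fact, whereas yours is more self-contained (no Lichnerowicz Laplacian needed) at the cost of pushing the sign-sensitive work into the commutator computation---which you correctly flag as the one step needing care, and which does come out right under convention \eqref{Rijkl}: one finds $C^{\Ric}_{ij}=R^k_{\;ij\ell}\sigma^\ell_{\;k}+\tfrac12\bigl(R_{ik}\sigma^k_{\;j}+R_{jk}\sigma^k_{\;i}\bigr)$, hence $\langle C^{\Ric},\rho\rangle=\sigma:\Riem:\rho+\sigma:\Ric:\rho$, exactly the single copies your final accounting requires, with the factor $2$ in \eqref{eindot} supplied by the overall $2\frac{\partial}{\partial t}$ as you say.
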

\begin{remark}
	We will refer to the operator $\ein$ as the \emph{covariant linearized Einstein operator}, or simply the linearized Einstein operator.  It is a second-order, linear differential operator that sends symmetric $(0,2)$-tensor fields to symmetric $(0,2)$-tensor fields.  It is self-adjoint with respect to the $L^2(\Omega,g)$-inner product.  One can verify this by first checking that $J$ commutes with $\Delta$ and is self-adjoint in a pointwise sense: $\langle J \sigma, \rho \rangle = \langle \sigma, J \rho \rangle$ for all $\sigma$ and $\rho$.
\end{remark}

We will prove Proposition~\ref{prop:eindot} after first studying the evolution of $G$.  For a time-dependent metric $g(t)$ with time derivative $\sigma = \dot{g}$, we have for the evolution of the Einstein tensor
\begin{align*}
	\dot{G} 
	&= \dot{\Ric} - \frac{1}{2} \dot{R} g - \frac{1}{2} R \sigma.
\end{align*}
Equation (2.31) from~\cite{chow2006hamilton} tells us that
\[
\dot{\Ric} = -\frac{1}{2}( \Delta_L \sigma + \nabla \nabla \Tr \sigma - 2\df\dv\sigma),
\]
where $\Delta_L$ is the Lichnerowicz Laplacian, which is given in coordinates by
\[
(\Delta_L \sigma)_{ij} = (\Delta \sigma)_{ij} - 2 R^k_{\;ij\ell} \sigma^\ell_{\;k} - R_{ik}\sigma^k_{\;j} - R_{jk} \sigma^k_{\;i}.
\]
(Note that our sign convention in~\eqref{Rijkl} differs from~\cite{chow2006hamilton}'s.)  We can use the identity $\nabla \nabla \Tr\sigma = \df \nabla \Tr\sigma = \df\dv(g\Tr\sigma)$ to write this as
\[
\dot{\Ric} = -\frac{1}{2}( \Delta_L \sigma - 2\df\dv J\sigma).
\]
Meanwhile, the identity~\cite[Equation (2.30)]{chow2006hamilton}
\[
\dot{R} = \dv\dv \sigma - \Delta \Tr \sigma - \langle \Ric, \sigma \rangle 
\]
implies that with $\mathbb{S}\sigma := \sigma-g\Tr\sigma$,
\[
\dot{R} = \dv\dv\mathbb{S}\sigma - \langle \Ric, \sigma \rangle.
\]
Thus,
\begin{align*}
	\dot{G}
	&= -\frac{1}{2}( \Delta_L \sigma - 2\df\dv J\sigma ) - \frac{1}{2} (\dv\dv\mathbb{S}\sigma - \langle \Ric, \sigma \rangle) g - \frac{1}{2} R \sigma.
\end{align*}
Now we note that since $\nabla g = 0$ and $\Tr$ commutes with covariant differentiation,
\begin{align*}
	\Tr(\df\dv J\sigma) 
	&= \dv\dv\mathbb{S}\sigma + \frac{1}{2}\dv\dv(g\Tr\sigma) = \dv\dv\mathbb{S}\sigma + \frac{1}{2}\Tr\Delta\sigma 
\end{align*}
and
\begin{align*}
	2J\df\dv J\sigma 
	&= 2\df\dv J \sigma - g\Tr(\df\dv J\sigma) = 2\df\dv J\sigma - g\dv\dv\mathbb{S}\sigma - \frac{1}{2} g \Tr\Delta\sigma.
\end{align*}
Thus,
\begin{align*}
	2\dot{G} 
	&= -\Delta_L \sigma + 2 \df \dv J \sigma - g\dv\dv\mathbb{S}\sigma + \langle \Ric, \sigma \rangle g - R\sigma \\
	&= -\Delta_L \sigma + 2 J \df \dv J \sigma + \frac{1}{2}g\Tr\Delta \sigma + \langle \Ric, \sigma \rangle g - R\sigma.
\end{align*}

\begin{lemma}\label{lem:lichn_lapl_J}
	We have
	\[
	J\Delta_L \sigma = \Delta_L J\sigma =\Delta_L \sigma - \frac{1}{2} g \Tr \Delta \sigma.
	\]
\end{lemma}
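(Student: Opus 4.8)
The plan is to split the Lichnerowicz Laplacian into its connection-Laplacian part and its zeroth-order curvature part, and to verify each of the two claimed equalities by checking that the relevant curvature contractions vanish. Write $\Delta_L \sigma = \Delta \sigma + C\sigma$, where $C$ collects the curvature terms, $(C\sigma)_{ij} = -2R^k_{\;ij\ell}\sigma^\ell_{\;k} - R_{ik}\sigma^k_{\;j} - R_{jk}\sigma^k_{\;i}$. Since $\nabla g = 0$ and $\Tr$ commutes with covariant differentiation, the connection Laplacian $\Delta$ commutes with $\Tr$ (so $\Delta \Tr\sigma = \Tr\Delta\sigma$) and with multiplication by $g$ (so $\Delta(fg) = (\Delta f)g$ for scalar $f$); in particular $J$ commutes with $\Delta$, as noted in the remark following Proposition~\ref{prop:eindot}. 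Consequently the entire lemma reduces to understanding how the curvature operator $C$ interacts with $J$, and the two equalities are handled by two separate contraction computations.

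For the first equality, observe that $J\Delta_L\sigma = \Delta_L\sigma - \frac{1}{2}g\Tr(\Delta_L\sigma)$, so it suffices to prove $\Tr(\Delta_L\sigma) = \Tr\Delta\sigma$, i.e.\ that $C\sigma$ is trace-free. Contracting the coordinate expression for $C\sigma$ with $g^{ij}$ and using the antisymmetry of $\Riem$ in its first pair of indices together with the convention $R_{ij} = R^k_{\;ikj}$ from~\eqref{Rijkl}, I would establish $g^{ij}R^k_{\;ij\ell} = -R^k_{\;\ell}$. The Riemann term then contributes $2\langle \Ric, \sigma \rangle$, while the two Ricci terms contribute $-2\langle \Ric, \sigma \rangle$ (using symmetry of both $\Ric$ and $\sigma$), so $\Tr(C\sigma) = 0$ and the first identity follows.

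For the second equality, write $\Delta_L J\sigma = \Delta_L\sigma - \frac{1}{2}\Delta_L(g\Tr\sigma)$ and compute $\Delta_L(fg)$ for the scalar $f = \Tr\sigma$. Because $\nabla g = 0$, the connection-Laplacian part gives $\Delta(fg) = (\Delta f)g$. For the curvature part, substituting $(fg)^\ell_{\;k} = f\delta^\ell_k$ reduces each term of $C(fg)$ to a single contraction of $\Riem$ against the identity; the convention~\eqref{Rijkl} yields $R^k_{\;ijk} = -R_{ij}$, so the Riemann term produces $2fR_{ij}$ and the two Ricci terms produce $-2fR_{ij}$, whence $C(fg) = 0$. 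Thus $\Delta_L(g\Tr\sigma) = (\Delta\Tr\sigma)g = (\Tr\Delta\sigma)g$, and subtracting half of this from $\Delta_L\sigma$ gives the claimed identity.

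The only genuine obstacle is index bookkeeping: because the paper's curvature sign convention in~\eqref{Rijkl} differs from that of the source of the Lichnerowicz formula, I must track the contractions carefully to confirm that each contraction of $\Riem$ against $g$ (respectively $\delta$) reduces to $\pm\Ric$ with the correct sign, so that the curvature terms cancel in both computations. Everything else is routine given $\nabla g = 0$ and the standard symmetries of $\Riem$.
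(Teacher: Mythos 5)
Your proposal is correct and follows essentially the same route as the paper: both arguments reduce the lemma to two coordinate contractions — showing the zeroth-order curvature terms of $\Delta_L$ are trace-free (giving $\Tr\Delta_L\sigma = \Tr\Delta\sigma$ and hence the first identity) and showing they annihilate $g\Tr\sigma$ (giving $\Delta_L(g\Tr\sigma) = g\Tr\Delta\sigma$ and hence the second) — with the same sign bookkeeping for the contractions $g^{ij}R^k_{\;ij\ell} = -R^k_{\;\ell}$ and $R^k_{\;ijk} = -R_{ij}$ under the convention~\eqref{Rijkl}. Your packaging of the curvature terms as an operator $C$ with $\Tr(C\sigma)=0$ and $C(fg)=0$ is only a cosmetic reorganization of the paper's computation.
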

\begin{proof}
	We can relate the trace of $\Delta_L\sigma$ to the trace of $\Delta \sigma$ by computing
	\begin{align*}
		\Tr\Delta_L\sigma 
		&= g^{ij} (\Delta_L \sigma)_{ij} \\
		&= g^{ij} (\Delta \sigma)_{ij} - 2 g^{ij} R^k_{\;ij\ell} \sigma^\ell_{\;k} - g^{ij} R_{ik}\sigma^k_{\;j} - g^{ij} R_{jk} \sigma^k_{\;i} \\
		&= \Tr\Delta \sigma + 2 R^k_{\;\ell} \sigma^{\ell}_{\;k} - R^j_{\;k} \sigma^k_{\;j} - R^i_{\;k} \sigma^k_{\;i} \\
		&= \Tr\Delta\sigma.
	\end{align*}
	Above, we used the fact that the trace of $\Riem$ with respect to any two indices is always either $\Ric$, $-\Ric$, or zero; in particular it is $-\Ric$ when we take the trace with respect to the middle two indices.  
	It follows that
	\[
	J\Delta_L \sigma = \Delta_L \sigma - \frac{1}{2} g \Tr \Delta \sigma.
	\]
	On the other hand,
	\begin{align*}
		(\Delta_L(g\Tr\sigma))_{ij} 
		&= (\Delta(g\Tr\sigma))_{ij} - 2 R^k_{\;ij\ell} g^\ell_{\;k} \Tr\sigma - R_{ik} g^k_{\;j} \Tr\sigma - R_{jk} g^k_{\;i} \Tr\sigma \\
		&= g_{ij} \Delta \Tr\sigma +2 R_{ij} \Tr \sigma - R_{ij} \Tr \sigma - R_{ji} \Tr \sigma \\
		&= g_{ij} \Delta \Tr\sigma \\
		&= g_{ij} \Tr \Delta \sigma.
	\end{align*}
	This implies that
	\[
	\Delta_LJ \sigma = J\Delta_L \sigma.
	\]
\end{proof}

It follows from Lemma~\ref{lem:lichn_lapl_J} that with $2 \ein := 2J\df\dv J - \Delta J$, we have
\begin{align}
	2\dot{G} 
	&= 2J\df\dv J \sigma - \Delta_LJ \sigma  + \langle \Ric, \sigma \rangle g - R\sigma \nonumber \\
	&= 2\ein\sigma+2R^k_{\;ij\ell}\sigma^\ell_{\;k}+R_{ik}\sigma^{k}_{\;j}+R_{jk}\sigma^k_{\;i} + \langle \Ric, \sigma \rangle g - R\sigma. \label{Gdot_general}
\end{align}

\begin{proof}[Proof of Proposition~\ref{prop:eindot}]
	The first equality in~(\ref{eindot}) is the product rule, where the second term comes from differentiating $\omega$ and the third term comes from derivatives of $g$ when differentiating the inner product $\langle G, \rho \rangle = g^{ij} G_{jk} g^{k\ell} \rho_{\ell i}$.
	
	We compute the second equality in~(\ref{eindot}) by using \eqref{Gdot_general}, $G=\Ric-\frac{1}{2}Rg$, and $\sigma:g:\rho = \langle \sigma,\rho\rangle$ to obtain
	\begin{align*}
		&2\frac{d}{dt}  \left(\langle G, \rho \rangle\, \omega\right) \\
		&=  \left( \langle 2\ein\sigma, \rho \rangle + 2\sigma : \Riem:\rho + 2\sigma:\Ric:\rho + \langle \Ric, \sigma \rangle \Tr \rho  - R \langle \sigma, \rho \rangle + \langle G, \rho \rangle \Tr \sigma - 4\sigma:G:\rho \right) \omega \\
		&=  \big( \langle 2\ein\sigma, \rho \rangle + 2\sigma : \Riem:\rho + 2\sigma:\Ric:\rho + \langle \Ric, \sigma \rangle \Tr \rho  - R \langle \sigma, \rho \rangle + \langle \Ric, \rho \rangle \Tr \sigma-\frac{1}{2}R\Tr\sigma\Tr\rho \\&\quad\quad - 4\sigma:G:\rho \big) \omega \\
		&=  \big( \langle 2\ein\sigma, \rho \rangle + 2\sigma : \Riem:\rho + 2\sigma:\Ric:\rho + \langle \Ric, \sigma \rangle \Tr \rho  + R \langle J\sigma, \rho \rangle + \langle \Ric, \rho \rangle \Tr \sigma \\&\quad\quad - 4\sigma:\Ric:\rho \big) \omega \\
		&=  \left( \langle 2\ein\sigma, \rho \rangle + 2\sigma : \Riem:\rho + \langle \Ric, \sigma \rangle \Tr \rho + \langle \Ric, \rho \rangle \Tr \sigma  + R \langle J\sigma, \rho \rangle - 2\sigma:\Ric:\rho \right) \omega.
	\end{align*}
\end{proof}

\subsection{Evolution of the trace-reversed second fundamental form} \label{sec:evolution_fundamentalform}

Next we study the evolution of the trace-reversed second fundamental form $\overline{\sff}$ on a hypersurface $F$ with unit normal $n$. 

We use the notation
\[
\sff(X,Y) = g(\nabla_X n, Y) = -g(n, \nabla_X Y)
\]
for the second fundamental form on $F$ and $H = \Tr \sff$ for the mean curvature of $F$.  Our sign convention is such that $H$ is positive for a sphere with an outward normal vector.  The \emph{trace-reversed second fundamental form} is $\overline{\sff} = \sff - H g|_F$; that is,
\[
\overline{\sff}(X,Y) = \sff(X,Y) - H g(X,Y)
\]
for all vectors $X,Y$ tangent to $F$.  

We also let $\nabla_F$ and $\dv_F$ denote the surface gradient and surface divergence operators on $F$, which have the following meanings.  
For any $(0,q)$-tensor field $\sigma$ defined on the ambient manifold $M$, $\nabla_F\sigma$ is the $(0,q+1)$-tensor field defined at points that lie on $F$ by
\[
\nabla_F \sigma= \nabla \sigma -n^\flat\otimes \nabla_{n}\sigma.
\]
That is,
\[
(\nabla_F \sigma)(X_1,X_2,\dots,X_{q+1}) = (\nabla_{X_1} \sigma)(X_2,\dots,X_{q+1}) - g(n,X_1) (\nabla_{n}\sigma)(X_2,\dots,X_{q+1})
\]
for all vectors $X_1,X_2,\dots,X_{q+1}$ (not necessarily tangent to $F$).  Likewise, $\dv_F \sigma$ is the $(0,q-1)$-tensor field defined at points that lie on $F$ by
\[
(\dv_F \sigma)(X_1,X_2,\dots,X_{q-1}) = (\dv \sigma)(X_1,X_2,\dots,X_{q-1}) - (\nabla_{n}\sigma)(n,X_1,X_2,\dots,X_{q-1})
\]
for all vectors $X_1,X_2,\dots,X_{q-1}$ (not necessarily tangent to $F$).  Note that $(\dv_F \sigma)|_F = \Tr((\nabla_F \sigma)|_F)$.  Often we will abuse notation and write $(\nabla_F \sigma)(X_1,\dots,X_q)$ as shorthand for $(\nabla_F \sigma)(\cdot,X_1,\dots,X_q)$.  For example, when $\sigma$ is a $(0,2)$-tensor field, $(\nabla_F \sigma)(X_1,X_2)$ is shorthand for the one-form $(\nabla_F \sigma)(\cdot,X_1,X_2)$.  Similarly, $(\nabla_F \sigma)(X_1,\cdot)$ is shorthand for the $(0,2)$-tensor field $(\nabla_F \sigma)(\cdot,X_1,\cdot)$.  We do the same with the operator $\nabla$.

Recall that the surface divergence operator satisfies the identity
\begin{equation} \label{surfacestokes}
	\int_F (\dv_F \alpha)\omega_F = \int_{\partial F} \alpha(\nu_F) \omega_{\partial F} + \int_F H \alpha(n) \omega_F
\end{equation}
for any one-form $\alpha$, where $\nu_F$ is the outward unit normal to $\partial F$.

\begin{proposition} \label{prop:sffdot}
	Let $g(t)$ be a family of smooth Riemannian metrics with time derivative $\frac{\partial}{\partial t}g =: \sigma$.  Let $F$ be a time-independent hypersurface with unit normal $n$, induced volume form $\omega_F$, second fundamental form $\sff$, mean curvature $H$, and trace-reversed second fundamental form $\overline{\sff}$.  Let $\rho$ be an arbitrary time-independent symmetric $(0,2)$-tensor field.  We have
	\begin{equation} \label{codim1dot0}
		\frac{\partial}{\partial t} \left( \left\langle 2\overline{\sff},  \left.\rho\right|_F \right\rangle \omega_F \right)
		= \Big( \langle \mathbb{S}_F( (\Tr\sigma) \sff + \nabla_n\sigma - 2\nabla_F(\sigma(n,\cdot))), \rho|_F \rangle - 2 (\sigma|_F) : \overline{\sff} : (\rho|_F) \Big) \omega_F,
	\end{equation}
	where
	\begin{equation} \label{SF}
		\mathbb{S}_F\rho = \rho|_F - g|_F \Tr(\rho|_F).
	\end{equation}
\end{proposition}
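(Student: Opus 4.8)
The plan is to differentiate each of the three $g$-dependent ingredients of the left-hand side — the trace-reversed second fundamental form $\overline{\sff}$, the induced inner product implicit in $\langle\,\cdot\,,\cdot\,\rangle$, and the induced volume form $\omega_F$ — and then reassemble the pieces. Throughout I write $\gamma := g|_F$ for the (time-dependent) induced metric and use that $F$, hence $TF$, is fixed in time, so $\dot\gamma = \sigma|_F$. I will invoke the standard variation of the Levi-Civita connection,
\[
g(\dot{\nabla}_X Y, Z) = \tfrac12\big( (\nabla_X \sigma)(Y,Z) + (\nabla_Y \sigma)(X,Z) - (\nabla_Z \sigma)(X,Y) \big),
\]
the variation of the surface volume form $\dot{\omega}_F = \tfrac12 \Tr(\sigma|_F)\,\omega_F$ (trace with respect to $\gamma$), and the orthogonal decomposition of the ambient trace $\Tr\sigma = \Tr(\sigma|_F) + \sigma(n,n)$.

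First I would compute the variation of the unit normal. Differentiating the defining relations $g(n,n)=1$ and $g(n,X)=0$ (for $X \in TF$, which is $t$-independent) gives $g(\dot{n},n) = -\tfrac12\sigma(n,n)$ and $g(\dot{n},X) = -\sigma(n,X)$ for $X \in TF$; thus $\dot{n}$ has normal component $-\tfrac12\sigma(n,n)\,n$ and a tangential component $V$ fixed by $\gamma(V,X) = -\sigma(n,X)$. I then differentiate the expression $\sff(X,Y) = -g(n,\nabla_X Y)$ for tangent $X,Y$. The advantage of this formula over $\sff(X,Y) = g(\nabla_X n, Y)$ is that the normal part of $\nabla_X Y$ is exactly $-\sff(X,Y)\,n$, so the contribution of $\sigma(n,\nabla_X Y)$ and that of the tangential part $V$ of $\dot n$ combine and their purely tangential pieces cancel identically. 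What remains, after inserting the connection variation, is
\[
\dot{\sff}(X,Y) = \tfrac12 \sigma(n,n)\,\sff(X,Y) + \tfrac12 (\nabla_n \sigma)(X,Y) - \tfrac12\big( (\nabla_X \sigma)(n,Y) + (\nabla_Y \sigma)(n,X) \big).
\]

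Next I rewrite the last group through the surface gradient. Setting $\beta := \sigma(n,\cdot)$ and using the product rule $(\nabla_X\beta)(Y) = (\nabla_X\sigma)(n,Y) + \sigma(\nabla_X n, Y)$ together with $\nabla_X n = S(X)$ (the shape operator, $\gamma(S(X),Z) = \sff(X,Z)$) and $(\nabla_F \beta)(X,Y) = (\nabla_X\beta)(Y)$ for tangent $X$, I convert each $(\nabla_X\sigma)(n,Y)$ into $(\nabla_F\beta)(X,Y)$ minus a shape-operator term $\sigma(S(X),Y)$. Differentiating $H = \Tr\sff$ gives $\dot H = \Tr \dot{\sff} - \langle \sigma|_F, \sff\rangle$, whence $\dot{\overline{\sff}} = \dot{\sff} - \dot H\,\gamma - H\,\sigma|_F$. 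Finally I apply the product rule to the left-hand side: $\dot{\omega}_F$ supplies $\tfrac12\Tr(\sigma|_F)\langle 2\overline{\sff},\rho|_F\rangle$, differentiating the two inverse metrics in the inner product supplies $-4(\sigma|_F):\overline{\sff}:(\rho|_F)$, and the rest is $\langle 2\dot{\overline{\sff}}, \rho|_F\rangle$. Collecting: the shape-operator terms together with $-2H\,\sigma|_F$ produce $+2(\sigma|_F):\overline{\sff}:(\rho|_F)$, which combines with the $-4$ above to give the stated $-2(\sigma|_F):\overline{\sff}:(\rho|_F)$; the $\nabla_n\sigma$ and $\nabla_F\beta$ terms assemble into $\nabla_n\sigma - 2\nabla_F(\sigma(n,\cdot))$ (only the symmetric part survives against the symmetric $\rho|_F$); the scalar multiples of $\sff$ combine via $\sigma(n,n) + \Tr(\sigma|_F) = \Tr\sigma$ into $(\Tr\sigma)\,\sff$; and the trace terms coming from $-2\dot H\,\gamma$ and $\dot\omega_F$ reconstitute exactly the trace reversal $\mathbb{S}_F$, using the identity $\Tr\big((\Tr\sigma)\sff + \nabla_n\sigma - 2\nabla_F(\sigma(n,\cdot))\big) = 2\dot H + H\Tr(\sigma|_F)$.

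I expect the bookkeeping in this last assembly to be the main obstacle: one must verify that the shape-operator terms from the surface-gradient rewriting, the $-H\,\sigma|_F$ from the trace reversal, the $\tfrac12\sigma(n,n)\sff$, and the inner-product metric variations recombine precisely into the single $\overline{\sff}$-contraction with all factors of two accounted for, and that the $\dot H$ and $\dot\omega_F$ traces collapse into $\mathbb{S}_F$. A secondary subtlety is keeping the two notions of trace (with respect to $g$ versus $\gamma = g|_F$) separate, since the relation $\Tr\sigma = \Tr(\sigma|_F) + \sigma(n,n)$ is exactly what promotes the normal contribution $\sigma(n,n)\,\sff$ to the ambient trace $(\Tr\sigma)\,\sff$ appearing in the statement.
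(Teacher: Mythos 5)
Your proposal is correct and follows essentially the same route as the paper's proof: both differentiate $\sff(X,Y)=-g(n,\nabla_X Y)$ via the variation of the connection and of the normal, pass to $\dot H$ and $\dot{\overline{\sff}}$, apply the product rule to the inner product and the volume form, and use the shape-operator identity relating $(\nabla_F\sigma)(n,\cdot)$ to $\nabla_F(\sigma(n,\cdot))$ (the paper's identity \eqref{gradsigman}) to arrive at \eqref{codim1dot0}. The only differences are cosmetic: you derive $\dot n$ from first principles where the paper cites the conormal variation from \cite{gawlik2023scalar}, and you perform the shape-operator conversion before the final assembly rather than after.
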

\begin{proof}
	For any pair of time-independent vector fields $X$ and $Y$ that are tangent to $F$, we have
	\[
	\frac{\partial}{\partial t} \sff(X,Y) = \frac{\partial}{\partial t} g(\nabla_X n, Y) = - \frac{\partial}{\partial t} g(n,\nabla_X Y).
	\]
	Lemmas 2.4 and 2.5 from~\cite{gawlik2023scalar} tell us that
	\[
	\frac{\partial}{\partial t} g(n,\cdot) = \frac{1}{2}\sigma(n,n)g(n,\cdot)
	\]
	and
	\[
	\frac{\partial}{\partial t} \nabla_X Y = \frac{1}{2} \left( (\nabla_X \sigma)(Y,\cdot) + (\nabla_Y \sigma)(X,\cdot) - (\nabla \sigma)(X,Y) \right)^\sharp.
	\]
	Thus,
	\begin{align*}
		2 \frac{\partial}{\partial t} \sff(X,Y) &= -\sigma(n,n)g(n,\nabla_X Y) - (\nabla_X \sigma)(Y,n) - (\nabla_Y \sigma)(X,n) + (\nabla_n \sigma)(X,Y) \\
		&= \sigma(n,n) \sff(X,Y) - (\nabla_X \sigma)(Y,n) - (\nabla_Y \sigma)(X,n) + (\nabla_n \sigma)(X,Y).
	\end{align*}
	If we denote $(\sym \rho)(X,Y) := \frac{1}{2} \left( \rho(X,Y) + \rho(Y,X) \right)$ and use dots to denote time derivatives, then we can write the above equation as
	\begin{equation} \label{sffdot}
		2\dot{\sff} = \sigma(n,n)\sff + (\nabla_n \sigma)|_F - 2 \sym \left( (\nabla_F \sigma)(n,\cdot)|_F \right).
	\end{equation}
	Taking inner products with the induced metric $g|_F$, we obtain
	\begin{align*}
		2\dot{H} 
		&= 2\frac{\partial}{\partial t} \langle \sff, g|_F \rangle = 2\langle \dot{\sff}, g|_F \rangle - 2\langle \sff, \sigma|_F \rangle = \Tr\left( \sigma(n,n)\sff + (\nabla_n \sigma - 2(\nabla_F \sigma)(n,\cdot) )|_F \right) - 2\langle \sff, \sigma|_F \rangle.
	\end{align*}
	The minus sign in the second equality above follows from the fact that in coordinates, $\langle \sff, g|_F \rangle = \sff_{ij} (g|_F)^{ij}$, where $(g|_F)^{ij}$ are the components of $(g|_F)^{-1}$.  It follows that the trace-reversed second fundamental form $\overline{\sff} = \sff - Hg|_F$ satisfies
	\begin{align*}
		2\dot{\overline{\sff}} 
		&= 2\dot{\sff} - 2\dot{H} g|_F - 2H \sigma|_F = \mathbb{S}_F (\sigma(n,n)\sff + \nabla_n \sigma - 2\sym( (\nabla_F \sigma)(n,\cdot) ) ) + 2\langle \sff, \sigma|_F \rangle g|_F - 2H\sigma|_F.
	\end{align*}
	Now we are ready to compute the time derivative of $\left\langle 2\overline{\sff},  \left.\rho\right|_F \right\rangle \omega_F = 2 (g|_F)^{ij} \overline{\sff}_{jk} (g|_F)^{kl} \rho_{li}\, \omega_F$.  Using the fact that $\dot{\omega}_F = \frac{1}{2}\Tr(\sigma|_F)\, \omega_F$, we obtain
	\begin{align}
		\frac{\partial}{\partial t} \left( \left\langle 2\overline{\sff},  \left.\rho\right|_F \right\rangle\, \omega_F \right)
		&= \left\langle 2\dot{\overline{\sff}},  \left.\rho\right|_F \right\rangle \omega_F - 4 \left( (\sigma|_F) : \overline{\sff} : (\rho|_F) \right) \omega_F + \left\langle 2\overline{\sff},  \left.\rho\right|_F \right\rangle \dot{\omega}_F \nonumber \\
		&= \bigg( \big\langle \mathbb{S}_F (\sigma(n,n)\sff + \nabla_n \sigma - 2(\nabla_F \sigma)(n,\cdot) ), \rho|_F \big\rangle + 2\langle \sff, \sigma|_F \rangle \Tr(\rho|_F) - 2 H \langle \sigma|_F, \rho|_F \rangle \nonumber \\
		&\quad\quad - 4 (\sigma|_F) : \overline{\sff} : (\rho|_F) + \left\langle \overline{\sff},  \left.\rho\right|_F \right\rangle \Tr(\sigma|_F) \bigg) \omega_F. \label{codim1dot1}
	\end{align}
	This simplifies to~\eqref{codim1dot0} upon using the identities
	\[
	\left\langle \overline{\sff},  \left.\rho\right|_F \right\rangle \Tr(\sigma|_F) = \left\langle \mathbb{S}_F( \Tr(\sigma|_F) \sff), \rho|_F \right\rangle,
	\]
	\[
	\sigma(n,n) + \Tr(\sigma|_F) = \Tr\sigma,
	\]
	and
	\begin{align}
		\langle \mathbb{S}_F( \nabla_F(\sigma(n,\cdot)) - (\nabla_F\sigma)(n,\cdot) ), \rho|_F \rangle 
		&= \langle ( \nabla_F(\sigma(n,\cdot)) - (\nabla_F\sigma)(n,\cdot) )|_F, \mathbb{S}_F \rho \rangle \nonumber \\
		&= (\sigma|_F) : \sff : \mathbb{S}_F\rho \nonumber \\
		&= (\sigma|_F) : \sff : (\rho|_F) - \langle \sigma|_F, \sff \rangle \Tr(\rho|_F)  \label{gradsigman} \\
		&= (\sigma|_F) : \overline{\sff} : (\rho|_F) + H \langle \sigma|_F, \rho|_F \rangle - \langle \sigma|_F, \sff \rangle \Tr(\rho|_F). \nonumber
	\end{align}
\end{proof}

\section{Distributional densitized Einstein tensor} \label{sec:curvature}

In this section, we shift our focus away from smooth Riemannian metrics and consider instead a Regge metric $g$ on a simplicial triangulation $\mathcal{T}$ of a polyhedral domain $\Omega \subset \mathbb{R}^N$.  

Let us recall what this means.  Let $\mathcal{T}^k$ denote the set of all $k$-simplices in $\mathcal{T}$, and let $\mathring{\mathcal{T}}^k$ denote the subset of $\mathcal{T}^k$ consisting of $k$-simplices that are not contained in $\partial\Omega$.  We call such simplices \emph{interior simplices}.  We call $(N-1)$-simplices \emph{faces}.

A metric $g$ is called a Regge metric if $\left.g\right|_T$ is a smooth Riemannian metric on each $T \in \mathcal{T}^N$ and the induced metric $\left.g\right|_F$ is single-valued on each $F \in \mathring{\mathcal{T}}^{N-1}$ (and consequently the induced metric is single-valued on all lower-dimensional simplices in $\mathcal{T}$).

On each $T \in \mathcal{T}^N$, we denote by $G_T$ the Einstein tensor associated with $\left.g\right|_T$.  On an interior face $F \in \mathring{\mathcal{T}}^{N-1}$ that lies on the boundary of two $N$-simplices $T^+$ and $T^-$, the second fundamental form on $F$, as measured by $\left.g\right|_{T^+}$, generally differs from that measured by $\left.g\right|_{T^-}$.  We denote by $\llbracket \sff \rrbracket_F$ the jump in the second fundamental form across $F$.  More precisely,
\[
\llbracket \sff \rrbracket_F(X,Y) = \left.g\right|_{T^+}(\nabla_X n^+, Y) + \left.g\right|_{T^-}(\nabla_X n^-, Y)
\]
for any vectors $X,Y$ tangent to $F$, where $n^\pm$ points outward from $T^\pm$, has unit length with respect to $\left.g\right|_{T^\pm}$, and is $\left.g\right|_{T^\pm}$-orthogonal to $F$.  We adopt similar notation for the jumps in other quantities across $F$.  For instance, $\llbracket H \rrbracket_F$ denotes the jump in the mean curvature across $F$.  We sometimes drop the subscript $F$ when there is no danger of confusion.  If $F$ is contained in $\partial\Omega$, then we define the jump in a field $v$ across $F$ to be simply $\llbracket v \rrbracket_F = \left.v\right|_F$.

On each $S \in \mathcal{T}^{N-2}$, the \emph{angle defect} along $S$ is
\[
\Theta_S = m_S \pi - \displaystyle\sum_{\substack{T \in \mathcal{T}^N \\ T \supset S}} \theta_{ST}, \qquad m_S = 
\begin{cases}
	2, & \mbox{ if } S \in \mathring{\mathcal{T}}^{N-2}, \\
	1, & \mbox{ if } S \subset \partial\Omega, \\
\end{cases}
\]
where $\theta_{ST}$ denotes the dihedral angle formed by the two faces of $T$ that contain $S$, as measured by $\left.g\right|_T$. Generally this angle may vary along $S$.  If $F^+$ and $F^-$ are the two faces of $T$ that contain $S$, and if $n^\pm$ denotes the unit normal to $F^\pm$ with respect to $\left.g\right|_T$ pointing outward from $T$, then 
\[
\cos\theta_{ST} = -\left.g\right|_T(n^+,n^-).
\]

On a subdomain $D \subseteq \Omega$ or a simplex $D \in \mathcal{T}$, let $W^{s,p}(D)$ denote the Sobolev space of differentiability index $s \ge 0$ and integrability index $p \in [1,\infty]$.  Let $H^s(D)=W^{s,2}(D)$.  Let $H^2 S^0_2(D)$ denote the space of symmetric $(0,2)$-tensor fields on $D$ whose components belong to $H^2(D)$.  (Later we will also make use of the subspace $H^2_0 S^0_2(D)$ of symmetric $(0,2)$-tensor fields on $D$ whose components belong to $H^2_0(D)$, the closure of $C^\infty_0(D)$ in $H^2(D)$.)  We define
\begin{equation}
	\label{Sigmaspace}
	\Sigma:=\{ \text{symmetric }(0,2)\text{-tensor fields } \rho\,\vert\, \rho|_T\in H^2 S^0_2(T)\,\forall T \in \mathcal{T}^N, \, \rho|_F \text{ is single-valued } \forall F \in \mathring{\mathcal{T}}^{N-1} \}.
\end{equation}

Note that the second condition in \eqref{Sigmaspace} does not mean that all components of $\rho$ must be single-valued on $F$; only the tangential-tangential components of $\rho$ must be.

The Einstein curvature of $g$, defined below, will be thought of as an element of $\Sigma'$, the dual of $\Sigma$.  We denote the duality pairing between elements of $\Sigma'$ and elements of $\Sigma$ by $\llangle \cdot, \cdot \rrangle$.  We use this notation for other duality pairings as well; the spaces involved will be clear from the context.

\begin{definition} \label{def:distein}
	Let $g$ be a Regge metric.  The \emph{distributional densitized Einstein curvature} of $g$ is the linear functional $(G\omega)_{\rm dist}(g) \in \Sigma'$ defined by
	\begin{equation} \label{distcurv}
		\llangle (G\omega)_{\rm dist}(g), \rho \rrangle = \sum_{T \in \mathcal{T}^N} \int_T \langle G_T, \rho \rangle \,\omega_T + \sum_{F \in \mathcal{T}^{N-1}} \int_F \left\langle \llbracket \overline{\sff} \rrbracket_F, \rho|_F \right\rangle\, \omega_F - \sum_{S \in \mathcal{T}^{N-2}} \int_S \langle \Theta_S g|_S, \rho|_S \rangle\, \omega_S   
	\end{equation}
	for every $\rho \in \Sigma$.
\end{definition}

\begin{remark} \label{remark:ringsum}
	In the sequel, we will consistently use the letters $T$, $F$, and $S$ to refer to simplices of dimension $N$, $N-1$, and $N-2$, respectively.  We will therefore write $\sum_T$, $\sum_F$, and $\sum_S$ in place of $\sum_{T \in \mathcal{T}^N}$, $\sum_{F \in \mathcal{T}^{N-1}}$, and $\sum_{S \in \mathcal{T}^{N-2}}$, respectively.  When we wish to sum over \emph{interior} simplices of a given dimension, we put a ring on top of the summation symbol.  Thus, for example, $\mathring{\sum}_F$ is shorthand for $\sum_{F \in \mathring{\mathcal{T}}^{N-1}}$.
\end{remark}

The remainder of this section is structured as follows.  We study the behavior of  $(G\omega)_{\rm dist}(g)$ under deformations of $g$ in Section~\ref{sec:evolution_distein}, leading to a formula for its linearization in Proposition~\ref{prop:disteindot}.   Then, in Sections~\ref{sec:rel_scalar_curv} and~\ref{subsec:distr_cov_einop}, we make some observations that help to shed further light on Definition~\ref{def:distein} and Proposition~\ref{prop:disteindot}.

\subsection{Evolution of the distributional densitized Einstein tensor} \label{sec:evolution_distein}

The goal of this subsection is to understand how the distributional densitized Einstein tensor~(\ref{distcurv}) behaves under deformations of the metric.  To do this, let us consider a one-parameter family of Regge metrics $g(t)$ with time derivative
\[
\sigma = \frac{\partial}{\partial t}g.
\] 
We aim to compute
\[
\frac{d}{dt} \llangle (G\omega)_{\rm dist}(g(t)), \rho \rrangle
\]
with $\rho \in \Sigma$ arbitrary.  We will do this with the help of Proposition~\ref{prop:eindot}, Proposition~\ref{prop:sffdot}, and the following formula for the rate of change of the angle defect~\cite[Equation (18)]{gawlik2023scalar}:
\begin{equation} \label{angledefectdot}
	\dot{\Theta}_S = \frac{1}{2} \sum_{F \supset S} \llbracket \sigma(n,\nu) \rrbracket_F.
\end{equation}
In this formula, the sum is over all $(N-1)$-simplices $F$ that contain $S$, $n$ denotes the unit normal to $F$ (which differs on either side of $F$), and $\nu$ denotes the unit vector that is simultaneously tangent to $F$ and orthogonal to $S$ (which is single-valued on $F$), all with respect to the Regge metric $g$.  Our sign convention is as follows. If $F \in \mathring{\mathcal{T}}^{N-1}$, and if $T^+$ and $T^-$ are the two $N$-simplices containing $F$, then
\begin{equation} \label{sigmajump}
	\llbracket \sigma(n,\nu) \rrbracket_F = \sigma^+(n^+,\nu) + \sigma^-(n^-,\nu),
\end{equation}
where $\sigma^{\pm} = \sigma|_{T^\pm}$, $n^\pm$ points outward from $T^\pm$, and $\nu$ points into $F$ from $S$.  If $F \subset \partial\Omega$, then our convention is the same except for the fact that only one term is present on the right-hand side of~\eqref{sigmajump} because only one $N$-simplex contains $F$.

The formula for $\frac{d}{dt} \llangle (G\omega)_{\rm dist}(g(t)), \rho \rrangle$ that we will soon state can be regarded as a distributional version of Proposition~\ref{prop:eindot}.  The formula is linear in $\sigma$ (by the chain rule) and linear in $\rho$ (by definition), so it is a bilinear form in $\sigma$ and $\rho$ that depends on the Regge metric $g$.  In fact it is symmetric in $\sigma$ and $\rho$ because it is (minus) the second variation of the distributional densitized scalar curvature $\llangle (R\omega)_{\rm dist}(g), 1 \rrangle$; see Section~\ref{sec:rel_scalar_curv}. 
We will write this symmetric bilinear form as a sum of two symmetric bilinear forms, one of which corresponds to a distributional version of $\langle \ein \sigma, \rho \rangle$ and the other of which corresponds to the remaining terms in~\eqref{eindot}.  (See Lemmas~\ref{lemma:trAhBh} and~\ref{lem:eucl_distr_ein} for more insight into how the terms are partitioned.)

\begin{proposition} \label{prop:disteindot}
	Let $g(t)$ be a time-dependent Regge metric with time derivative $\sigma = \frac{\partial}{\partial t} g$.  Then for every $\rho \in \Sigma$,
	\[
	\frac{d}{dt} \left\llangle (G\omega)_{\rm dist}(g(t)), \rho \right\rrangle = B_h(g; \sigma, \rho) + A_h(g; \sigma, \rho),
	\]
	where
	\begin{align} 
		2B_h(g; \sigma, \rho) &= \sum_T \int_T \langle 2 \ein \sigma, \rho \rangle \,\omega_T \nonumber \\
		&\quad + \sum_F \int_F \left\langle \left\llbracket \mathbb{S}_F(\sigma(n,n)\sff + \nabla_n \sigma - (\nabla_F\sigma)(n,\cdot) - \nabla_F(\sigma(n,\cdot))) \right\rrbracket, \rho|_F \right\rangle \omega_F \label{Bhsimpler} \\
		&\quad - \sum_S \int_S \sum_{F \supset S} \llbracket \sigma(n,\nu) \rrbracket_F \Tr(\rho|_S) \nonumber \,\omega_S
	\end{align}	
	and
	\begin{align}
		2&A_h(g;\sigma,\rho) = \sum_T \int_T \bigg( 2\sigma : \Riem : \rho + \langle \Ric, \sigma \rangle \Tr \rho + \langle \Ric, \rho \rangle \Tr \sigma  + R \langle J\sigma, \rho \rangle - 2\sigma:\Ric:\rho \bigg) \omega_T \nonumber \\
		& + \sum_F \int_F \bigg( -3 (\sigma|_F) : \llbracket \overline{\sff} \rrbracket : (\rho|_F) +  \langle \llbracket \overline{\sff} \rrbracket, \sigma|_F \rangle \Tr(\rho|_F) + \Tr(\sigma|_F) \langle \llbracket \overline{\sff} \rrbracket, \rho|_F \rangle - \llbracket H \rrbracket \langle \mathbb{S}_F \sigma, \rho|_F \rangle \bigg) \omega_F \nonumber  \\
		& + \sum_S \int_S \bigg( 2\Theta_S \langle \sigma|_S, \rho|_S \rangle - \Theta_S \Tr(\sigma|_S) \Tr(\rho|_S) \bigg) \omega_S. \label{Ah}
	\end{align}
\end{proposition}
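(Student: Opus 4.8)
The plan is to differentiate the three sums in the definition~\eqref{distcurv} of $(G\omega)_{\rm dist}(g)$ term by term, using the pointwise evolution formulas already established, and then to sort the resulting terms into the two bilinear forms $B_h$ and $A_h$. Since $(G\omega)_{\rm dist}(g)$ consists of a volume integral over each $T$, a surface integral over each $F$, and a codimension-two integral over each $S$, and since each integrand evolves according to a formula already in hand (Proposition~\ref{prop:eindot} for the volume term, Proposition~\ref{prop:sffdot} for the face term, and~\eqref{angledefectdot} for the angle defect), the whole computation reduces to differentiating, collecting, and algebraically rearranging. Throughout I would work with $2\frac{d}{dt}\llangle (G\omega)_{\rm dist}(g(t)),\rho\rrangle$ to match the factors of two in~\eqref{eindot} and~\eqref{codim1dot0}, and I would repeatedly use that $\rho\in\Sigma$ together with $g$ being a Regge metric forces $\rho|_F$, $g|_F$, $\mathbb{S}_F$, and $\Tr(\sigma|_F)$ to be single-valued on each $F$, while quantities built from the unit normal $n$ (such as $\sff$, $H$, $\sigma(n,n)$, and $\nabla_n\sigma$) jump across $F$.

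The volume and codimension-two contributions are the most direct. For each $T$, Proposition~\ref{prop:eindot} gives $2\frac{\partial}{\partial t}(\langle G_T,\rho\rangle\omega_T)$ as the sum in~\eqref{eindot}; the $\langle 2\ein\sigma,\rho\rangle\,\omega_T$ piece is assigned to $2B_h$ and every remaining term to $2A_h$, reproducing the first line of each form. For each $S$, I would differentiate $-\Theta_S\Tr(\rho|_S)\omega_S$ by the product rule, using~\eqref{angledefectdot} for $\dot\Theta_S$, the identity $\frac{\partial}{\partial t}\Tr(\rho|_S)=-\langle\sigma|_S,\rho|_S\rangle$ (from differentiating the inverse induced metric on $S$), and $\dot\omega_S=\tfrac12\Tr(\sigma|_S)\omega_S$. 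The $\dot\Theta_S$ term yields exactly the $-\sum_{F\supset S}\llbracket\sigma(n,\nu)\rrbracket_F\Tr(\rho|_S)$ contribution to $2B_h$, and the other two terms yield the $\Theta_S$-contribution to $2A_h$.

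The face terms are the crux. For each $F$ I would apply Proposition~\ref{prop:sffdot} on each adjacent simplex $T^\pm$ with its outward normal $n^\pm$ and sum, so that $\sff^\pm$, $H^\pm$, and the normal quantities assemble into jumps while the single-valued factors $\sigma|_F$, $g|_F$, $\rho|_F$ factor out; this turns $\sum_\pm$ of~\eqref{codim1dot0} into $\langle\mathbb{S}_F\llbracket(\Tr\sigma)\sff+\nabla_n\sigma-2\nabla_F(\sigma(n,\cdot))\rrbracket,\rho|_F\rangle-2(\sigma|_F):\llbracket\overline{\sff}\rrbracket:(\rho|_F)$. To extract the $B_h$ piece~\eqref{Bhsimpler}, I would split $\Tr\sigma=\sigma(n,n)+\Tr(\sigma|_F)$ and rewrite $-2\nabla_F(\sigma(n,\cdot))=-\nabla_F(\sigma(n,\cdot))-[\nabla_F(\sigma(n,\cdot))-(\nabla_F\sigma)(n,\cdot)]-(\nabla_F\sigma)(n,\cdot)$, isolating inside the jump precisely $\sigma(n,n)\sff+\nabla_n\sigma-(\nabla_F\sigma)(n,\cdot)-\nabla_F(\sigma(n,\cdot))$. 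The leftover is $\Tr(\sigma|_F)\llbracket\sff\rrbracket$ together with $-\llbracket\nabla_F(\sigma(n,\cdot))-(\nabla_F\sigma)(n,\cdot)\rrbracket$, and the latter I would convert by summing the per-side identity~\eqref{gradsigman} over $\pm$. Finally I would substitute $\mathbb{S}_F\sff=\overline{\sff}$, hence $\mathbb{S}_F\llbracket\sff\rrbracket=\llbracket\overline{\sff}\rrbracket$, and $\llbracket\sff\rrbracket=\llbracket\overline{\sff}\rrbracket+\llbracket H\rrbracket\,g|_F$, and collect; together with the standalone $-2(\sigma|_F):\llbracket\overline{\sff}\rrbracket:(\rho|_F)$ and the self-adjointness identity $\langle\mathbb{S}_F A,\rho|_F\rangle=\langle A,\rho|_F\rangle-\Tr A\,\Tr(\rho|_F)$, this should reproduce exactly the face integrand of $2A_h$ in~\eqref{Ah}, where the term $-\llbracket H\rrbracket\langle\mathbb{S}_F\sigma,\rho|_F\rangle$ packages the $\llbracket H\rrbracket$ pieces.

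I expect the main obstacle to be this face-term bookkeeping: keeping straight which factors are single-valued and which jump, choosing the splitting of $-2\nabla_F(\sigma(n,\cdot))$ so that the $B_h$ integrand in~\eqref{Bhsimpler} emerges cleanly, and applying~\eqref{gradsigman} under the jump to trade tangential derivatives of the normal component for the curvature terms $(\sigma|_F):\llbracket\overline{\sff}\rrbracket:(\rho|_F)$ and $\langle\sigma|_F,\llbracket\overline{\sff}\rrbracket\rangle\Tr(\rho|_F)$ that land in $A_h$. By contrast, the volume and codimension-two computations carry essentially no such difficulty, being direct applications of the product rule and the already-established evolution formulas.
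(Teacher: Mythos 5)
Your proposal is correct and takes essentially the same approach as the paper: differentiate term by term using Proposition~\ref{prop:eindot} for the volume integrals, Proposition~\ref{prop:sffdot} for the face integrals, and~\eqref{angledefectdot} for the angle-defect terms, then sort the face contributions via~\eqref{gradsigman}, $\mathbb{S}_F \sff = \overline{\sff}$, $\llbracket \sff \rrbracket = \llbracket \overline{\sff} \rrbracket + \llbracket H \rrbracket\, g|_F$, and $\Tr\sigma = \sigma(n,n) + \Tr(\sigma|_F)$. The only difference is organizational: the paper verifies the face-term matching as a single jump identity (reducing both sides to $\llbracket H\langle \sigma|_F,\rho|_F\rangle + H\Tr(\sigma|_F)\Tr(\rho|_F)\rrbracket$), whereas you derive the split constructively starting from~\eqref{codim1dot0}; the ingredients and the resulting algebra are identical.
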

\begin{remark}
	In view of~\eqref{gradsigman}, another way of writing~(\ref{Bhsimpler}) is
	\begin{align}
		2&B_h(g; \sigma, \rho) = \sum_T \int_T \langle 2 \ein \sigma, \rho \rangle \,\omega_T + \sum_{F} \int_F \left\langle \left\llbracket \sigma(n,n)\overline{\sff} + \mathbb{S}_F(\nabla_n \sigma - 2(\nabla_F\sigma)(n,\cdot)) \right\rrbracket, \rho|_F \right\rangle \omega_F \nonumber \\
		&\quad + \sum_{F} \int_F \bigg( \langle \sigma|_F, \llbracket \sff \rrbracket \rangle \Tr(\rho|_F) -(\sigma|_F) : \llbracket \sff \rrbracket : (\rho|_F) \bigg) \omega_F - \sum_{S} \int_S \sum_{F \supset S} \llbracket \sigma(n,\nu) \rrbracket_F \Tr(\rho|_S)\, \omega_S. \label{Bh} 
	\end{align}
\end{remark}
\begin{remark}
	The bilinear form $A_h(g; \cdot,\cdot)$ is manifestly symmetric.  We argue in Section~\ref{sec:rel_scalar_curv} that $A_h(g;\cdot,\cdot) + B_h(g;\cdot,\cdot)$ is also symmetric, so $B_h(g;\cdot,\cdot)$ is symmetric as well. (One can also directly prove the symmetry of $B_h(g;\cdot,\cdot)$ using integration by parts.)
\end{remark}
\begin{proof}[Proof of Proposition~\ref{prop:disteindot}]
	We use Proposition~\ref{prop:eindot} to differentiate the integrals over codimension-0 simplices, Proposition~\ref{prop:sffdot} to differentiate the integrals over codimension-1 simplices, and~\eqref{angledefectdot} to differentiate the integrals over codimension-2 simplices.  Proposition~\ref{prop:eindot} immediately yields the codimension-0 terms in~\eqref{Bhsimpler} and~\eqref{Ah}.  For the codimension-2 terms, we use that fact that $\langle \Theta_S g|_S, \rho|_S \rangle = \Theta_S \Tr(\rho|_S)$ to compute
	\[
	\frac{\partial}{\partial t}( 2 \langle\Theta_S g|_S, \rho|_S \rangle\, \omega_S )
	= 2\dot{\Theta}_S \Tr(\rho|_S) \,\omega_S + 2\Theta_S \left( \frac{\partial}{\partial t} \Tr(\rho|_S) \right) \omega_S +2\Theta_S \Tr(\rho|_S) \,\dot{\omega}_S.
	\]
	Bearing in mind that the coordinate expression for $\Tr(\rho|_S)$ involves the inverse metric $(g|_S)^{-1}$, we get
	\[
	\frac{\partial}{\partial t} \Tr(\rho|_S) = -\langle \sigma|_S, \rho|_S \rangle.
	\]
	Together with $\dot{\omega}_S = \frac{1}{2} \Tr(\sigma|_S) \omega_S$ and~\eqref{angledefectdot}, we obtain
	\begin{align*}
		\frac{\partial}{\partial t} ( 2\langle\Theta_S g|_S, \rho|_S \rangle\, \omega_S )
		=\left( \sum_{F \supset S} \llbracket \sigma(n,\nu) \rrbracket_F \Tr(\rho|_S) - 2\Theta_S\langle \sigma|_S, \rho|_S \rangle + \Theta_S \Tr(\sigma|_S)\Tr(\rho|_S) \right) \omega_S,
	\end{align*}
	as desired.  
	The only thing left to check is that the codimension-1 terms in~\eqref{Bhsimpler} and~\eqref{Ah} match those in Proposition~\ref{prop:sffdot}.  Equivalently, we can check that the codimension-1 terms in~\eqref{Bh} and~\eqref{Ah} match those in~\eqref{codim1dot1}.  This requires us to check that
	\begin{align*}
		&\left\llbracket -4(\sigma|_F) : \overline{\sff} : (\rho|_F) + 2\langle \sff, \sigma|_F \rangle \Tr(\rho|_F) - 2H \langle \sigma|_F, \rho|_F \rangle \right\rrbracket \\
		&= \langle \sigma|_F, \llbracket \sff \rrbracket \rangle \Tr(\rho|_F) - (\sigma|_F) : \llbracket\sff\rrbracket : (\rho|_F) - 3(\sigma|_F) : \llbracket \overline{\sff}\rrbracket  : (\rho|_F) +  \langle \llbracket \overline{\sff} \rrbracket, \sigma|_F \rangle \Tr(\rho|_F) - \llbracket H \rrbracket \langle \mathbb{S}_F \sigma, \rho|_F \rangle.
	\end{align*}
	Rearranging, we must show that
	\begin{align*}
		\left\llbracket (\sigma|_F) : (\sff-\overline{\sff}) : (\rho|_F) + \langle \sff-\overline{\sff}, \sigma|_F \rangle \Tr(\rho|_F) \right\rrbracket = \left\llbracket 2H\langle \sigma|_F, \rho|_F \rangle - H \langle \mathbb{S}_F \sigma, \rho|_F \rangle \right\rrbracket.
	\end{align*}
	Since both sides equal $\llbracket H\langle \sigma|_F, \rho|_F \rangle + H \Tr(\sigma|_F)\Tr(\rho|_F) \rrbracket$, the proof is complete.
\end{proof}

\subsection{Relation to distributional densitized scalar curvature} \label{sec:rel_scalar_curv}

As discussed in Section~\ref{sec:intro}, Definition~\ref{def:distein} is motivated by a formula for the variation of the total scalar curvature under deformations of the metric.  We will elaborate on this motivation below, keeping in mind that in Section~\ref{sec:intro}, the discussion was restricted to compact domains without boundary.  Here we abandon the boundaryless assumption and consider, as above, a polyhedral domain $\Omega$ with boundary.

The following notion of scalar curvature for Regge metrics was put forth in~\cite{gawlik2023scalar}.  Let
\[
V=\{v \in H^1_0(\Omega) \mid \forall T \in \mathcal{T}^N, \left.v\right|_T \in H^2(T)\}
\]
and let $g$ be a Regge metric.  In~\cite{gawlik2023scalar} we defined the \emph{distributional densitized scalar curvature} of $g$ to be the linear functional $(R\omega)_{\rm dist}(g) \in V'$ given by
\[
\llangle (R\omega)_{\rm dist}(g),v\rrangle = \sum_{T} \int_T R_T v \,\omega_T + 2\mathring{\sum_{F}} \int_F \llbracket H \rrbracket v \,\omega_F + 2\mathring{\sum_{S}} \int_S \Theta_S v \,\omega_S, \quad \forall v \in V.
\]
In the present paper, it will be convenient to extend this definition so that we can choose $v=1$ as a test function.  Accordingly, we define
\[
\widetilde{V} =\{v \in H^1(\Omega) \mid \forall T \in \mathcal{T}^N, \left.v\right|_T \in H^2(T)\},
\]
and set
\begin{equation} \label{distcurv_bnd}
	\llangle (R\omega)_{\rm dist}(g),v\rrangle = \sum_{T} \int_T R_T v \,\omega_T + 2\sum_{F} \int_F \llbracket H \rrbracket v \,\omega_F + 2\sum_{S} \int_S \Theta_S v \,\omega_S, \quad \forall v \in \widetilde{V}.
\end{equation}
Recall that our convention is to interpret $\llbracket H \rrbracket_F$ as $H$ (the mean curvature measured by $g|_T$, where $T \supset F$) if $F \subset \partial\Omega$, and to interpret $\Theta_S$ as $\pi - \sum_{T\supset S} \theta_{ST}$ (as opposed to $2\pi - \sum_{T\supset S} \theta_{ST}$) if $S \subset \partial\Omega$.  One can check that this generalized definition of $(R\omega)_{\rm dist}(g)$ yields the Gauss-Bonnet formula $\frac{1}{2}\llangle (R\omega)_{\rm dist}(g),1\rrangle = 2\pi \chi(\Omega)$ in dimension $N=2$.  It is also consistent with classical definitions of curvature for piecewise constant $g$ on bounded domains of arbitrary dimension~\cite{cheeger1984curvature}.

In~\cite{gawlik2023scalar}, we computed the variation of $\llangle (R\omega)_{\rm dist}(g), v \rrangle$ under deformations of the metric, with $v \in V$ fixed.  A straightforward generalization of~\cite[Theorem 3.6]{gawlik2023scalar} to $v \in \widetilde{V}$ yields the formula
\[
\frac{d}{dt}\llangle (R\omega)_{\rm dist}(g(t)),v\rrangle = b_h(g;\sigma,v) - a_h(g;\sigma,v), \quad \forall v \in \widetilde{V},
\]
where $\sigma=\frac{\partial}{\partial t}g$,
\begin{align*}
	b_h(g;\sigma,v) &= \sum_T \int_T (\dv\dv\mathbb{S}\sigma)v\, \omega_T - \sum_{F} \int_F \left\llbracket (\dv\mathbb{S}\sigma)(n) + \dv_F\left(\sigma(n,\cdot) \right) - H \sigma(n,n) \right\rrbracket v\, \omega_F \\
	&\quad + \sum_{S} \int_S \sum_{F \supset S} \llbracket  \sigma(n,\nu) \rrbracket_F v\,\omega_S \\
	&= \sum_T \int_T \langle \mathbb{S}\sigma, \nabla\nabla v \rangle \,\omega_T - \sum_{F} \int_F \mathbb{S}\sigma(n,n) \llbracket \nabla_n v \rrbracket \,\omega_F,
\end{align*}
$\mathbb{S}\sigma = \sigma - g\Tr\sigma$, and
\begin{align*}
	a_h(g;\sigma,v) 
	&= \sum_T \int_T \langle G_T, \sigma \rangle\, v\, \omega_T + \sum_{F} \int_F \langle \llbracket \overline{\sff} \rrbracket, \sigma|_F \rangle\, v \,\omega_F - \sum_{S} \int_S \langle \Theta_S g|_S,  \sigma|_S \rangle v \,\omega_S.
\end{align*}
The fact that the two formulas for $b_h$ given above are equal follows from Lemma 3.4 in~\cite{gawlik2023scalar}.  In that lemma, the sums over $F$ and $S$ (in the first formula for $b_h$ above) are sums over interior codimension-1 and codimension-2 simplices, but here they are sums over all codimension-1 and codimension-2 simplices because $v|_{\partial\Omega}$ is not necessarily zero.

Since $b_h(g; \sigma,1) = 0$, we have
\[
\llangle (G\omega)_{\rm dist}(g), \sigma \rrangle = a_h(g;\sigma,1) = -\left.\frac{d}{dt}\right|_{t=0} \llangle (R\omega)_{\rm dist}(g+t\sigma), 1 \rrangle
\]
for any fixed Regge metric $g$.  It follows that
\[
B_h(g;\rho,\sigma) + A_h(g;\rho,\sigma) = \left.\frac{d}{ds}\right|_{s=0} \llangle (G\omega)_{\rm dist}(g+s \rho), \sigma \rrangle = -\left.\frac{d^2}{dsdt}\right|_{s=t=0} \llangle (R\omega)_{\rm dist}(g+s\rho+t\sigma), 1 \rrangle.
\]
That is, $B_h(g;\cdot,\cdot) + A_h(g;\cdot,\cdot)$ is the second variation of $-\llangle (R\omega)_{\rm dist}(g), 1 \rrangle$.  Thus it is a symmetric bilinear form.

\begin{remark}
	One should think of~\eqref{distcurv_bnd} as a distributional version of
	\[
	\int_\Omega R v \omega + 2 \int_{\partial \Omega} H v \omega_{\partial\Omega}.
	\]
	When $v=1$, this is the Einstein-Hilbert functional with the Gibbons-Hawking-York boundary term included \cite{gibbons1993action,york1972role}.  Its variation is
	\begin{align*}
		\frac{d}{dt} &\left( \int_\Omega R(g(t)) v \omega(g(t)) + 2 \int_{\partial \Omega} H(g(t)) v \omega_{\partial\Omega}(g(t)) \right) \\
		&= \int_\Omega (\dv\dv\mathbb{S}\sigma - \langle G, \sigma \rangle) v \omega - \int_{\partial\Omega}  \left( \langle \overline{\sff}, \sigma|_{\partial\Omega} \rangle + (\dv\mathbb{S}\sigma)(n) + \dv_{\partial\Omega}(\sigma(n,\cdot)) - H\sigma(n,n) \right) v \omega_{\partial\Omega},
	\end{align*}
	where $\sigma = \frac{\partial}{\partial t} g$.  This follows from~\eqref{curvdot} and~\cite[Proposition 2.2]{gawlik2023scalar}.  We can write this result as 
	\[
	\frac{d}{dt} \left( \int_\Omega R(g(t)) v \omega(g(t)) + 2 \int_{\partial \Omega} H(g(t)) v \omega_{\partial\Omega}(g(t)) \right) = b(g;\sigma,v) - a(g;\sigma,v),
	\]
	where
	\begin{align*}
		b(g;\sigma,v) &= \int_\Omega (\dv\dv \mathbb{S}\sigma) v \omega - \int_{\partial\Omega}  \left( (\dv\mathbb{S}\sigma)(n) + \dv_{\partial\Omega}(\sigma(n,\cdot)) - H\sigma(n,n) \right) v \omega_{\partial\Omega}, \\
		a(g;\sigma,v) &= \int_\Omega \langle G, \sigma \rangle v \omega + \int_{\partial\Omega} \langle \overline{\sff}, \sigma|_{\partial\Omega} \rangle v \omega_{\partial\Omega}.
	\end{align*}
	Notice that 
	\[
	b(g;\sigma,1) = 0
	\] 
	because $\int_\Omega (\dv\dv \mathbb{S}\sigma) \omega$ cancels with $- \int_{\partial\Omega} (\dv\mathbb{S}\sigma)(n) \omega_{\partial\Omega}$ and $\dv_{\partial\Omega}(\sigma(n,\cdot)) - H\sigma(n,n)$ integrates to zero by~\eqref{surfacestokes}.  Therefore $b$ and $a$ should be regarded as the smooth counterparts of $b_h$ and $a_h$.
\end{remark}

The following lemma shows that $(R\omega)_{\rm dist}$, $b_h$, and $a_h$ are, in a certain sense, ``traces'' of $(G\omega)_{\rm dist}$, $B_h$, and $A_h$.
\begin{lemma} \label{lemma:trAhBh}
	Let $g$ be a Regge metric.  For any $\sigma \in \Sigma$ and any $v \in \widetilde{V}$, we have
	\begin{align}
		\llangle (G\omega)_{\rm dist}(g), vg \rrangle &= -\left(\frac{N-2}{2}\right) \llangle (R\omega)_{\rm dist}(g), v \rrangle, \label{trG} \\
		B_h(g; \sigma, vg) &= B_h(g; vg, \sigma) = -\left( \frac{N-2}{2}\right) b_h(g; \sigma,v), \label{trBh} \\
		A_h(g; \sigma, vg) &= A_h(g; vg, \sigma) = \left( \frac{N-4}{2} \right) a_h(g; \sigma, v) \label{trAh}.
	\end{align}
\end{lemma}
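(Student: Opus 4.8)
The plan is to prove the three identities in the order \eqref{trG}, then \eqref{trAh}, and finally \eqref{trBh}, deducing the last from the first two by differentiation so as to sidestep a painful direct manipulation of the codimension-one part of $B_h$. The symmetry statements $B_h(g;\sigma,vg)=B_h(g;vg,\sigma)$ and $A_h(g;\sigma,vg)=A_h(g;vg,\sigma)$ are immediate from the symmetry of $B_h$ and $A_h$ recorded after Proposition~\ref{prop:disteindot}, so I would concentrate on computing the values.

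For \eqref{trG} I would substitute $\rho=vg$ into the defining formula \eqref{distcurv} and contract term by term, using $(vg)|_F=v\,g|_F$ and $(vg)|_S=v\,g|_S$. The three ingredients are the traces
\begin{align*}
\Tr G &= R - \tfrac{N}{2}R = -\tfrac{N-2}{2}R, \\
\Tr\overline{\sff} &= \Tr\sff - (N-1)H = -(N-2)H, \\
\Tr(g|_S) &= N-2,
\end{align*}
where I used $\Tr\sff=H$, $\dim F=N-1$, and $\dim S=N-2$. Feeding these back into \eqref{distcurv} turns its three terms into exactly $-\tfrac{N-2}{2}$ times the three terms of $(R\omega)_{\rm dist}(g)$ in \eqref{distcurv_bnd}, which is \eqref{trG}.

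For \eqref{trAh} I would substitute $\rho=vg$ into \eqref{Ah}, which is purely algebraic in $\sigma$ and $\rho$ (no derivatives act on $\rho$), and treat the three codimensions separately. The needed contractions are $\Tr(vg)=Nv$, $\langle\Ric,vg\rangle=vR$, $\langle J\sigma,vg\rangle=-\tfrac{N-2}{2}v\Tr\sigma$, $\sigma:\Ric:(vg)=v\langle\Ric,\sigma\rangle$, and, crucially, $\sigma:\Riem:(vg)=-v\langle\Ric,\sigma\rangle$, the last of which follows from the fact (already used in Lemma~\ref{lem:lichn_lapl_J}) that the trace of $\Riem$ on its middle two indices is $-\Ric$. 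After collecting terms, the codimension-zero integrand collapses to $(N-4)v\bigl(\langle\Ric,\sigma\rangle-\tfrac12 R\Tr\sigma\bigr)=(N-4)v\langle G_T,\sigma\rangle$. For the codimension-one terms I would use $\langle\llbracket\overline{\sff}\rrbracket,vg|_F\rangle=-(N-2)v\llbracket H\rrbracket$ and $\langle\mathbb{S}_F\sigma,vg|_F\rangle=-(N-2)v\Tr(\sigma|_F)$; the two resulting $\Tr(\sigma|_F)\llbracket H\rrbracket$ contributions cancel, leaving $(N-4)v\langle\llbracket\overline{\sff}\rrbracket,\sigma|_F\rangle$. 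For the codimension-two terms, $\langle\sigma|_S,vg|_S\rangle=v\Tr(\sigma|_S)$ and $\Tr(vg|_S)=(N-2)v$ combine to give $-(N-4)v\,\Theta_S\Tr(\sigma|_S)$. Comparing these three with the corresponding terms of $a_h$ yields $2A_h(g;\sigma,vg)=(N-4)a_h(g;\sigma,v)$, i.e.\ \eqref{trAh}.

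For \eqref{trBh} I would avoid tracing the surface-gradient terms in \eqref{Bhsimpler} directly and instead differentiate \eqref{trG}. Taking $g(t)=g+t\sigma$ with $\sigma\in\Sigma$ and $v\in\widetilde V$ fixed, and writing $\rho(t)=v\,g(t)$ so that $\dot\rho=v\sigma$, the chain rule together with Proposition~\ref{prop:disteindot} and linearity of the pairing in its second slot gives
\[
\left.\frac{d}{dt}\right|_{t=0}\!\llangle (G\omega)_{\rm dist}(g(t)), v g(t)\rrangle = B_h(g;\sigma,vg)+A_h(g;\sigma,vg)+\llangle (G\omega)_{\rm dist}(g), v\sigma\rrangle,
\]
and the last pairing equals $a_h(g;v\sigma,1)=a_h(g;\sigma,v)$. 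Differentiating the right-hand side of \eqref{trG} and invoking $\tfrac{d}{dt}\llangle(R\omega)_{\rm dist}(g(t)),v\rrangle=b_h-a_h$ produces $-\tfrac{N-2}{2}(b_h-a_h)$. Equating the two expressions and substituting the already-proved \eqref{trAh} isolates $B_h(g;\sigma,vg)=-\tfrac{N-2}{2}b_h(g;\sigma,v)$, which is \eqref{trBh}. The main obstacle is precisely the codimension-one bookkeeping: in \eqref{trG} and \eqref{trAh} only algebraic contractions and the cancellation of the $\Tr(\sigma|_F)\llbracket H\rrbracket$ terms are involved, whereas a \emph{direct} proof of \eqref{trBh} would require tracing $\mathbb{S}_F(\sigma(n,n)\sff+\nabla_n\sigma-(\nabla_F\sigma)(n,\cdot)-\nabla_F(\sigma(n,\cdot)))$ over $F$ and matching it against $\llbracket(\dv\mathbb{S}\sigma)(n)+\dv_F(\sigma(n,\cdot))-H\sigma(n,n)\rrbracket$, an identity that tangles surface divergences, normal derivatives, and the splitting of $\Tr(\nabla_n\sigma)|_F$ into ambient and normal pieces. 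The differentiation route avoids this computation entirely; the only price is checking that $g+t\sigma$ stays a positive-definite Regge metric for small $t$ and that $vg,v\sigma\in\Sigma$, both of which are routine.
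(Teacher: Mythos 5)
Your proofs of \eqref{trG} and \eqref{trAh} are correct and essentially identical to the paper's: for \eqref{trAh} the paper makes the same substitution $\rho=vg$ in \eqref{Ah}, uses the same contractions (including $2\sigma:\Riem:g=-2\langle\Ric,\sigma\rangle$) and the same cancellation of the $\llbracket H\rrbracket\Tr(\sigma|_F)$ terms; for \eqref{trG} you are in fact more self-contained, since the paper only cites an external remark. Where you genuinely diverge is \eqref{trBh}. The paper does \emph{not} perform the "painful" trace you are trying to avoid: it substitutes $vg$ into the \emph{first} (differentiated) slot of \eqref{Bh}, where everything collapses easily ($2\ein(vg)=-(N-2)\mathbb{S}\nabla\nabla v$, $\mathbb{S}_F g=-(N-2)g|_F$, $g(n,\cdot)|_F=0$, $g(n,\nu)=0$), obtaining $B_h(g;vg,\sigma)=-\frac{N-2}{2}b_h(g;\sigma,v)$ directly via the second formula for $b_h$, and then uses the symmetry of $B_h$ for the swap. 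You instead differentiate \eqref{trG} along $g+t\sigma$ and subtract \eqref{trAh}; your coefficient bookkeeping is right, $-\frac{N-2}{2}b_h+\bigl(\frac{N-2}{2}-\frac{N-4}{2}-1\bigr)a_h=-\frac{N-2}{2}b_h$, and your differentiation identity is precisely the computation in the remark the paper places \emph{after} the lemma (derived there as a consequence; you run it in reverse). Both routes rely on the symmetry of $B_h$ for the first equality in \eqref{trBh}, so that is not a point of difference.

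The gap is in the step you call routine. Proposition~\ref{prop:disteindot} applies to a path of \emph{Regge metrics}, which by definition are piecewise \emph{smooth} and positive definite, whereas the lemma is asserted for every $\sigma\in\Sigma$, i.e.\ merely piecewise $H^2$ with tangential-tangential continuity. For such $\sigma$ the path $g+t\sigma$ is not a Regge metric for any $t\neq 0$, and for $N\ge 4$ even positive definiteness can fail for every $t\neq0$, since $H^2(T)\not\hookrightarrow L^\infty(T)$; likewise $v\sigma\in\Sigma$ is not automatic when $N\ge4$, because products of $H^2$ functions need not lie in $H^2$. So, as written, your argument establishes \eqref{trBh} only for piecewise smooth $\sigma\in\Sigma$. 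To cover all of $\Sigma$ you would need to add (i) that both sides of \eqref{trBh} are bounded linear functionals of $\sigma$ in the broken $H^2$ norm (true, via the trace inequalities used in Section~\ref{sec:convergence}, but it must be stated), and (ii) that piecewise smooth tangential-tangential continuous fields are dense in $\Sigma$ — a fact asserted nowhere in the paper and requiring its own proof. The paper's substitution avoids all of this, because placing $vg$ in the differentiated slot of $B_h$ needs nothing beyond $v|_T\in H^2(T)$, which is exactly the regularity $\widetilde{V}$ provides.
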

\begin{proof}
	See Appendix~\ref{app:proof_lemma_trAhBh} for a proof of~(\ref{trBh}-\ref{trAh}).  A proof of~\eqref{trG} is given in~\cite[Remark 3.11]{gawlik2023scalar} for boundaryless domains, and that proof extends easily to bounded domains.
\end{proof}
\begin{remark}
	Lemma~\ref{lemma:trAhBh} is consistent with the fact that if $g$ varies with $t$ and $\sigma = \frac{\partial}{\partial t}g$, then
	\begin{align*}
		\left\llangle \frac{d}{dt} (G\omega)_{\rm dist}(g), vg \right\rrangle
		&= -\llangle (G\omega)_{\rm dist}(g), v \sigma \rrangle + \frac{d}{dt} \llangle (G\omega)_{\rm dist}(g), vg \rrangle  \\
		&= -\llangle (G\omega)_{\rm dist}(g), v \sigma \rrangle - \left( \frac{N-2}{2} \right) \frac{d}{dt} \llangle (R\omega)_{\rm dist}(g), v \rrangle \\
		&=  -a_h(g;\sigma,v) - \left( \frac{N-2}{2} \right) \left( b_h(g;\sigma,v) - a_h(g;\sigma,v) \right) \\
		&= -\left( \frac{N-2}{2} \right) b_h(g;\sigma,v) + \left( \frac{N-4}{2} \right) a_h(g; \sigma, v)
	\end{align*}
	for any $v \in \widetilde{V}$.
\end{remark}

\subsection{Distributional linearized Einstein operator}
\label{subsec:distr_cov_einop}

When $g$ is the Euclidean metric $\delta$, the bilinear form $B_h(g;\cdot,\cdot)$ simplifies to
\begin{align}
	\label{eq:distr_eucl_ein}
	\begin{split}
		B_h(\delta; \sigma,\rho) = \sum_T \int_T  \ein \sigma :\rho &+ \frac{1}{2}{\sum_{F}} \int_F \left\llbracket \mathbb{S}_F(\nabla_n \sigma - 2(\nabla_F\sigma)(n,\cdot)) \right\rrbracket: \rho|_F \\
		&\quad - \frac{1}{2}{\sum_{S}} \int_S \sum_{F \supset S} \llbracket \sigma(n,\nu) \rrbracket_F \Tr(\rho|_S),
	\end{split}
\end{align}
where all differential operators and normal vectors above are with respect to the Euclidean metric, $A:B=\sum_{i,j=1}^NA_{ij}B_{ij}$ denotes the Frobenius inner product, and we have omitted the (Euclidean) volume forms for brevity.  The right-hand side of~\eqref{eq:distr_eucl_ein} is precisely the expression one encounters when computing the action of the differential operator $\ein$, interpreted in a distributional sense, on piecewise smooth symmetric tensor fields possessing tangential-tangential continuity.
\begin{lemma}
	\label{lem:eucl_distr_ein}
	For any piecewise smooth $\sigma \in \Sigma$ and any smooth symmetric $(0,2)$-tensor field $\rho$ with compact support, we have
	\[
	B_h(\delta; \sigma,\rho) = \llangle \ein_{\rm dist} \sigma, \rho \rrangle :=  \int_\Omega \sigma : \ein \rho,
	\]
	where $\ein$ is taken with respect to the Euclidean metric $\delta$.
\end{lemma}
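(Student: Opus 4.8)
The plan is to view the right-hand side $\int_\Omega \sigma:\ein\rho$ as the result of integrating the volume term of~\eqref{eq:distr_eucl_ein} by parts, and to show that the codimension-1 and codimension-2 terms of $B_h(\delta;\sigma,\rho)$ are exactly the boundary concomitant produced by that integration by parts. Since $\rho$ is smooth and the $N$-simplices tile $\Omega$, I may write $\int_\Omega\sigma:\ein\rho = \sum_T\int_T\sigma:\ein\rho$. Recalling from~\eqref{linein} that $\ein = J\df\dv J - \tfrac12\Delta J$ and that $\ein$ is self-adjoint for the $L^2(\Omega,\delta)$-inner product (Remark following Proposition~\ref{prop:eindot}), integrating by parts twice on each $T$ gives a Green-type identity
\[
\int_T \ein\sigma:\rho - \int_T\sigma:\ein\rho = \int_{\partial T}\mathcal{B}(\sigma,\rho;n),
\]
where $\mathcal{B}$ is a boundary concomitant, bilinear in $(\sigma,\rho)$ and their first derivatives and depending on the outward unit normal $n$ of $\partial T$. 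Summing over $T$ and using that $\rho$ has compact support (so that no $\partial\Omega$ contributions occur and every face and edge meeting $\operatorname{supp}\rho$ is interior), the lemma reduces to the identity
\[
\sum_T\int_{\partial T}\mathcal{B}(\sigma,\rho;n) = -\tfrac12\sum_F\int_F\bigl\llbracket\mathbb{S}_F(\nabla_n\sigma - 2(\nabla_F\sigma)(n,\cdot))\bigr\rrbracket:\rho|_F + \tfrac12\sum_S\int_S\sum_{F\supset S}\llbracket\sigma(n,\nu)\rrbracket_F\,\Tr(\rho|_S).
\]

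To verify this I organize the boundary terms face by face. On an interior face $F$ shared by $T^+$ and $T^-$ with $n^+=-n^-$, the smoothness of $\rho$ makes $\rho$ and $\nabla\rho$ single-valued, so adding the two contributions turns $\mathcal{B}$ into jumps $\llbracket\cdot\rrbracket$ acting only on the $\sigma$-dependent factors; the tangential-tangential continuity of $\sigma$ (from $\sigma\in\Sigma$) then annihilates the continuous part of those factors. I split the first-order-in-$\rho$ part of $\mathcal{B}$ according to whether $\rho$ is differentiated in the normal or in a tangential direction. The normal-derivative pieces cannot be removed by integration along $F$, so they must vanish on their own: after applying the trace reversal their $\sigma$-jump coefficient collapses to a jump of a purely tangential-tangential quantity, which is zero (this is the tensor analogue of the identity $\mathbb{S}\sigma(n,n) = -\Tr(\sigma|_F)$ used in the scalar setting). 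The tangential-derivative pieces are handled by the surface Stokes identity~\eqref{surfacestokes}: transferring the surface derivative onto $\sigma$ produces, in the interior of $F$, additional zeroth-order-in-$\rho$ contributions, and, on $\partial F$, boundary terms of the form $\sigma(n,\nu)\,\Tr(\rho|_{\partial F})$.

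Collecting the surviving zeroth-order-in-$\rho$ face contributions and rewriting them through the definitions of $J$, of $\mathbb{S}_F$ in~\eqref{SF}, and of $\nabla_F$, I identify them with $-\tfrac12\llbracket\mathbb{S}_F(\nabla_n\sigma - 2(\nabla_F\sigma)(n,\cdot))\rrbracket:\rho|_F$, using that contracting $\mathbb{S}_F(\cdot)$ against $\rho|_F$ equals contracting against $\mathbb{S}_F\rho$ by pointwise self-adjointness of the trace reversal. Summing the $\partial F$ contributions over all faces $F$ adjacent to a fixed edge $S$ assembles them into $\tfrac12\int_S\sum_{F\supset S}\llbracket\sigma(n,\nu)\rrbracket_F\Tr(\rho|_S)$, the co-normal $\nu$ appearing because $\nu_F$ restricts to $\nu$ along $S$. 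This whole computation parallels the one carried out for the scalar curvature in~\cite[Lemma 3.4]{gawlik2023scalar}. The main obstacle is precisely this treatment of the first-order-in-$\rho$ boundary terms: verifying that the normal-derivative contributions vanish by tangential-tangential continuity and that surface integration by parts reproduces exactly the trace-reversed face integrand together with the edge coefficient $\llbracket\sigma(n,\nu)\rrbracket_F$; both require careful bookkeeping of the trace-reversal operator and of the normal/tangential decomposition of $\sigma$ across the two integration-by-parts steps.
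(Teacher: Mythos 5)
Your overall route is the same as the paper's: write $\int_\Omega \sigma:\ein\rho$ as a sum over elements, integrate by parts twice on each $T$, organize the boundary concomitant into face jumps (which act only on $\sigma$ since $\rho$ is smooth), kill the $\nabla_n\rho$-pieces using tangential--tangential continuity, integrate by parts along each $F$ via~\eqref{surfacestokes}, and assemble the $\partial F$ contributions into codimension-2 terms. Your treatment of the volume and face terms is sound; in particular your claim about the normal-derivative pieces is correct: one can check that
\[
2\,J\sigma(n,\cdot)\cdot\nabla_n(J\rho)(n,\cdot) - J\sigma:\nabla_n\rho
= \Tr(\sigma|_F)\,\Tr\!\big((\nabla_n\rho)|_F\big) - \sigma|_F : (\nabla_n\rho)|_F ,
\]
so the $\sigma$-dependence collapses to single-valued tangential--tangential components, and the two-sided sum vanishes because $\nabla_n\rho$ is odd under $n\mapsto -n$. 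This is exactly the cancellation the paper exploits (stated there as the equality of the two expansions of $2\llbracket J\sigma(n,\cdot)\rrbracket\cdot\nabla_n(J\rho)(n,\cdot)$ and $\llbracket J\sigma\rrbracket:\nabla_n\rho$).

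The genuine gap is in the codimension-2 step. You assert that surface integration by parts along $F$ produces boundary terms ``of the form $\sigma(n,\nu)\,\Tr(\rho|_{\partial F})$,'' but it does not: the raw $\partial F$ term is $2\llbracket J\sigma(n,\cdot)\rrbracket\cdot J\rho(\nu,\cdot)$, which, expanded in an orthonormal frame $\{\tau_i\}$ of $\partial F$, contains many other couplings --- $\sigma(n,\tau_i)\rho(\nu,\tau_i)$, $\sigma(n,\nu)\rho(\nu,\nu)$, $\sigma(n,\nu)\rho(n,n)$, $\sigma(\tau_i,\tau_i)\rho(\nu,n)$, $\sigma(\nu,\nu)\rho(\nu,n)$, $\sigma(n,n)\rho(\nu,n)$, and so on. Reducing the sum of these terms over all faces meeting a fixed $(N-2)$-simplex $S$ to $-\sum_{F\supset S}\llbracket\sigma(n,\nu)\rrbracket_F\,\Tr(\rho|_S)$ is the hardest part of the paper's proof: one writes the pairing in matrix form, exploits skew-symmetry of the matrices $K_i=\nu\tau_i^T-\tau_i\nu^T$ and $L_i=\tau_i n^T-n\tau_i^T$, and uses the fact that the two frames $(\nu_0,n_0)$ and $(\nu_1,n_1)$ attached to the two faces of an element containing $S$ are related by a rotation in the plane orthogonal to $S$ (Christiansen's argument), together with tangential--tangential continuity of $\sigma$, to see that all couplings except $\llbracket\sigma(n,\nu)\rrbracket_F\Tr(\rho|_S)$ cancel upon summation around $S$. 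Your proposal replaces this argument with the remark that ``$\nu_F$ restricts to $\nu$ along $S$,'' which does not address why the extraneous components of $2J\sigma(n,\cdot)\cdot J\rho(\nu,\cdot)$ disappear, nor does it fix the sign and the precise jump structure of the resulting edge term. You flag this as ``careful bookkeeping,'' but it is not bookkeeping; it is a separate cancellation mechanism (rotation invariance around $S$) that the rest of your argument never invokes, and without it the lemma is not proved.
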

\begin{proof}
	See Appendix~\ref{app:distr_Eucl_ein}.
\end{proof}

\section{Convergence} \label{sec:convergence}

We will now consider a family of triangulations $\{\mathcal{T}_h\}_{h>0}$ of $\Omega$ parametrized by $h = \max_{T \in \mathcal{T}_h^N} h_T$, where $h_T = \operatorname{diam}(T)$.  We address the following question:  If $g_h$ is a Regge metric on $\mathcal{T}_h$ that approximates $g$, how close is its distributional densitized Einstein tensor to the densitized Einstein tensor of $g$?  We will measure the error with the negative-order Sobolev norm
\begin{equation} \label{2norm}
	\|\sigma\|_{H^{-2}(\Omega)} = \sup_{\substack{\rho \in H^2_0 S^0_2(\Omega), \\ \rho \neq 0}} \frac{ \llangle \sigma, \rho \rrangle_{H^{-2}(\Omega),  H^2_0(\Omega)} }{ \|\rho\|_{H^2(\Omega)} }
\end{equation}
and show that it converges to zero as $h \to 0$ under suitable assumptions on $g$, $\{g_h\}_{h>0}$, and $\{\mathcal{T}_h\}_{h>0}$. The assumption we make on $\{\mathcal{T}_h\}_{h>0}$ is shape-regularity; that is, there exists a constant $C_0$ independent of $h$ such that
\[
\max_{T \in \mathcal{T}_h^N} \frac{h_T}{\varrho_T} \le C_0
\]
for all $h>0$, where $\varrho_T$ denotes the inradius of $T$. 

Throughout what follows, $\|\cdot\|_{W^{s,p}(D)}$ and $|\cdot|_{W^{s,p}(D)}$ denote the $W^{s,p}(D)$-norm and $W^{s,p}(D)$-seminorm, respectively, on a subdomain $D \subseteq \Omega$ or a simplex $D \in \mathcal{T}_h$  for $s \ge 0$ and $p \in [1,\infty]$.  We write $\|\cdot\|_{L^p(D)}=\|\cdot\|_{W^{0,p}(D)}$, $\|\cdot\|_{H^s(D)}=\|\cdot\|_{W^{s,2}(D)}$, and $|\cdot|_{H^s(D)}=|\cdot|_{W^{s,2}(D)}$.  We use the same notation when taking Sobolev norms of tensor fields, like in~\eqref{2norm}.  We understand these norms and seminorms to be with respect to the Euclidean metric $\delta$.  Occasionally we will also make use of the metric-dependent norm
\[
\|\rho\|_{L^p(D,g)} = 
\begin{cases}
	\left( \int_D |\rho|_g^p \, \omega_D(g) \right)^{1/p}, &\mbox{ if } 1 \le p < \infty, \\
	\sup_D |\rho|_g, &\mbox{ if } p=\infty,
\end{cases}
\]
where $\omega_D(g)$ is the induced volume form on $D$ and $|\rho|_g = \langle \rho, \rho \rangle_g^{1/2}$.  Note that $\|\cdot\|_{L^p(D,\delta)} = \|\cdot\|_{L^p(D)}$.

Our main result is stated in the following theorem and will be proved in the consecutive sections.  Below, we use $(G\omega)(g)$ to denote the linear functional $\rho \mapsto \int_\Omega \langle G(g), \rho \rangle_g \omega(g)$.
\begin{theorem} \label{thm:conv}
	Let $\Omega \subset \mathbb{R}^N$ be a polyhedral domain equipped with a smooth Riemannian metric $g$.  Let $\{g_h\}_{h>0}$ be a family of Regge metrics defined on a shape-regular family $\{\mathcal{T}_h\}_{h>0}$ of triangulations of $\Omega$.  Assume that $\lim_{h \to 0} \|g_h-g\|_{L^\infty(\Omega)} = 0$ and $C_1:=\sup_{h>0} \max_{T \in \mathcal{T}_h^N} \|g_h\|_{W^{2,\infty}(T)} < \infty$. Then there exist positive constants $C$ and $h_0$ such that
	\begin{equation} \label{conv}
		\begin{split}
			\| (G\omega)_{\rm dist}(g_h) - (G\omega)(g)&\|_{H^{-2}(\Omega)} \le C \left( 1 + \max_T h_T^{-2} \|g_h-g\|_{L^\infty(T)} + \max_T h_T^{-1} |g_h-g|_{W^{1,\infty}(T)}  \right) \\
			&\quad\times \left( \|g_h-g\|_{L^2(\Omega)}^2 + \sum_T h_T^2 |g_h-g|_{H^1(T)}^2 + \sum_T h_T^4 |g_h-g|_{H^2(T)}^2 \right)^{1/2}
		\end{split}
	\end{equation}
	for all $h \le h_0$.  The constants $C$ and $h_0$ depend on $N$, $\|g\|_{W^{2,\infty}(\Omega)}$, $\|g^{-1}\|_{L^\infty(\Omega)}$, $C_0$, and $C_1$.
\end{theorem}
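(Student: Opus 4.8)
The plan is to connect $g_h$ to $g$ by the linear path $\widetilde{g}(t) = (1-t)g + t g_h$, $t \in [0,1]$, with constant velocity $\sigma = \frac{\partial}{\partial t}\widetilde{g} = g_h - g$, and to integrate the evolution formula of Proposition~\ref{prop:disteindot} along it. First I would verify that the path is admissible: since $g$ is smooth and uniformly positive definite and $\|g_h-g\|_{L^\infty(\Omega)} \to 0$, there is an $h_0$ so that $\widetilde{g}(t)$ is positive definite on every $T$ for all $t \in [0,1]$ whenever $h \le h_0$, while tangential-tangential continuity of $\widetilde{g}(t)|_F$ is inherited from $g$ and $g_h$ by linearity; hence each $\widetilde{g}(t)$ is a Regge metric. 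Because $g$ is globally smooth, its jumps $\llbracket \overline{\sff} \rrbracket_F$ and angle defects $\Theta_S$ vanish, so $(G\omega)_{\rm dist}(\widetilde{g}(0)) = (G\omega)_{\rm dist}(g) = (G\omega)(g)$. Combining this with Proposition~\ref{prop:disteindot} and the fundamental theorem of calculus gives, for every $\rho \in H^2_0 S^0_2(\Omega)$,
\[
\llangle (G\omega)_{\rm dist}(g_h) - (G\omega)(g), \rho \rrangle = \int_0^1 \left( B_h(\widetilde{g}(t); \sigma, \rho) + A_h(\widetilde{g}(t); \sigma, \rho) \right) dt.
\]
It then suffices to bound the two bilinear forms, uniformly in $t$, by the right-hand side of~\eqref{conv} times $\|\rho\|_{H^2(\Omega)}$, and to take the supremum over $\rho$.

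For the principal part $B_h$ I would exploit its symmetry (Section~\ref{sec:rel_scalar_curv}) to write $B_h(\widetilde{g}(t); \sigma, \rho) = B_h(\widetilde{g}(t); \rho, \sigma)$ and apply~\eqref{Bhsimpler} with $\rho$ in the perturbation slot. This transfers the two derivatives of the linearized Einstein operator onto the smooth argument $\rho$ (the metric-dependent analogue of Lemma~\ref{lem:eucl_distr_ein}): the volume contribution becomes $\sum_T \int_T \langle 2\ein \rho, \sigma \rangle \, \omega_T$, bounded by $\sum_T \|\ein_{\widetilde{g}(t)} \rho\|_{L^2(T)} \|\sigma\|_{L^2(T)} \lesssim \|\rho\|_{H^2(\Omega)} \|\sigma\|_{L^2(\Omega)}$ with a constant depending only on the uniform $W^{2,\infty}$ bound on $\widetilde{g}(t)$ and on $\|g^{-1}\|_{L^\infty}$; this yields the leading $\|g_h-g\|_{L^2(\Omega)}$ term of~\eqref{conv}. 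The remaining face and codimension-$2$ contributions are jumps of $\widetilde{g}(t)$-dependent expressions built from $\rho$, its first covariant derivatives, the unit normal $n$, and $\sff$. Because $\rho$ is globally $H^2_0$ and $g$ is smooth, each such jump vanishes identically when $\widetilde{g}(t)$ is replaced by $g$; the difference is therefore controlled by the jumps across $F$ of $n(\widetilde{g}(t))$, the Christoffel symbols, and $\sff(\widetilde{g}(t))$, which are $O(\|\sigma\|_{L^\infty} + |\sigma|_{W^{1,\infty}})$ on each element. Pairing with $\sigma|_F$ (respectively $\Tr(\sigma|_S)$) and invoking the scaled trace inequality $\|v\|_{L^2(\partial T)}^2 \lesssim h_T^{-1}\|v\|_{L^2(T)}^2 + h_T |v|_{H^1(T)}^2$, iterated on codimension-$2$ faces, produces exactly the products $\max_T h_T^{-1} |g_h-g|_{W^{1,\infty}(T)}$ and $\max_T h_T^{-2} \|g_h-g\|_{L^\infty(T)}$ against the weighted norm in the second factor of~\eqref{conv}.

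The form $A_h$ carries no derivatives of $\sigma$ or $\rho$, so it is handled directly. Its volume terms pair the curvature of $\widetilde{g}(t)$ with $\sigma$ and $\rho$; bounding the curvature in $L^\infty$ by the $W^{2,\infty}$ data gives a contribution $\lesssim \|\sigma\|_{L^2(\Omega)} \|\rho\|_{L^2(\Omega)}$, i.e.\ the ``$1$'' in the first factor times $\|g_h-g\|_{L^2(\Omega)}$. The face terms involve $\llbracket \overline{\sff} \rrbracket$ and $\llbracket H \rrbracket$ and the codimension-$2$ terms involve $\Theta_S$; all of these vanish for the smooth metric $g$, so along the path they are controlled by integrating the evolution identities of Proposition~\ref{prop:sffdot} and~\eqref{angledefectdot}, yielding $\|\llbracket \overline{\sff}(\widetilde{g}(t)) \rrbracket\|_{L^\infty(F)}, \|\llbracket H \rrbracket\|_{L^\infty(F)} \lesssim |\sigma|_{W^{1,\infty}}$ and $\|\Theta_S(\widetilde{g}(t))\|_{L^\infty(S)} \lesssim \|\sigma\|_{L^\infty}$. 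Applying the trace inequality once on faces (producing the weight $h_T^{-1} |g_h-g|_{W^{1,\infty}(T)}$) and twice on codimension-$2$ faces (producing $h_T^{-2} \|g_h-g\|_{L^\infty(T)}$) again reproduces the structure of~\eqref{conv}.

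Finally I would collect the element-wise bounds, apply discrete Cauchy--Schwarz in the sums over $T$ (shape-regularity keeping all trace constants uniform), integrate in $t$ over $[0,1]$ (all estimates being uniform in $t$ once $\widetilde{g}(t)$ is uniformly positive definite and $W^{2,\infty}$-bounded for $h \le h_0$), divide by $\|\rho\|_{H^2(\Omega)}$, and take the supremum over $\rho$ to obtain~\eqref{conv}. I expect the main obstacle to be the treatment of $B_h$: one must genuinely use the tangential-tangential continuity of $\sigma$ together with the transfer of derivatives onto $\rho$ to avoid an uncontrolled $|\sigma|_{H^2}$ term with no $h$-weight, and one must check that every interface jump truly degenerates to a quantity that is \emph{small} in $\sigma$—not merely bounded—by comparing the geometry of $\widetilde{g}(t)$ to the smooth geometry of $g$. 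Keeping the powers of $h_T$ correct through the single and iterated trace inequalities, so that exactly the weights $h_T, h_T^2$ in the second factor and the inverse weights $h_T^{-1}, h_T^{-2}$ in the first factor emerge, is the most delicate bookkeeping in the argument.
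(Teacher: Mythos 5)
Your proposal is correct and follows essentially the same route as the paper's proof: the linear path $\widetilde{g}(t)=(1-t)g+tg_h$ with the integral representation~\eqref{integral_rep}, the symmetry of $B_h$ to place $\rho$ in the differentiated slot as in~\eqref{Bh2}, the codimension-by-codimension estimates (volume terms via~\eqref{gtbound}, face terms via the jump bounds of Lemma~\ref{lem:est_facet_terms} with one trace inequality, codimension-2 terms via normal-vector jump bounds with the iterated trace inequality), and the same smallness mechanism that jumps of $\widetilde{g}$-dependent quantities reduce to jumps of $g_h-g$ because $g$ is smooth. The only cosmetic deviation is that you propose to obtain the bounds on $\llbracket\overline{\sff}(\widetilde{g})\rrbracket$ and $\Theta_S(\widetilde{g})$ by integrating the evolution identities of Proposition~\ref{prop:sffdot} and~\eqref{angledefectdot} along the path, whereas the paper estimates these jumps directly (Lemma~\ref{lem:est_facet_terms} and the argument cited from~\cite{gawlik2023scalar}); both rest on the same observation and yield the same weights.
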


Notice that if $g_h$ satisfies error estimates of the form 
\begin{equation} \label{interperr}
	|g_h-g|_{W^{s,p}(T)} = O(h_T^{r+1-s}), \quad (s,p) \in \{(0,\infty),(1,\infty),(0,2),(1,2),(2,2)\}
\end{equation}
for some integer $r \ge 1$, then Theorem~\ref{thm:conv} leads to an error estimate of the form $\| (G\omega)_{\rm dist}(g_h) - (G\omega)(g) \|_{H^{-2}(\Omega)} = O(h^{r+1})$.  This is typically the case when $g_h$ is a piecewise polynomial interpolant of $g$ of degree $r \ge 1$.

To make this more precise, and to accommodate slightly more natural hypotheses than~\eqref{interperr}, we follow~\cite{gawlik2023scalar} and define the notion of an optimal-order interpolation operator onto the Regge finite element space.  Recall that the Regge finite element space of degree $r \ge 0$ consists of symmetric $(0,2)$-tensor fields on $\Omega$ that are piecewise polynomial of degree at most $r$ and possess single-valued tangential-tangential components on codimension-1 simplices. 

\begin{definition}[Definition 4.2 in~\cite{gawlik2023scalar}] \label{def:optimalorder}
	Let $\mathcal{I}_h$ be a map that sends smooth symmetric $(0,2)$-tensor fields on $\Omega$ to the Regge finite element space of degree $r \ge 0$.   We say that $\mathcal{I}_h$ is an \emph{optimal-order interpolation operator} of degree $r$ if there exists a number $m \in \{0,1,\dots,N\}$ and a constant $C_2=C_2(N,r,h_T/\varrho_T,t,s)$ such that for every $p \in [1,\infty]$, every $s \in (m/p,r+1]$, every $t \in [0,s]$, and every symmetric $(0,2)$-tensor field $g$ possessing $W^{s,p}(\Omega)$-regularity, $\mathcal{I}_h g$ exists (upon continuously extending $\mathcal{I}_h$) and satisfies
	\begin{equation} \label{optimalorder}
		|\mathcal{I}_h g - g|_{W^{t,p}(T)} \le C_2 h_T^{s-t} |g|_{W^{s,p}(T)}
	\end{equation}
	for every $T \in \mathcal{T}_h^N$.  We call the number $m$ the \emph{codimension index} of $\mathcal{I}_h$.  A Regge metric $g_h$ is called an \emph{optimal-order interpolant} of $g$ having degree $r$ and codimension index $m$ if it is the image of a Riemannian metric $g$ under an  optimal-order interpolation operator having degree $r$ and codimension index $m$.  
\end{definition}

As discussed in~\cite{gawlik2023scalar}, an example of an optimal-order interpolation operator is the canonical interpolation operator onto the degree-$r$ Regge finite element space introduced in~\cite[Chapter 2]{li2018regge}.   It has codimension index $m=N-1$ because its degrees of freedom involve integrals over simplices of codimension at most $N-1$, which are generally ill-defined unless $g$ possesses $W^{s,p}(\Omega)$-regularity with $s>(N-1)/p$.

\begin{corollary} \label{cor:conv}
	Let $\Omega$, $g$, and $\{\mathcal{T}_h\}_{h>0}$ be as in Theorem~\ref{thm:conv}.  Let $\{g_h\}_{h>0}$ be a family of optimal-order interpolants of $g$ having degree $r \ge 1$ and codimension index $m$. Then there exist positive constants $C$ and $h_0$ such that
	\[
	\| (G\omega)_{\rm dist}(g_h) - (G\omega)(g)\|_{H^{-2}(\Omega)} \le C \left( \sum_T h_T^{p(r+1)} |g|_{W^{r+1,p}(T)}^p \right)^{1/p}
	\]
	for all $h \le h_0$ and all $p \in [2,\infty]$ satisfying $p > \frac{m}{r+1}$.  (We interpret the right-hand side as $C\max_T h_T^{r+1} |g|_{W^{r+1,\infty}(T)}$ if $p=\infty$.)  The constants $C$ and $h_0$ depend on the same quantities listed in Theorem~\ref{thm:conv}, as well as on $\Omega$ and $r$.
\end{corollary}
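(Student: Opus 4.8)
The plan is to derive the corollary directly from Theorem~\ref{thm:conv} by feeding the optimal-order interpolation estimate \eqref{optimalorder} into the right-hand side of \eqref{conv}. Since $g$ is smooth it lies in $W^{s,p}(\Omega)$ for every $s$ and $p$, so \eqref{optimalorder} is available at any integrability $p$ and any smoothness $s\le r+1$. Two tasks must be carried out: first, to verify the hypotheses of Theorem~\ref{thm:conv}, namely $\|g_h-g\|_{L^\infty(\Omega)}\to 0$ and $C_1=\sup_h\max_T\|g_h\|_{W^{2,\infty}(T)}<\infty$; second, to bound the two factors appearing on the right of \eqref{conv}, the ``amplitude'' factor $1+\max_T h_T^{-2}\|g_h-g\|_{L^\infty(T)}+\max_T h_T^{-1}|g_h-g|_{W^{1,\infty}(T)}$ and the ``data'' factor $\big(\|g_h-g\|_{L^2(\Omega)}^2+\sum_T h_T^2|g_h-g|_{H^1(T)}^2+\sum_T h_T^4|g_h-g|_{H^2(T)}^2\big)^{1/2}$.

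First I would apply \eqref{optimalorder} with $p=\infty$, $s=r+1$, and $t\in\{0,1,2\}$. Because $r\ge 1$ forces $s=r+1\ge 2\ge t$ and keeps the exponent $r+1-t$ nonnegative, this yields $\|g_h-g\|_{L^\infty(T)}\le C h_T^{r+1}|g|_{W^{r+1,\infty}(T)}\to 0$ and $|g_h-g|_{W^{2,\infty}(T)}\le C h_T^{r-1}|g|_{W^{r+1,\infty}(T)}$, so $C_1<\infty$ once $h_0\le 1$; thus the hypotheses of Theorem~\ref{thm:conv} hold. The same estimates give $h_T^{-2}\|g_h-g\|_{L^\infty(T)}=O(h_T^{r-1})$ and $h_T^{-1}|g_h-g|_{W^{1,\infty}(T)}=O(h_T^{r-1})$, so (using $r\ge 1$ and $h_T\le h_0\le 1$) the amplitude factor in \eqref{conv} is bounded by a constant depending only on $r$ and the fixed smooth metric $g$. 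It therefore remains to bound the data factor.

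For the data factor I would use \eqref{optimalorder} at the target integrability $p$, noting that the hypothesis $p>\frac{m}{r+1}$ is precisely the condition $s=r+1>\frac{m}{p}$ needed for \eqref{optimalorder} to apply. For $p=2$ this gives, on each $T$, $\|g_h-g\|_{L^2(T)}^2+h_T^2|g_h-g|_{H^1(T)}^2+h_T^4|g_h-g|_{H^2(T)}^2\le C h_T^{2(r+1)}|g|_{W^{r+1,2}(T)}^2$, since in each term the weight $h_T^{2t}$ meets $h_T^{2(r+1-t)}$; summing over $T$ gives the claim. For general $p\in(\max\{2,m/(r+1)\},\infty]$ I would first pass from the $L^2$-based seminorms in \eqref{conv} to $L^p$-based ones on each simplex via Hölder, $|w|_{W^{t,2}(T)}\le C|T|^{1/2-1/p}|w|_{W^{t,p}(T)}$ with $|T|\le C h_T^N$, and then invoke \eqref{optimalorder} at integrability $p$; this produces the per-simplex bound $\|g_h-g\|_{L^2(T)}^2+h_T^2|g_h-g|_{H^1(T)}^2+h_T^4|g_h-g|_{H^2(T)}^2\le C\, h_T^{N(1-2/p)}h_T^{2(r+1)}|g|_{W^{r+1,p}(T)}^2$. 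Summing over $T$ and applying a discrete Hölder inequality with exponents $p/2$ and $p/(p-2)$, the spurious geometric weight collapses because $N(1-2/p)\cdot\frac{p}{p-2}=N$ together with the shape-regularity bound $\sum_T h_T^N\le C|\Omega|$; what remains is exactly $\big(\sum_T h_T^{p(r+1)}|g|_{W^{r+1,p}(T)}^p\big)^{1/p}$, with the constant absorbing $|\Omega|^{(p-2)/(2p)}$. The endpoint $p=\infty$ is the same computation with $|T|^{1/2}$ in place of $|T|^{1/2-1/p}$ and $\sum_T h_T^N\le C|\Omega|$ used directly.

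The only step carrying any subtlety is this last one: matching the fixed $L^2$-structure of the estimate in Theorem~\ref{thm:conv} to the variable $W^{r+1,p}$-norm demanded by the corollary, and in particular verifying that the exponent arithmetic in the discrete Hölder step reduces the extra factor $h_T^{N(1-2/p)}$ to the harmless sum $\sum_T h_T^N\lesssim|\Omega|$ (which is where shape-regularity is used). Everything else is a direct substitution of \eqref{optimalorder} into \eqref{conv}.
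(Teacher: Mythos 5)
Your proposal is correct and takes essentially the same route as the paper: the paper's proof likewise substitutes the interpolation property \eqref{optimalorder} into Theorem~\ref{thm:conv} and converts the $L^2$-based mesh-dependent quantities in \eqref{conv} to $L^p$-based ones via exactly the per-simplex H\"older plus discrete H\"older argument you describe (the paper records this as three displayed bounds with the factor $|\Omega|^{1/2-1/p}$, using $\sum_T |T| = |\Omega|$ where you use $\sum_T h_T^N \le C|\Omega|$), while leaving the verification of the hypotheses of Theorem~\ref{thm:conv} and the boundedness of the amplitude factor implicit. One small refinement to match the stated constant dependence: bound the amplitude factor using \eqref{optimalorder} with $s=2$, $p=\infty$ (rather than $s=r+1$), which gives $h_T^{-2}\|g_h-g\|_{L^\infty(T)} + h_T^{-1}|g_h-g|_{W^{1,\infty}(T)} \le C\|g\|_{W^{2,\infty}(\Omega)}$, so that $C$ depends only on the quantities listed in the corollary and not on $|g|_{W^{r+1,\infty}(\Omega)}$.
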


\begin{remark} \label{remark:optimalorder_general}
	The corollary above continues to hold if we relax~(\ref{optimalorder}) to the condition that
	\begin{equation} \label{optimalorder_general}
		|\mathcal{I}_h g - g|_{W^{t,p}(T)} \le C_2 h_T^{s-t} \sum_{\substack{T' : T' \cap T \neq \emptyset}} |g|_{W^{s,p}(T')},
	\end{equation}
	where the sum is over all $T' \in \mathcal{T}_h^N$ that share a subsimplex with $T$.  We will exploit this observation in Section~\ref{sec:numerical} by using an interpolant that satisfies~\eqref{optimalorder_general} but not~\eqref{optimalorder} to do our numerical experiments.
\end{remark}

In what follows, we reuse the letter $C$ to denote a positive constant that may change at each occurrence and may depend on $N$, $\|g\|_{W^{2,\infty}(\Omega)}$, $\|g^{-1}\|_{L^\infty(\Omega)}$, $C_0$, and $C_1$.

We will prove Theorem~\ref{thm:conv} using a strategy that parallels the one used in~\cite{gawlik2023scalar}.  Consider the evolving metric
\[
\gt(t) = (1-t)g + tg_h
\]
with time derivative
\[
\sigma = \frac{\partial}{\partial t}\gt(t) = g_h-g.
\]
Since $\gt(0)=g$, $\gt(1)=g_h$, and $\gt(t)$ is a Regge metric for all $t \in [0,1]$, Proposition~\ref{prop:disteindot} implies that
\begin{equation} \label{integral_rep}
	\llangle (G\omega)_{\rm dist}(g_h) - (G\omega)(g), \rho \rrangle =
	\int_0^1 B_h(\gt(t);\sigma,\rho) + A_h(\gt(t);\sigma,\rho) \, dt, \quad \forall \rho \in \Sigma.
\end{equation}
Thus, we can estimate $(G\omega)_{\rm dist}(g_h) - (G\omega)(g)$ by estimating the bilinear forms $B_h(\gt(t);\cdot,\cdot)$ and $A_h(\gt(t);\cdot,\cdot)$.

First we need to recall a few basic estimates that were discussed in~\cite[Section 4]{gawlik2023scalar}, all of which were proved or follow readily from estimates proved in~\cite[Section 4.2]{gawlik2020high}.  Assume that $\lim_{h \to 0} \|g_h-g\|_{L^\infty(\Omega)} = 0$ and $\sup_{h>0} \max_{T \in \mathcal{T}_h^N} \|g_h\|_{W^{2,\infty}(T)} < \infty$.  Then for every $h$ sufficiently small and every $t \in [0,1]$,
\begin{equation} \label{gtbound}
	\|\gt\|_{L^\infty(\Omega)} + \|\gt^{-1}\|_{L^\infty(\Omega)} + \max_{T \in \mathcal{T}_h^N} |\gt|_{W^{1,\infty}(T)}  + \max_{T \in \mathcal{T}_h^N} |\gt|_{W^{2,\infty}(T)} \le C.
\end{equation}
Furthermore,
\begin{equation} \label{Lpequiv1}
	C^{-1} \|\rho\|_{L^p(D,\gt(t_2))} \le \|\rho\|_{L^p(D,\gt(t_1))} \le C \|\rho\|_{L^p(D,\gt(t_2))}
\end{equation}
and
\begin{equation} \label{Lpequiv2}
	C^{-1} \|\rho\|_{L^p(D)} \le \|\rho\|_{L^p(D,\gt(t_1))} \le C \|\rho\|_{L^p(D)}
\end{equation}
for every $t_1,t_2 \in [0,1]$, every simplex $D \in \mathcal{T}_h$, every $p \in [1,\infty]$, every tensor field $\rho$ having finite $L^p(D)$-norm, and every $h$ sufficiently small.  We select $h_0>0$ so that~(\ref{gtbound}-\ref{Lpequiv2}) hold for all $h \le h_0$, and we tacitly use these inequalities throughout our analysis.

We will need the following additional estimates in our analysis.  Note that in what follows, we make explicit the dependencies of various quantities on the metric by either appending a subscript or by referencing the metric in parentheses.  In particular, on the boundary of any $N$-simplex $T$, we let $n_{\gt}$ denote the outward unit normal vector with respect to $\left.\gt\right|_T$.  We remark that the Euclidean length of $n_{\gt}$ is everywhere bounded above by a constant independent of $h$ and $t$, owing to~\eqref{gtbound}.
\begin{lemma}
	\label{lem:est_facet_terms}
	Let $F\in \mathring{\mathcal{T}}^{N-1}_h$, and let $T_1$, $T_2\in \mathcal{T}^N_h$ be such that $F=T_1\cap T_2$. Let $\rho \in H^2_0 S^0_2(\Omega)$.  There holds
	\begin{align}
		\|\llbracket \sff(\gt)\rrbracket\|_{L^{\infty}(F,\gt)}&\le C \|\llbracket g_h-g\rrbracket\|_{W^{1,\infty}(F)}\le C\big(\|g_h-g\|_{W^{1,\infty}(T_1)}+\|g_h-g\|_{W^{1,\infty}(T_2)}\big),\\
		\|\llbracket (\nabla_{\gt,n_{\gt}}\rho)|_F\rrbracket\|_{L^2(F,\gt)}&\le C\big( \|\llbracket g_h-g\rrbracket\|_{L^{\infty}(F)}\|\nabla_{\delta}\rho\|_{L^2(F)}+\|\llbracket g_h-g\rrbracket\|_{W^{1,\infty}(F)}\|\rho\|_{L^2(F)}\big)\nonumber\\
		&\le C\big( \|\nabla_{\delta} \rho\|_{L^2(F)}\sum_{i=1}^2\|g_h-g\|_{L^{\infty}(T_i)}+ \|\rho\|_{L^2(F)}\sum_{i=1}^2\|g_h-g\|_{W^{1,\infty}(T_i)}\big).
	\end{align}
	If $g_h$ is piecewise constant, then $\|g_h-g\|_{W^{1,\infty}}$ can be replaced by $\|g_h-g\|_{L^{\infty}}$ in both estimates.
\end{lemma}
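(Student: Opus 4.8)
The plan is to prove both estimates by the same two-part recipe: first exploit the smoothness of $g$ to subtract off a single-valued quantity, so that each jump becomes a jump of a \emph{difference} involving only $g_h-g$; then use that the geometric objects in question (the second fundamental form $\sff$, the unit normal $n_{\gt}$, and the Christoffel symbols of $\gt$) are smooth functions of the $1$-jet of the metric, with all coefficients controlled uniformly in $h$ and $t$ by the positive-definiteness and regularity bounds in~\eqref{gtbound}. Throughout, the equivalences~\eqref{Lpequiv2} let me freely convert the metric-dependent norms $\|\cdot\|_{L^p(F,\gt)}$ into the Euclidean norms appearing on the right-hand sides.

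For the first estimate I would fix Euclidean coordinates adapted to $F$ and write $\sff(\gt)$ on each side as $\mathcal{S}(\gt,\partial\gt)$, where $\mathcal{S}$ is a smooth function of the metric components and their first partial derivatives, rational in the components with denominators bounded away from zero thanks to $\|\gt^{-1}\|_{L^\infty}\le C$. Since $g$ is smooth, its second fundamental form is single-valued on $F$, so $\llbracket\sff(g)\rrbracket=0$ and hence $\llbracket\sff(\gt)\rrbracket=\llbracket\mathcal{S}(\gt,\partial\gt)-\mathcal{S}(g,\partial g)\rrbracket$. A mean-value bound for $\mathcal{S}$, with Lipschitz constant controlled by~\eqref{gtbound} and $\|g\|_{W^{1,\infty}(\Omega)}$, gives the pointwise estimate $|\sff(\gt)-\sff(g)|\le C(|g_h-g|+|\partial(g_h-g)|)$ on each $T_i$ (using $\gt-g=t(g_h-g)$ with $t\in[0,1]$). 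Summing the one-sided bounds and applying the trace inequality $\|\cdot\|_{W^{1,\infty}(F)}\le C\|\cdot\|_{W^{1,\infty}(T_i)}$ yields the element-norm form on the far right; the intermediate bound by $\|\llbracket g_h-g\rrbracket\|_{W^{1,\infty}(F)}$ follows by observing that the single-valued part of the metric $1$-jet (the tangential--tangential components and their tangential derivatives) cancels in the jump, so only jump data contributes. The piecewise-constant case is immediate: then $\partial(g_h-g)=-\partial g$ is single-valued, all first-derivative jumps vanish, and $W^{1,\infty}$ may be replaced by $L^\infty$.

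For the second estimate I would expand $(\nabla_{\gt,n_{\gt}}\rho)_{ij}=n_{\gt}^k\big(\partial_k\rho_{ij}-\Gamma^l_{ki}(\gt)\rho_{lj}-\Gamma^l_{kj}(\gt)\rho_{il}\big)$ and use crucially that $\rho\in H^2_0 S^0_2(\Omega)$ is a \emph{global} field, so $\rho$ and $\partial\rho$ are single-valued across $F$ in the trace sense ($\llbracket\rho\rrbracket=0$, $\llbracket\partial\rho\rrbracket=0$). Consequently the entire jump is carried by the geometric coefficients, $\llbracket(\nabla_{\gt,n_{\gt}}\rho)_{ij}\rrbracket=\llbracket n_{\gt}^k\rrbracket\,\partial_k\rho_{ij}-\llbracket n_{\gt}^k\Gamma^l_{ki}(\gt)\rrbracket\rho_{lj}-\llbracket n_{\gt}^k\Gamma^l_{kj}(\gt)\rrbracket\rho_{il}$. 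The unit normal $n_{\gt}$ is a smooth function of the metric \emph{values} alone (its components are $g^{Nj}/\sqrt{g^{NN}}$ in the adapted coordinates), and $g$ is smooth, so $|\llbracket n_{\gt}\rrbracket|\le C|\llbracket g_h-g\rrbracket|$; this term multiplies $\partial\rho$ and, after taking $L^2(F)$ norms and applying H\"older with the $L^\infty$ bound on the coefficient, produces the $\|\llbracket g_h-g\rrbracket\|_{L^\infty(F)}\|\nabla_\delta\rho\|_{L^2(F)}$ contribution. The product $n_{\gt}\Gamma(\gt)$ is a smooth function of the full metric $1$-jet with $\llbracket n_g\Gamma(g)\rrbracket=0$, so its jump is controlled by the jump of that $1$-jet, i.e.\ by $\|\llbracket g_h-g\rrbracket\|_{W^{1,\infty}(F)}$; this term multiplies $\rho$ and yields the $\|\llbracket g_h-g\rrbracket\|_{W^{1,\infty}(F)}\|\rho\|_{L^2(F)}$ contribution. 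The element-norm version and the piecewise-constant simplification then follow exactly as in the first estimate.

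The main obstacle will be the bookkeeping in the second and third steps: establishing that $\sff$, $n_{\gt}$, and $\Gamma(\gt)$ are Lipschitz functions of the metric $1$-jet with constants that are \emph{uniform} in $h$ and $t$ (this is precisely where the positive-definiteness and $W^{2,\infty}$ bounds of~\eqref{gtbound} are indispensable, to keep the inverse-metric factors under control), and carefully attributing each term to the correct Sobolev order: value-jumps of the metric ($L^\infty$) when the coefficient multiplies $\nabla\rho$, versus first-derivative-jumps of the metric ($W^{1,\infty}$) when it multiplies $\rho$. Handling the normal-derivative jumps of the metric, which are not captured by $\|\llbracket g_h-g\rrbracket\|_{W^{1,\infty}(F)}$ but are controlled by the element-wise $W^{1,\infty}$-norms via trace, is the one place where the two displayed inequalities in each estimate genuinely differ.
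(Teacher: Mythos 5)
Your proposal is correct and takes essentially the same route as the paper. For the second estimate, the paper likewise works in Euclidean coordinates, uses that $\rho$ and $\partial_l\rho$ are single-valued across $F$ so that the entire jump falls on $n_{\gt}$ and the Christoffel symbols $\tilde{\Gamma}$ of $\gt$, invokes $\|\llbracket n_{\gt}\rrbracket\|_{L^\infty(F)}\le C\|\llbracket g_h-g\rrbracket\|_{L^\infty(F)}$ and $\|\llbracket \tilde{\Gamma}\rrbracket\|_{L^\infty(F)}\le C\|\llbracket g_h-g\rrbracket\|_{W^{1,\infty}(F)}$ (imported from the scalar-curvature paper), and pairs the normal-jump with $\nabla_\delta\rho$ and the Christoffel-jump with $\rho$, exactly as you do; the only cosmetic difference is that the paper splits the product jump via $\llbracket ab\rrbracket=\llbracket a\rrbracket\{b\}+\{a\}\llbracket b\rrbracket$, where you instead bound the jump of $n_{\gt}\tilde{\Gamma}$ directly through its Lipschitz dependence on the metric $1$-jet. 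For the first estimate the paper simply cites the proof of Lemma~4.9 in \cite{gawlik2023scalar}, whose mean-value argument is the one you reconstruct.

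One caution: your closing claim that normal-derivative jumps of the metric ``are not captured by $\|\llbracket g_h-g\rrbracket\|_{W^{1,\infty}(F)}$'' contradicts both the paper's usage and your own main argument. In this notation, the $W^{1,\infty}(F)$-norm of the jump measures the jumps across $F$ of $g_h-g$ \emph{and of all of its first partial derivatives} (the full $1$-jet jump, traced from the two sides); this is precisely why $\|\llbracket \tilde{\Gamma}\rrbracket\|_{L^\infty(F)}\le C\|\llbracket g_h-g\rrbracket\|_{W^{1,\infty}(F)}$ and your intermediate bound for $\llbracket\sff(\gt)\rrbracket$ are meaningful at all, since $\sff$ and $\tilde{\Gamma}$ involve normal derivatives of the metric. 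Under the reading in your last paragraph (tangential derivatives of the jump function only), the first inequality in each display would not follow from your argument. The two displayed inequalities in each estimate differ only by the elementary step of bounding one-sided traces on $F$ by element-wise norms on $T_1$ and $T_2$, not by which derivative jumps they see.
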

\begin{proof}
	The first statement was proved in the proof of \cite[Lemma 4.9]{gawlik2023scalar}. For the second we proceed similarly by using the fact that $\llbracket ab\rrbracket = \llbracket a\rrbracket\{b\}+\{a\}\llbracket b \rrbracket$, where $\{\cdot\}$ denotes the average across $F$.  In Euclidean coordinates, the components of $\nabla_{\gt,n_{\gt}}\rho$ satisfy
	\begin{align*}
		\|\llbracket (\nabla_{\gt,n_{\gt}}\rho)_{ij} \rrbracket&\|_{L^2(F,\gt)} 
		\le C \|\llbracket (\nabla_{\gt,n_{\gt}}\rho)_{ij} \rrbracket\|_{L^2(F)} \\
		&= C\|\llbracket (\partial_l\rho_{ij}-\tilde{\Gamma}_{li}^k\rho_{kj}-\tilde{\Gamma}_{lj}^k\rho_{ik})n_{\gt}^l\rrbracket\|_{L^2(F)}\\
		&\le C ( \|\partial _l\rho_{ij}\llbracket n_{\gt}^l\rrbracket\|_{L^2(F)} +  \|\rho\|_{L^2(F)}(\|\llbracket \tilde{\Gamma}\rrbracket\|_{L^{\infty}(F)}\|\{ n_{\gt}\}\|_{L^{\infty}(F)} + \|\{ \tilde{\Gamma} \}\|_{L^{\infty}(F)}\|\llbracket n_{\gt}\rrbracket\|_{L^{\infty}(F)})\\
		&\le C\big( \|\nabla_{\delta}\rho\|_{L^2(F)}\|\llbracket n_{\gt}\rrbracket\|_{L^{\infty}(F)}+ \|\rho\|_{L^2(F)}(\|\llbracket g_h-g\rrbracket\|_{W^{1,\infty}(F)} + \|\llbracket n_{\gt}\rrbracket\|_{L^{\infty}(F)})\big)\\
		&\le C\big(\|\nabla_{\delta} \rho\|_{L^2(F)}\|\llbracket g_h-g\rrbracket\|_{L^{\infty}(F)} + \|\rho\|_{L^2(F)}\|\llbracket g_h-g \rrbracket\|_{W^{1,\infty}(F)}\big)\\
		&\le C\big( \|\nabla_{\delta} \rho\|_{L^2(F)}\sum_{k=1}^2\|g_h-g\|_{L^{\infty}(T_k)}+ \|\rho\|_{L^2(F)}\sum_{k=1}^2\|g_h-g\|_{W^{1,\infty}(T_k)}\big).
	\end{align*}
	Here, we used the notation $\tilde{\Gamma}^k_{ij}$ for the Christoffel symbols of the second kind associated with $\gt$, and we used the following two inequalities that follow from~\cite[Equation (43)]{gawlik2023scalar} and~\cite[Lemma 4.6]{gawlik2023scalar}, respectively:
	\begin{align}
		\|\llbracket \tilde{\Gamma} \rrbracket\|_{L^{\infty}(F)} &\leq C \|\llbracket g_h-g\rrbracket\|_{W^{1,\infty}(F)}, \\
		\|\llbracket n_{\gt}\rrbracket\|_{L^{\infty}(F)} &\le C\|\llbracket \gt \rrbracket\|_{L^\infty(F)} =  C\|\llbracket \gt-g \rrbracket\|_{L^\infty(F)} \le C\|\llbracket g_h-g \rrbracket\|_{L^\infty(F)}. \label{njump}
	\end{align} 
	
	If $g_h$ is piecewise constant there holds $\|\llbracket g_h-g\rrbracket\|_{W^{1,\infty}(F)}=\|\llbracket g_h-g\rrbracket\|_{L^{\infty}(F)}$.
\end{proof}
\noindent
We define the following mesh-dependent norms:
\begin{align}
	\nrm{\sigma}^2_2:=\sum_{T}\big(\|\sigma\|^2_{L^2(T)}+h_T^2 |\sigma|_{H^1(T)}^2\big),\qquad \nrmt{\sigma}^2_2:=\sum_{T}\big(\|\sigma\|^2_{L^2(T)}+h_T^2 |\sigma|_{H^1(T)}^2+h_T^4|\sigma|_{H^2(T)}^2\big).
\end{align}
We write, e.g., $\nrm{\sigma}_{2,T_i}$ if only element $T_i\in\mathcal{T}_h^N$ is considered in the sum.

\subsection{Convergence of the bilinear form $B_h$}
We investigate the convergence of the bilinear form $B_h(\gt;\cdot,\cdot)$ defined in~\eqref{Bh}.  Let  $\rho$ be an arbitrary member of $H^2_0 S^0_2(\Omega)$, and let $\sigma = g_h-g$.  We can use the symmetry of $B_h(\gt;\cdot,\cdot)$ to write
\begin{align}
	&B_h(\gt; \sigma, \rho)  = \sum_T \int_T \langle  \ein_{\gt} \rho, \sigma \rangle_{\gt} \omega_T(\gt) \nonumber \\ 
	&\quad + \mathring{\sum_F} \int_F \frac{1}{2}\langle \llbracket \rho(n_{\gt},n_{\gt})\overline{\sff}(\gt) + \mathbb{S}_{F,\gt}(\nabla_{\gt,n_{\gt}} \rho - 2(\nabla_{F,\gt}\rho)(n_{\gt},\cdot)) \rrbracket, \sigma|_F \rangle_{\gt} \omega_F(\gt) \nonumber \\
	&\quad + \mathring{\sum_F} \int_F \frac{1}{2}\bigg( \langle \rho|_F, \llbracket \sff(\gt) \rrbracket \rangle_{\gt} \Tr_{\gt}(\sigma|_F) -(\rho|_F) : \llbracket \sff(\gt) \rrbracket : (\sigma|_F) \bigg) \omega_F(\gt) \nonumber \\
	&\quad - \mathring{\sum_S} \int_S \frac{1}{2} \sum_{F \supset S} \llbracket \rho(n_{\gt},\nu_{\gt}) \rrbracket_F \Tr_{\gt}(\sigma|_S) \omega_S(\gt). \label{Bh2} 
\end{align}
Notice that the sums over codimension-1 simplices $F$ and codimension-2 simplices $S$ appearing above are sums over interior simplices (recall Remark~\ref{remark:ringsum}), owing to the fact that $\rho$ and its first derivatives vanish on $\partial\Omega$.

In the following we will estimate the codimension-0, codimension-1, and codimension-2 terms separately.
\begin{lemma}
	\label{lemma:Bh_vol_bound}
	There holds
	\begin{align*}
		\left|\sum_T \int_T \langle  \ein_{\gt} \rho, \sigma \rangle_{\gt} \omega_T(\gt)\right| \le C \|g_h-g\|_{L^2(\Omega)}\|\rho\|_{H^2(\Omega)}.
	\end{align*}
\end{lemma}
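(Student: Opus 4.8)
The plan is to exploit the fact that $\ein_{\gt}$ is a second-order linear differential operator whose coefficients, when expressed in Euclidean coordinates, depend only on $\gt$ and its derivatives up to second order. Writing $\ein = J\df\dv J - \tfrac{1}{2}\Delta J$ as in~\eqref{linein} and recalling that $J$ is algebraic in $\gt$ (involving only the metric and its inverse), while $\df$, $\dv$, and $\Delta$ are first- or second-order covariant differential operators, a term-by-term expansion in Euclidean coordinates yields a pointwise representation of $\ein_{\gt}\rho$ of the schematic form
\begin{equation*}
	\ein_{\gt}\rho = a(\gt)\,\partial^2\rho + b(\gt,\partial\gt)\,\partial\rho + c(\gt,\partial\gt,\partial^2\gt)\,\rho,
\end{equation*}
where $a$ is a rational expression in the entries of $\gt$ and $\gt^{-1}$, $b$ additionally involves the Christoffel symbols of $\gt$ (hence $\partial\gt$), and $c$ involves derivatives of the Christoffel symbols (hence $\partial^2\gt$). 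No derivatives of $\gt$ of order higher than two appear, since $\ein_{\gt}$ differentiates $\rho$ at most twice and the highest-order metric data entering a second-order covariant operator are the first derivatives of the Christoffel symbols.

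Next I would invoke the uniform bounds~\eqref{gtbound}, which guarantee that $\|\gt\|_{L^\infty(\Omega)}$, $\|\gt^{-1}\|_{L^\infty(\Omega)}$, $\max_T|\gt|_{W^{1,\infty}(T)}$, and $\max_T|\gt|_{W^{2,\infty}(T)}$ are all bounded by a constant $C$ independent of $h$ and $t$. Consequently the coefficients $a$, $b$, $c$ are bounded in $L^\infty(T)$ uniformly, so on each $T \in \mathcal{T}_h^N$ we obtain the pointwise estimate
\begin{equation*}
	|\ein_{\gt}\rho|_\delta \le C\big(|\rho| + |\nabla_\delta\rho| + |\nabla_\delta^2\rho|\big),
\end{equation*}
and therefore $\|\ein_{\gt}\rho\|_{L^2(T)} \le C\|\rho\|_{H^2(T)}$.

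To pass from the metric-dependent integrand to Euclidean quantities, I would use that $|\langle A,B\rangle_{\gt}| \le |A|_{\gt}|B|_{\gt} \le C|A|_\delta|B|_\delta$ (owing to $\|\gt^{-1}\|_{L^\infty(\Omega)} \le C$) together with $\omega_T(\gt) \le C\,dx$, equivalently the norm equivalence~\eqref{Lpequiv2}. Applying the Cauchy--Schwarz inequality on each element then gives
\begin{equation*}
	\left|\int_T \langle\ein_{\gt}\rho,\sigma\rangle_{\gt}\,\omega_T(\gt)\right| \le C\|\ein_{\gt}\rho\|_{L^2(T)}\|\sigma\|_{L^2(T)} \le C\|\rho\|_{H^2(T)}\|\sigma\|_{L^2(T)}.
\end{equation*}
Summing over $T$ and using the discrete Cauchy--Schwarz inequality yields the bound $C\big(\sum_T\|\rho\|_{H^2(T)}^2\big)^{1/2}\big(\sum_T\|\sigma\|_{L^2(T)}^2\big)^{1/2} = C\|\rho\|_{H^2(\Omega)}\|\sigma\|_{L^2(\Omega)}$, which is the claim upon recalling $\sigma = g_h-g$.

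The main obstacle is the bookkeeping in the first step: one must verify carefully that expanding $\ein_{\gt}\rho$ in Euclidean coordinates introduces no metric derivatives beyond second order and that every coefficient is controlled by $\|\gt\|_{W^{2,\infty}(T)}$ and $\|\gt^{-1}\|_{L^\infty(T)}$. This amounts to a degree count on the covariant operator $J\df\dv J - \tfrac{1}{2}\Delta J$, which requires tracking how the Christoffel symbols and their first derivatives enter the iterated covariant derivatives; once this is settled, the remaining estimates are routine applications of Cauchy--Schwarz and the norm equivalences already recorded in~\eqref{gtbound} and~\eqref{Lpequiv2}.
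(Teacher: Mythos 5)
Your proposal is correct and follows essentially the same route as the paper's proof: both reduce the claim to the elementwise bound $\|\ein_{\gt}\rho\|_{L^2(T)} \le C\|\rho\|_{H^2(T)}$ via the uniform metric bounds~\eqref{gtbound}, then apply Cauchy--Schwarz on each element together with the norm equivalence~\eqref{Lpequiv2} and sum over $T$. The paper simply states that the elementwise bound ``follows immediately'' from~\eqref{gtbound}, whereas you spell out the coefficient bookkeeping behind it; the substance is identical.
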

\begin{proof}
	This follows immediately from~\eqref{gtbound}.  It implies that
	\[
	\|\ein_{\gt} \rho\|_{L^2(T)}\le C \|\rho\|_{H^2(T)}
	\]
	and thus
	\begin{align*}
		\left| \int_T \langle  \ein_{\gt} \rho, \sigma \rangle_{\gt} \omega_T(\gt)\right|\leq \|\ein_{\gt} \rho\|_{L^2(T,\gt)}\|\sigma\|_{L^2(T,\gt)}\leq C\|\rho\|_{H^2(T)}\|\sigma\|_{L^2(T)}=C\|\rho\|_{H^2(T)}\|g_h-g\|_{L^2(T)}.
	\end{align*}
	Summing over all $T\in \mathcal{T}_h^N$ finishes the proof.
\end{proof}

\begin{lemma}
	\label{lemma:Bh_bnd_bound}
	There holds
	\begin{align}
		&\left|\mathring{\sum_F} \int_F \langle\llbracket\mathbb{S}_{F,\gt}(\nabla_{\gt,n_{\gt}} \rho - 2(\nabla_{F,\gt}\rho)(n_{\gt},\cdot)) \rrbracket, \sigma|_F \rangle_{\gt} \omega_F(\gt)\right| \le \nonumber\\
		&\qquad C\Big(\max_T \left( h_T^{-1} \|g_h-g\|_{W^{1,\infty}(T)}\right)\nrm{\rho}_2 + \max_T \left( h_T^{-1} \|g_h-g\|_{L^\infty(T)}\right)\nrm{\nabla_{\delta}\rho}_2\Big) \nrm{g_h-g}_2  		\label{eq:bh_bnd_term1}
	\end{align}
	and
	\begin{align}
		&\left|\mathring{\sum_F} \int_F \Big(\langle\llbracket\rho(n_{\gt},n_{\gt})\overline{\sff}(\gt) \rrbracket, \sigma|_F \rangle_{\gt}+\langle \rho|_F, \llbracket \sff(\gt) \rrbracket \rangle_{\gt} \Tr_{\gt}(\sigma|_F) -(\rho|_F) : \llbracket \sff(\gt) \rrbracket : (\sigma|_F)\Big) \omega_F(\gt)\right|\nonumber \\
		&\qquad\le C\max_T \left( h_T^{-1} \|g_h-g\|_{W^{1,\infty}(T)}\right) \nrm{g_h-g}_2 \nrm{\rho}_2.\label{eq:bh_bnd_term2}
	\end{align}
	If $g_h$ is piecewise constant, then $\|g_h-g\|_{W^{1,\infty}(T)}$ can be replaced by $\|g_h-g\|_{L^{\infty}(T)}$ in \eqref{eq:bh_bnd_term1}-\eqref{eq:bh_bnd_term2}.
\end{lemma}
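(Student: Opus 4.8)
The plan is to estimate each facet integral face by face with the Cauchy--Schwarz inequality, splitting the integrand into a ``jump factor'' controlled by Lemma~\ref{lem:est_facet_terms} and a ``trace factor'' controlled by a scaled trace inequality, and then to reassemble the resulting products of element-wise mesh norms into $\nrm{g_h-g}_2$, $\nrm{\rho}_2$, and $\nrm{\nabla_{\delta}\rho}_2$ by summing over faces. Throughout I would use the norm equivalences~\eqref{Lpequiv1}--\eqref{Lpequiv2} to pass freely between the $\gt$-weighted and Euclidean $L^2(F)$-norms. Two further ingredients are used repeatedly. First, for any $\phi$ and any face $F\subset\partial T$, the scaled trace inequality $\|\phi\|_{L^2(F)}^2\le C(h_T^{-1}\|\phi\|_{L^2(T)}^2+h_T|\phi|_{H^1(T)}^2)=Ch_T^{-1}\nrm{\phi}_{2,T}^2$ yields $\|\sigma|_F\|_{L^2(F)}$, $\|\rho|_F\|_{L^2(F)}\le Ch_T^{-1/2}\nrm{\rho}_{2,T}$, and $\|\nabla_{\delta}\rho\|_{L^2(F)}\le Ch_T^{-1/2}\nrm{\nabla_{\delta}\rho}_{2,T}$, applied on whichever adjacent element is convenient. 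Second, shape-regularity ensures each element carries at most $N+1$ faces and each interior face is shared by exactly two elements, so after pulling out the maxima $\max_T(h_T^{-1}\|g_h-g\|_{W^{1,\infty}(T)})$ and $\max_T(h_T^{-1}\|g_h-g\|_{L^\infty(T)})$, a final Cauchy--Schwarz over $F$ converts $\mathring{\sum}_F \nrm{\cdot}_{2,T_1}\nrm{\cdot}_{2,T_2}$ into $C\nrm{\cdot}_2\nrm{\cdot}_2$.

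For~\eqref{eq:bh_bnd_term1} I would first observe that, because $\gt|_F$ and $\sigma|_F=(g_h-g)|_F$ are single-valued on $F$, the operator $\mathbb{S}_{F,\gt}$ has single-valued coefficient fields, so the jump commutes with it and $\|\llbracket\mathbb{S}_{F,\gt}(\nabla_{\gt,n_{\gt}}\rho-2(\nabla_{F,\gt}\rho)(n_{\gt},\cdot))\rrbracket\|_{L^2(F)}\le C\|\llbracket\nabla_{\gt,n_{\gt}}\rho-2(\nabla_{F,\gt}\rho)(n_{\gt},\cdot)\rrbracket\|_{L^2(F)}$. The normal-derivative piece is bounded directly by the second estimate of Lemma~\ref{lem:est_facet_terms}. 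The tangential piece $(\nabla_{F,\gt}\rho)(n_{\gt},\cdot)$ requires an analogous jump bound, which I would obtain by repeating the argument in the proof of Lemma~\ref{lem:est_facet_terms}: expand $\nabla_{\gt}\rho$ in Euclidean coordinates as $\nabla_{\delta}\rho$ (single-valued, since $\rho\in H^2_0 S^0_2(\Omega)$) minus Christoffel corrections, and apply $\llbracket ab\rrbracket=\llbracket a\rrbracket\{b\}+\{a\}\llbracket b\rrbracket$ together with~\eqref{njump} and the Christoffel jump bound. This produces a bound of the same shape, with $\|\nabla_{\delta}\rho\|_{L^2(F)}$ accompanied by $\sum_i\|g_h-g\|_{L^\infty(T_i)}$ and $\|\rho\|_{L^2(F)}$ accompanied by $\sum_i\|g_h-g\|_{W^{1,\infty}(T_i)}$. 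Pairing this jump factor against $\|\sigma|_F\|_{L^2(F)}$ and invoking the trace inequalities above yields, on each face, the bound $Ch_T^{-1}(\|g_h-g\|_{W^{1,\infty}}\nrm{\rho}_{2,T}+\|g_h-g\|_{L^\infty}\nrm{\nabla_{\delta}\rho}_{2,T})\nrm{\sigma}_{2,T}$, and summing as above produces~\eqref{eq:bh_bnd_term1}. I expect this extension of Lemma~\ref{lem:est_facet_terms} to the tangential surface-gradient term to be the main obstacle, since it is the only place where a genuinely new (though structurally identical) jump estimate must be established.

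For~\eqref{eq:bh_bnd_term2} the jumps reside essentially in the second fundamental form, so the analysis is more direct. In the first summand I would expand $\llbracket\rho(n_{\gt},n_{\gt})\overline{\sff}(\gt)\rrbracket=\llbracket\rho(n_{\gt},n_{\gt})\rrbracket\{\overline{\sff}(\gt)\}+\{\rho(n_{\gt},n_{\gt})\}\llbracket\overline{\sff}(\gt)\rrbracket$; the averages $\{\overline{\sff}(\gt)\}$ and $\{\rho(n_{\gt},n_{\gt})\}$ are $O(1)$ by~\eqref{gtbound}, the jump $\llbracket\rho(n_{\gt},n_{\gt})\rrbracket$ is controlled by $\|\llbracket n_{\gt}\rrbracket\|_{L^\infty(F)}\le C\|g_h-g\|_{L^\infty(F)}$ via~\eqref{njump}, and $\|\llbracket\overline{\sff}(\gt)\rrbracket\|_{L^\infty(F)}\le C\|g_h-g\|_{W^{1,\infty}}$ by the first estimate of Lemma~\ref{lem:est_facet_terms}; since $\|g_h-g\|_{L^\infty}\le\|g_h-g\|_{W^{1,\infty}}$, both contributions are absorbed into a single $\|g_h-g\|_{W^{1,\infty}}$. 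The remaining two summands involve only single-valued factors ($\rho|_F$, $\sigma|_F$, $\Tr_{\gt}(\sigma|_F)$) multiplied against $\llbracket\sff(\gt)\rrbracket$, so Lemma~\ref{lem:est_facet_terms} applies immediately. In every case the facet integrand is bounded pointwise by $C\|g_h-g\|_{W^{1,\infty}}|\rho||\sigma|$, and Cauchy--Schwarz with the trace inequalities gives the face-wise bound $Ch_T^{-1}\|g_h-g\|_{W^{1,\infty}}\nrm{\rho}_{2,T}\nrm{\sigma}_{2,T}$; summing yields~\eqref{eq:bh_bnd_term2}. Finally, the piecewise-constant assertion follows because in that case the Christoffel jump bound and the first estimate of Lemma~\ref{lem:est_facet_terms} hold with $\|g_h-g\|_{W^{1,\infty}}$ replaced by $\|g_h-g\|_{L^\infty}$, and this replacement propagates verbatim through both arguments.
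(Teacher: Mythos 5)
Your proposal is correct and follows essentially the same route as the paper's proof: face-by-face Cauchy--Schwarz, the jump bounds of Lemma~\ref{lem:est_facet_terms} together with~\eqref{njump} and the Christoffel jump estimate (extended to the tangential term $(\nabla_{F,\gt}\rho)(n_{\gt},\cdot)$ by the same expansion, a step the paper dismisses as ``analogous''), scaled trace inequalities, and shape-regular summation over faces. The only cosmetic difference is that you bound $\mathbb{S}_{F,\gt}$ directly as a uniformly bounded pointwise operator after commuting it with the jump, whereas the paper additionally moves it onto $\sigma$ via its pointwise self-adjointness; both steps are equally valid.
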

\begin{proof}
	Consider a face $F$ shared by two $N$-simplices $T_1$ and $T_2$.  Notice that 
	\[
	\langle \llbracket \mathbb{S}_{F,\gt}\nabla_{\gt,n_{\gt}} \rho \rrbracket, \sigma|_F \rangle_{\gt} = \langle \mathbb{S}_{F,\gt} \llbracket \nabla_{\gt,n_{\gt}} \rho \rrbracket, \sigma|_F \rangle_{\gt} = \langle \llbracket (\nabla_{\gt,n_{\gt}} \rho)|_F \rrbracket, \mathbb{S}_{F,\gt} \sigma \rangle_{\gt} .
	\]
	Thus, for the first term in \eqref{eq:bh_bnd_term1}, we can use the bound $\|\mathbb{S}_{F,\gt} \sigma\|_{L^2(F,\gt)} \le C \|\sigma|_F\|_{L^2(F,\gt)}$ together with the trace inequality
	\begin{align}
		\label{eq:trace_inequ}
		\|\rho\|^2_{L^2(F)}\leq C(h_{T_1}^{-1}\|\rho\|^2_{L^2(T_1)}+h_{T_1}|\rho|^2_{H^1(T_1)})
	\end{align}
	and Lemma~\ref{lem:est_facet_terms} to write
	\begin{align*}
		&\left|\int_F \langle \llbracket \mathbb{S}_{F,\gt}\nabla_{\gt,n_{\gt}} \rho \rrbracket, \sigma|_F \rangle_{\gt}\, \omega_F(\gt)\right| \\
		&\qquad\le  C\|\llbracket(\nabla_{\gt,n_{\gt}} \rho)|_F \rrbracket\|_{L^2(F,\gt)}\|\sigma|_F\|_{L^2(F,\gt)}\\ 
		&\qquad\le C\Big(\sum_{i=1}^2\|g_h-g\|_{W^{1,\infty}(T_i)}\|\rho\|_{L^2(F)}+\sum_{i=1}^2\|g_h-g\|_{L^{\infty}(T_i)}\|\nabla_{\delta}\rho\|_{L^2(F)}\Big)\|\sigma|_F\|_{L^2(F)}\\
		&\qquad\le Ch_{T_1}^{-1}\Big(\sum_{i=1}^2\|g_h-g\|_{W^{1,\infty}(T_i)}\nrm{\rho}_{2,T_1}+\sum_{i=1}^2\|g_h-g\|_{L^{\infty}(T_i)}\nrm{\nabla_{\delta}\rho}_{2,T_1}\Big)\nrm{\sigma}_{2,T_1}.
	\end{align*}
	By the shape-regularity of $\mathcal{T}_h$, we have $C^{-1} \le h_{T_1}/h_{T_2} \le C$ for some constant $C$ independent of $h$ and $F$, so
	\begin{align*}
		\Big|\mathring{\sum_{F}}\int_F &\langle\llbracket \mathbb{S}_{F,\gt}\nabla_{\gt,n_{\gt}} \rho \rrbracket, \sigma|_F  \rangle_{\gt} \omega_F(\gt)\Big| \\ &\le C\big(\max_T \left( h_T^{-1} \|g_h-g\|_{W^{1,\infty}(T)}\right)\nrm{\rho}_2 + \max_T  \left( h_T^{-1} \|g_h-g\|_{L^{\infty}(T)}\right)\nrm{\nabla_{\delta}\rho}_2 \big)
		\nrm{g_h-g}_2 .
	\end{align*}
	The other terms in \eqref{eq:bh_bnd_term1} follow analogously. For \eqref{eq:bh_bnd_term2} we observe that
	\[
	\llbracket\rho(n_{\gt},n_{\gt})\overline{\sff}(\gt) \rrbracket = \llbracket ( \rho, n_{\gt} \otimes n_{\gt} ) \overline{\sff}(\gt) \rrbracket = ( \rho, \llbracket n_{\gt} \otimes n_{\gt} \rrbracket ) \{ \overline{\sff}(\gt) \} + ( \rho, \{ n_{\gt} \otimes n_{\gt} \} ) \llbracket \overline{\sff}(\gt) \rrbracket,
	\] 
	where $(\cdot,\cdot)$ denotes the natural pairing between $(0,2)$-tensors and $(2,0)$-tensors.  Since $\llbracket n_{\gt} \otimes n_{\gt} \rrbracket = \llbracket n_{\gt} \rrbracket \otimes \{ n_{\gt} \} + \{ n_{\gt} \} \otimes \llbracket n_{\gt} \rrbracket$, we can use Lemma~\ref{lem:est_facet_terms},~\eqref{njump}, and the trace inequality to estimate
	\begin{align*}
		&\left| \int_F \Big(\langle\llbracket\rho(n_{\gt},n_{\gt})\overline{\sff}(\gt) \rrbracket, \sigma|_F \rangle_{\gt}+\langle \rho|_F, \llbracket \sff(\gt) \rrbracket \rangle_{\gt} \Tr_{\gt}(\sigma|_F) -(\rho|_F) : \llbracket \sff(\gt) \rrbracket : (\sigma|_F)\Big) \omega_F(\gt)\right|\\
		&\qquad\leq C\big( \|\llbracket n_{\gt} \rrbracket\|_{L^\infty(F,\gt)} +  \|\llbracket \overline{\sff}(\gt)\rrbracket\|_{L^{\infty}(F,\gt)}+\|\llbracket\sff(\gt)\rrbracket\|_{L^{\infty}(F,\gt)}\big)\|\rho\|_{L^2(F,\gt)}\|\sigma|_F\|_{L^2(F,\gt)}\\
		&\qquad\leq C h_{T_1}^{-1}\sum_{i=1}^2\|g_h-g\|_{W^{1,\infty}(T_i)}\nrm{\sigma}_{2,T_1} \nrm{\rho}_{2,T_1}.
	\end{align*}
	With the same shape-regularity  argument as before, we obtain
	\begin{align*}
		&\left| \mathring{\sum_{F}}\int_F \Big(\langle\llbracket\rho(n_{\gt},n_{\gt})\overline{\sff}(\gt) \rrbracket, \sigma|_F \rangle_{\gt}+\langle \rho|_F, \llbracket \sff(\gt) \rrbracket \rangle_{\gt} \Tr_{\gt}(\sigma|_F) -(\rho|_F) : \llbracket \sff(\gt) \rrbracket : (\sigma|_F)\Big) \omega_F(\gt)\right|\\
		&\qquad\le C\max_T \left( h_T^{-1} \|g_h-g\|_{W^{1,\infty}(T)}\right) \nrm{g_h-g}_2 \nrm{\rho}_2.
	\end{align*}
\end{proof}

\begin{lemma}
	\label{lemma:Bh_bbnd_bound}
	There holds
	\begin{align*}
		&\left|\mathring{\sum_S} \int_S \sum_{F \supset S} \llbracket \rho(n_{\gt},\nu_{\gt}) \rrbracket_F \Tr_{\gt}(\sigma|_S) \omega_S(\gt)\right|\le C\max_T\left(h^{-2}_T\|g_h-g\|_{L^{\infty}(T)}\right)\,\nrmt{\rho}_2\nrmt{g_h-g}_2.
	\end{align*}
\end{lemma}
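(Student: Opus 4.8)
The plan is to bound each summand $\int_S \llbracket \rho(n_{\gt},\nu_{\gt})\rrbracket_F\,\Tr_{\gt}(\sigma|_S)\,\omega_S(\gt)$ separately, reduce it to Euclidean volume norms by a single scaled codimension-2 trace inequality, and then reassemble the global estimate using shape-regularity; here $\sigma=g_h-g$. First I would exploit the fact that $\rho\in H^2_0 S^0_2(\Omega)$ is globally $H^2$, hence single-valued across every interior face $F$, and that the ring on the sum in~\eqref{Bh2} reflects the vanishing of $\rho$ and its first derivatives on $\partial\Omega$. Recalling the jump convention~\eqref{sigmajump} and that $\nu_{\gt}$ is single-valued on $F$ (being determined by the single-valued induced metric $\gt|_F$), bilinearity of $\rho$ gives $\llbracket \rho(n_{\gt},\nu_{\gt})\rrbracket_F=\rho(\llbracket n_{\gt}\rrbracket_F,\nu_{\gt})$, where $\llbracket n_{\gt}\rrbracket_F=n_{\gt}^++n_{\gt}^-$ is the jump in the two outward unit normals. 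By~\eqref{njump} we have $\|\llbracket n_{\gt}\rrbracket_F\|_{L^\infty(F)}\le C\|\llbracket g_h-g\rrbracket\|_{L^\infty(F)}$, and since the Euclidean length of $\nu_{\gt}$ is bounded by~\eqref{gtbound}, this yields the pointwise bound $|\llbracket \rho(n_{\gt},\nu_{\gt})\rrbracket_F|\le C\|\llbracket g_h-g\rrbracket\|_{L^\infty(F)}\,|\rho|$ on $S$. This is the crucial gain: the jump is $O(\|g_h-g\|_{L^\infty})$ rather than $O(1)$, because the only source of discontinuity in $\rho(n_{\gt},\nu_{\gt})$ is the metric-dependent normal.

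For the remaining factor, \eqref{gtbound} bounds $\gt^{-1}$ in $L^\infty$, whence $\|\Tr_{\gt}(\sigma|_S)\|_{L^2(S)}\le C\|\sigma|_S\|_{L^2(S)}$, while~\eqref{Lpequiv2} lets me replace $\omega_S(\gt)$ and the $\gt$-inner product by their Euclidean counterparts at the cost of a constant. A Cauchy-Schwarz inequality on $S$ then gives $\left|\int_S \llbracket \rho(n_{\gt},\nu_{\gt})\rrbracket_F\,\Tr_{\gt}(\sigma|_S)\,\omega_S(\gt)\right|\le C\|\llbracket g_h-g\rrbracket\|_{L^\infty(F)}\,\|\rho\|_{L^2(S)}\,\|\sigma|_S\|_{L^2(S)}$, and $\|\llbracket g_h-g\rrbracket\|_{L^\infty(F)}$ is controlled by $\|g_h-g\|_{L^\infty}$ on the two $N$-simplices meeting $F$, in the spirit of Lemma~\ref{lem:est_facet_terms}.

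The key step is to lift the codimension-2 $L^2(S)$-norms back to volume norms. For any $T\supset S$, a scaled trace inequality of codimension two (hence requiring $H^2(T)$-regularity, valid since $N\ge 3$) reads $\|u\|_{L^2(S)}^2\le C h_T^{-2}\big(\|u\|_{L^2(T)}^2+h_T^2|u|_{H^1(T)}^2+h_T^4|u|_{H^2(T)}^2\big)=C h_T^{-2}\nrmt{u}_{2,T}^2$, obtained by pulling back to a reference simplex, invoking the trace embedding $H^2(\hat T)\hookrightarrow L^2(\hat S)$, and rescaling. Applying this to both $\rho$ and $\sigma|_S$ componentwise in Euclidean coordinates (noting that $\sigma|_S$ is the pullback of the smooth tensor $\sigma|_T$) produces two factors of $h_T^{-1}$, i.e. $\|\rho\|_{L^2(S)}\|\sigma|_S\|_{L^2(S)}\le C h_T^{-2}\nrmt{\rho}_{2,T}\nrmt{\sigma}_{2,T}$. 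This is precisely where the $H^2$-based norm $\nrmt{\cdot}_2$ and the two powers of $h_T^{-1}$ originate; combined with the single power of $\|g_h-g\|_{L^\infty}$ from the first paragraph they assemble into the announced factor $h_T^{-2}\|g_h-g\|_{L^\infty(T)}$.

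Finally I would sum over all interior $S$ and all $F\supset S$. By shape-regularity the diameters of the simplices meeting a fixed $S$ are mutually comparable and their number is bounded, so each local estimate may be charged to a single incident $T$, the factor $h_T^{-2}\|g_h-g\|_{L^\infty(T)}$ extracted as $\max_T(\cdots)$, and the residual sum $\sum \nrmt{\rho}_{2,T}\nrmt{\sigma}_{2,T}$ bounded by $\nrmt{\rho}_2\,\nrmt{g_h-g}_2$ through a discrete Cauchy-Schwarz with finite overlap. I expect the only genuinely delicate point to be the bookkeeping in this last summation, namely charging the $L^\infty$-factor on $F$, the trace liftings on the several $T\supset S$, and the overlap counts consistently so that no simplex is over-counted; the codimension-2 trace inequality itself, though responsible for the $h^{-2}$ blow-up, is standard once the scaling is tracked.
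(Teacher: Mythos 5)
Your proposal is correct and follows essentially the same route as the paper: isolate the jump $\llbracket \rho(n_{\gt},\nu_{\gt})\rrbracket_F$ using the single-valuedness of $\rho$, control it by $\|\llbracket g_h-g\rrbracket\|_{L^\infty}$ via~\eqref{njump}, apply Cauchy--Schwarz on $S$, lift both $L^2(S)$-norms with the scaled codimension-2 trace inequality (producing the $h_T^{-2}$ factor), and sum using shape-regularity. The only cosmetic difference is that you invoke the single-valuedness of $\nu_{\gt}$ to drop its jump outright, whereas the paper retains the term $\rho(\{n_{\gt}\}_F,\llbracket \nu_{\gt}\rrbracket_F)$ and bounds $\llbracket \nu_{\gt}\rrbracket_F$ by $C\|\llbracket g_h-g\rrbracket_F\|_{L^\infty(S)}$ analogously to~\eqref{njump}; both are valid.
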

\begin{proof}
	By the shape regularity of $\mathcal{T}_h$, the number of faces attached to $S$ is bounded by a constant $C$ independent of $h$. 
	Using~\eqref{njump} and the analogous estimate $\|\llbracket \nu_{\gt} \rrbracket_F \|_{L^\infty(S)} \le C \|\llbracket g_h-g \rrbracket_F \|_{L^\infty(S)}$, we see that
	\begin{align*}
		\Big\| \sum_{F \supset S} \llbracket \rho(n_{\gt},\nu_{\gt}) \rrbracket_F\Big\|_{L^2(S)} 
		&= \Big\| \sum_{F \supset S} \rho(\llbracket n_{\gt} \rrbracket_F, \{\nu_{\gt}\}_F) + \rho(\{ n_{\gt} \}_F, \llbracket\nu_{\gt}\rrbracket_F) \Big\|_{L^2(S)} \\
		&\le C \sum_{F \supset S} \|\llbracket g_h-g\rrbracket_F\|_{L^{\infty}(S)}\|\rho\|_{L^2(S)}.
	\end{align*}
	Thus,
	\begin{align*}
		\left|\int_S \sum_{F \supset S} \llbracket \rho(n_{\gt},\nu_{\gt}) \rrbracket_F \Tr_{\gt}(\sigma|_S) \omega_S(\gt)\right|\leq C \sum_{F \supset S} \|\llbracket g_h-g\rrbracket_F\|_{L^{\infty}(S)}\|\rho\|_{L^2(S)}\|\sigma|_S\|_{L^2(S)}.
	\end{align*}
	With the codimension-2 trace inequality
	\[
	\|\rho\|_{L^2(S)}^2 \le C\left( h_{T}^{-2} \|\rho\|_{L^2(T)}^2 + |\rho|_{H^1(T)}^2 + h_{T}^2 |\rho|_{H^2(T)}^2 \right), \qquad S\subset T,
	\]
	we get
	\begin{align*}
		&\left|\mathring{\sum_{S}}\int_S \sum_{F \supset S} \llbracket \rho(n_{\gt},\nu_{\gt}) \rrbracket_F \Tr_{\gt}(\sigma|_S) \omega_S(\gt)\right|\leq C\max_T\|g_h-g\|_{L^{\infty}(T)} h_{T}^{-2}\nrmt{\rho}_2\nrmt{\sigma}_2.
	\end{align*}
	
\end{proof}

\noindent
Collecting our results, we can state a bound on the bilinear form $B_h(\gt;\cdot,\cdot)$.
\begin{proposition} \label{prop:bhbound}
	For every $h \le h_0$, every $t \in [0,1]$, and every $\rho \in  H^2_0 S^0_2(\Omega)$, we have (with $\sigma = g_h-g$)
	\begin{align*}
		|B_h(\gt; \sigma, \rho)| 
		&\le C \left( 1 + \max_T h_T^{-2} \|g_h-g\|_{L^\infty(T)} + \max_T h_T^{-1} |g_h-g|_{W^{1,\infty}(T)} \right)  \nrmt{g_h-g}_2 \nrmt{\rho}_2.
	\end{align*}
\end{proposition}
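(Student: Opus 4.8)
The plan is to read off the bound on $B_h(\gt;\sigma,\rho)$ (with $\sigma=g_h-g$) by adding the three estimates already established for its constituent pieces and then reorganizing the resulting coefficients into the single expression in the statement. I start from the symmetric representation~\eqref{Bh2}, which expresses $B_h(\gt;\sigma,\rho)$ as a codimension-0 integral containing $\langle\ein_{\gt}\rho,\sigma\rangle_{\gt}$, two families of codimension-1 integrals containing $\llbracket\sff(\gt)\rrbracket$, $\llbracket\overline{\sff}(\gt)\rrbracket$ and the normal and tangential derivatives of $\rho$, and a codimension-2 integral containing $\sum_{F\supset S}\llbracket\rho(n_{\gt},\nu_{\gt})\rrbracket_F$. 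It is essential to use this symmetric form rather than~\eqref{Bh}: placing $\ein_{\gt}$ on $\rho$ leaves $\sigma=g_h-g$ undifferentiated in the volume term, so that $g_h-g$ enters only through $\|g_h-g\|_{L^2}$ (which is controlled by $\nrmt{g_h-g}_2$), the price being two derivatives on $\rho$. Because $\rho$ and $\nabla_\delta\rho$ vanish on $\partial\Omega$, the facet and ridge sums run over interior simplices only, so no boundary terms arise.

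The three pieces are controlled by Lemma~\ref{lemma:Bh_vol_bound} (volume), Lemma~\ref{lemma:Bh_bnd_bound} (the two facet families, via~\eqref{eq:bh_bnd_term1} and~\eqref{eq:bh_bnd_term2}), and Lemma~\ref{lemma:Bh_bbnd_bound} (the ridge term). The volume piece contributes $C\|g_h-g\|_{L^2(\Omega)}\|\rho\|_{H^2(\Omega)}$, which accounts for the coefficient $1$ once $\|g_h-g\|_{L^2(\Omega)}\le\nrmt{g_h-g}_2$ is used; the ridge piece contributes $C\max_T(h_T^{-2}\|g_h-g\|_{L^\infty(T)})\,\nrmt{g_h-g}_2\,\nrmt{\rho}_2$, matching the $h_T^{-2}\|g_h-g\|_{L^\infty(T)}$ coefficient verbatim. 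The $\rho$-factor that emerges from the volume term is $\|\rho\|_{H^2(\Omega)}$, which dominates the sharper weighted norm $\nrmt{\rho}_2$ produced by the interface terms (indeed $\nrmt{\rho}_2\le\|\rho\|_{H^2(\Omega)}$). Since the estimate is only ever invoked after dividing by $\|\rho\|_{H^2(\Omega)}$ in~\eqref{2norm}, it is harmless to record the $\rho$-dependence through $\|\rho\|_{H^2(\Omega)}$; this is the form actually needed in~\eqref{integral_rep}.

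The genuine bookkeeping is in the facet terms, where three differently weighted seminorms must be reconciled. First, I split $\|g_h-g\|_{W^{1,\infty}}=\|g_h-g\|_{L^\infty}+|g_h-g|_{W^{1,\infty}}$; the seminorm part supplies the $h_T^{-1}|g_h-g|_{W^{1,\infty}(T)}$ coefficient directly, while the $L^\infty$ part is absorbed into $h_T^{-2}\|g_h-g\|_{L^\infty(T)}$ using $h_T^{-1}\le h_T^{-2}$ (valid once $h_0\le1$). The delicate term is the one in~\eqref{eq:bh_bnd_term1} carrying $\nrm{\nabla_\delta\rho}_2$ with coefficient $h_T^{-1}\|g_h-g\|_{L^\infty}$. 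Here I work on a single element and use the pointwise identity $\nrm{\nabla_\delta\rho}_{2,T}^2=|\rho|_{H^1(T)}^2+h_T^2|\rho|_{H^2(T)}^2\le h_T^{-2}\nrmt{\rho}_{2,T}^2$, i.e.\ $\nrm{\nabla_\delta\rho}_{2,T}\le h_T^{-1}\nrmt{\rho}_{2,T}$, to rewrite $h_T^{-1}\|g_h-g\|_{L^\infty(T)}\,\nrm{\nabla_\delta\rho}_{2,T}$ as $h_T^{-2}\|g_h-g\|_{L^\infty(T)}\,\nrmt{\rho}_{2,T}$. Only \emph{after} this elementwise conversion do I pull out $\max_T(\cdot)$ and apply Cauchy--Schwarz over $T$ (with $\nrm{\cdot}_2\le\nrmt{\cdot}_2$), so that this contribution also lands under the $h_T^{-2}\|g_h-g\|_{L^\infty(T)}$ coefficient.

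Assembling the three groups and invoking shape-regularity (bounded overlap of the element patches around each $F$ and $S$, and $h_{T_1}\simeq h_{T_2}$ across a shared face, both already used inside the three lemmas) yields the stated bound. I expect the main obstacle to be exactly this coefficient reconciliation---especially carrying out the gradient-term conversion at the element level before taking maxima, since performing it globally would spuriously couple $\max_T(h_T^{-1}\|g_h-g\|_{L^\infty})$ with $\max_T h_T^{-1}$ and needlessly inflate the constant---rather than any new analytic input, which is entirely supplied by the three preceding lemmas.
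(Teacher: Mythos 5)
Your proposal is correct and takes essentially the same route as the paper, whose entire proof is the instruction to combine Lemmas~\ref{lemma:Bh_vol_bound}, \ref{lemma:Bh_bnd_bound}, and~\ref{lemma:Bh_bbnd_bound}; the bookkeeping you supply (splitting $\|g_h-g\|_{W^{1,\infty}(T)}$ into $\|g_h-g\|_{L^\infty(T)}+|g_h-g|_{W^{1,\infty}(T)}$ with $h_T^{-1}\le Ch_T^{-2}$, and carrying out the conversion $\nrm{\nabla_\delta\rho}_{2,T}\le h_T^{-1}\nrmt{\rho}_{2,T}$ at the element/facet level \emph{before} taking maxima, so as not to couple $\max_T h_T^{-1}\|g_h-g\|_{L^\infty(T)}$ with $\max_T h_T^{-1}$ on non-quasi-uniform meshes) is exactly what the paper's one-line proof leaves implicit. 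Your one concession---recording the $\rho$-dependence through $\|\rho\|_{H^2(\Omega)}$ rather than the literal $\nrmt{\rho}_2$, because the codimension-0 estimate of Lemma~\ref{lemma:Bh_vol_bound} is itself stated with $\|\rho\|_{H^2(\Omega)}$---is an imprecision shared by the paper's own argument, and, as you note, it is harmless since the bound is only ever used after dividing by $\|\rho\|_{H^2(\Omega)}$ in~\eqref{2norm}.
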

\begin{proof}
	Combine Lemmas~\ref{lemma:Bh_vol_bound},~\ref{lemma:Bh_bnd_bound}, and~\ref{lemma:Bh_bbnd_bound}.
\end{proof}

\subsection{Convergence of the bilinear form $A_h$}
We investigate the convergence of the bilinear form $A_h(g;\cdot,\cdot)$ defined in \eqref{Ah}. We start with the codimension-0 terms.  As before, $\rho$ denotes an arbitrary member of $H^2_0 S^0_2(\Omega)$, and $\sigma = g_h-g$.

\begin{lemma}
	\label{lemma:Ah_vol_bound}
	There holds
	\begin{align*}
		\bigg|\sum_T\int_T \big( 2\sigma : \Riem(\gt) : \rho &+ \langle \Ric(\gt), \sigma \rangle_{\gt} \Tr_{\gt} \rho + \langle \Ric(\gt), \rho \rangle_{\gt} \Tr_{\gt} \sigma  \\
		&+ R(\gt) \langle J_{\gt}\sigma, \rho \rangle_{\gt} - 2\sigma:\Ric(\gt):\rho \big) \omega_T(\gt)\bigg|\leq C \|g_h-g\|_{L^2(\Omega)}\|\rho\|_{L^2(\Omega)}.
	\end{align*}
\end{lemma}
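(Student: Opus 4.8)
The plan is to exploit the fact that every term in the integrand is of zeroth order in both $\sigma$ and $\rho$: no derivatives fall on either argument, and the only dependence on $\gt$ enters through the curvature quantities $\Riem(\gt)$, $\Ric(\gt)$, $R(\gt)$ and through the metric contractions $\langle\cdot,\cdot\rangle_{\gt}$, $\Tr_{\gt}$, and $J_{\gt}$. Consequently the whole estimate reduces to controlling these curvature quantities in $L^\infty(T)$ uniformly in $h$ and $t$, after which a pointwise Cauchy--Schwarz inequality and a summation over $T$ finish the argument. Unlike the codimension-1 and codimension-2 estimates in Lemmas~\ref{lemma:Bh_bnd_bound} and~\ref{lemma:Bh_bbnd_bound}, no inverse powers of $h_T$ appear, which is precisely why the bound is of pure $L^2\times L^2$ type.

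First I would record that on each $T \in \mathcal{T}_h^N$ the restriction $\gt|_T$ is smooth, so that $\Riem(\gt)$, $\Ric(\gt)$, and $R(\gt)$ are classically defined there. These curvatures are expressed through the Christoffel symbols of $\gt$ and their first derivatives, hence are rational expressions in the entries of $\gt$, of $\gt^{-1}$, and of the first and second derivatives of $\gt$. The uniform bound~\eqref{gtbound} controls $\|\gt\|_{L^\infty(\Omega)}$, $\|\gt^{-1}\|_{L^\infty(\Omega)}$, $\max_T|\gt|_{W^{1,\infty}(T)}$, and $\max_T|\gt|_{W^{2,\infty}(T)}$ by a constant independent of $h$ and $t$, so I would deduce
\[
\|\Riem(\gt)\|_{L^\infty(T)} + \|\Ric(\gt)\|_{L^\infty(T)} + \|R(\gt)\|_{L^\infty(T)} \le C
\]
uniformly. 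The same bound controls $J_{\gt}\sigma$ pointwise by $C|\sigma|_{\gt}$ and controls the metric contractions, since $J_{\gt}$, $\Tr_{\gt}$, and $\langle\cdot,\cdot\rangle_{\gt}$ involve only $\gt$ and $\gt^{-1}$.

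With these bounds in hand, I would estimate the integrand pointwise. Each of the five summands is a contraction of at most two curvature factors (each bounded by $C$) with one copy of $\sigma$ and one copy of $\rho$, so its absolute value is at most $C|\sigma|_{\gt}|\rho|_{\gt}$ pointwise. Integrating over $T$ against $\omega_T(\gt)$ and applying Cauchy--Schwarz yields
\[
\left|\int_T (\cdots)\,\omega_T(\gt)\right| \le C\|\sigma\|_{L^2(T,\gt)}\|\rho\|_{L^2(T,\gt)} \le C\|\sigma\|_{L^2(T)}\|\rho\|_{L^2(T)},
\]
where the last step uses the norm equivalence~\eqref{Lpequiv2}. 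Summing over $T$ and applying the discrete Cauchy--Schwarz inequality gives $\sum_T\|\sigma\|_{L^2(T)}\|\rho\|_{L^2(T)}\le\|\sigma\|_{L^2(\Omega)}\|\rho\|_{L^2(\Omega)}$, and recalling $\sigma=g_h-g$ completes the estimate.

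The only substantive point — and the place I would be most careful — is the uniform $L^\infty$ bound on the curvatures. This follows from the $W^{2,\infty}$ control in~\eqref{gtbound} together with the bound on $\|\gt^{-1}\|_{L^\infty(\Omega)}$: the curvature quantities are rational expressions in the entries of $\gt$, $\partial\gt$, and $\partial^2\gt$ whose denominators are powers of $\det\gt$, and the latter is bounded away from zero uniformly because $\gt^{-1}$ is bounded. Everything else is routine Cauchy--Schwarz bookkeeping, so I do not anticipate any real obstacle beyond verifying that bound.
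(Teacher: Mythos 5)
Your proposal is correct and follows essentially the same route as the paper's proof: uniform $L^\infty$ bounds on $\Riem(\gt)$, $\Ric(\gt)$, and $R(\gt)$ deduced from~\eqref{gtbound}, then pointwise Cauchy--Schwarz, the norm equivalence~\eqref{Lpequiv2}, and summation over elements. The extra detail you supply (curvatures as rational expressions in $\gt$, $\gt^{-1}$, and derivatives, with denominators controlled by the bound on $\gt^{-1}$) is exactly the justification the paper leaves implicit, so there is nothing to correct.
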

\begin{proof}
	Since we assume that $\sup_{h>0} \max_{T \in \mathcal{T}_h^N} \|g_h\|_{W^{2,\infty}(T)} < \infty$ and $\lim_{h \to 0} \|g_h-g\|_{L^\infty(\Omega)}=0$, we see from~\eqref{gtbound} that the curvature quantities associated with $\gt$ satisfy
	\[
	\|\Riem(\gt)\|_{L^\infty(T)} \le C,\qquad \|\Ric(\gt)\|_{L^\infty(T)} \le C,\qquad \|R(\gt)\|_{L^\infty(T)} \le C
	\]
	for every $h \le h_0$, every $t \in [0,1]$, and every $T \in \mathcal{T}_h^N$.  It follows that
	\begin{align*}
		&\left| \int_T \big( 2\sigma : \Riem(\gt) : \rho + \langle \Ric(\gt), \sigma \rangle_{\gt} \Tr_{\gt} \rho + \langle \Ric(\gt), \rho \rangle_{\gt} \Tr_{\gt} \sigma  
		+ R(\gt) \langle J_{\gt}\sigma, \rho \rangle_{\gt} - 2\sigma:\Ric(\gt):\rho \big) \omega_T(\gt) \right|\\
		&\qquad\le C \|\sigma\|_{L^2(T,\gt)} \|\rho\|_{L^2(T,\gt)} \leq C \|g_h-g\|_{L^2(T)} \|\rho\|_{L^2(T)}.
	\end{align*}
	Summing over all $T \in \mathcal{T}_h^N$ completes the proof.
\end{proof}
Next, we consider the codimension-1 terms in \eqref{Ah}.
\begin{lemma}
	\label{lemma:Ah_bnd_bound}
	There holds
	\begin{align*}
		\bigg|\mathring{\sum_F}\int_F \big( -3 (\sigma|_F)& : \llbracket \overline{\sff}(\gt) \rrbracket : (\rho|_F) +  \langle \llbracket \overline{\sff}(\gt) \rrbracket, \sigma|_F \rangle_{\gt} \Tr_{\gt}(\rho|_F) + \Tr_{\gt}(\sigma|_F) \langle \llbracket \overline{\sff}(\gt) \rrbracket, \rho|_F \rangle_{\gt} \\
		&- \llbracket H(\gt) \rrbracket \langle \mathbb{S}_{F,\gt} \sigma, \rho|_F \rangle_{\gt} \big) \omega_F(\gt)\bigg|\le C\max_T \left( h_T^{-1} \|g_h-g\|_{W^{1,\infty}(T)}\right) \nrm{g_h-g}_2 \nrm{\rho}_2.
	\end{align*}
	If $g_h$ is piecewise constant, then $\|g_h-g\|_{W^{1,\infty}(T)}$ can be replaced by $\|g_h-g\|_{L^{\infty}(T)}$.
\end{lemma}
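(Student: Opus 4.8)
The plan is to exploit that all four terms in the integrand are bilinear in the tangential-tangential restrictions $\sigma|_F$ and $\rho|_F$, with coefficients furnished by the curvature jumps $\llbracket \overline{\sff}(\gt) \rrbracket$ and $\llbracket H(\gt) \rrbracket$. The fourth term involves $\mathbb{S}_{F,\gt}\sigma$, but by~\eqref{SF} this depends only on $\sigma|_F$ and obeys $\|\mathbb{S}_{F,\gt}\sigma\|_{L^2(F,\gt)} \le C\|\sigma|_F\|_{L^2(F,\gt)}$, so it fits the same mold. The argument then parallels the estimate~\eqref{eq:bh_bnd_term2} in Lemma~\ref{lemma:Bh_bnd_bound}: I would bound the curvature jumps in $L^\infty(F)$, extract the factor $\|\sigma|_F\|_{L^2(F)}\|\rho|_F\|_{L^2(F)}$ by Cauchy--Schwarz, convert these face norms into mesh-dependent element norms via the trace inequality~\eqref{eq:trace_inequ}, and finally sum over interior faces.

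The essential input is a uniform $L^\infty$ bound on the curvature jumps. Because both $g$ and $g_h$ have single-valued tangential-tangential components, so does the convex combination $\gt$, whence $\gt|_F$ is single-valued. This yields $\llbracket H(\gt) \rrbracket = \Tr_{\gt}(\llbracket \sff(\gt) \rrbracket)$ and $\llbracket \overline{\sff}(\gt) \rrbracket = \llbracket \sff(\gt) \rrbracket - \llbracket H(\gt) \rrbracket\,\gt|_F$, so that
\[
\|\llbracket \overline{\sff}(\gt) \rrbracket\|_{L^\infty(F,\gt)} + |\llbracket H(\gt) \rrbracket| \le C\|\llbracket \sff(\gt) \rrbracket\|_{L^\infty(F,\gt)} \le C\big(\|g_h-g\|_{W^{1,\infty}(T_1)} + \|g_h-g\|_{W^{1,\infty}(T_2)}\big)
\]
by the first estimate of Lemma~\ref{lem:est_facet_terms}, where $T_1$ and $T_2$ are the simplices meeting at $F$.

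With this bound I would estimate the integral over a single face $F$, using the norm-equivalence~\eqref{Lpequiv2} to move freely between $\langle\cdot,\cdot\rangle_{\gt}$, $\omega_F(\gt)$ and their Euclidean counterparts, by a constant times $\big(\|g_h-g\|_{W^{1,\infty}(T_1)} + \|g_h-g\|_{W^{1,\infty}(T_2)}\big)\,\|\sigma|_F\|_{L^2(F)}\,\|\rho|_F\|_{L^2(F)}$. Applying~\eqref{eq:trace_inequ} to each factor gives $\|\sigma|_F\|_{L^2(F)} \le C h_{T_1}^{-1/2}\nrm{\sigma}_{2,T_1}$, and likewise for $\rho$, so that a factor $h_{T_1}^{-1}$ emerges; shape-regularity, which makes $h_{T_1}$ and $h_{T_2}$ comparable, then lets me factor out $\max_T h_T^{-1}\|g_h-g\|_{W^{1,\infty}(T)}$. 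Summing over interior faces and using Cauchy--Schwarz, together with the fact that each element touches only a bounded number of faces, converts $\mathring{\sum_F} \nrm{\sigma}_{2,T_1}\nrm{\rho}_{2,T_1}$ into $C\nrm{\sigma}_2\nrm{\rho}_2 = C\nrm{g_h-g}_2\nrm{\rho}_2$, which is the asserted bound. The piecewise-constant refinement follows from the final sentence of Lemma~\ref{lem:est_facet_terms}.

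There is no deep obstacle; the lemma is essentially the $A_h$-counterpart of~\eqref{eq:bh_bnd_term2}. The only points that demand care are checking that the $\llbracket H(\gt)\rrbracket$-term collapses to the same $(\sigma|_F,\rho|_F)$-bilinear form and consistently tracking the switches between metric-dependent and Euclidean norms via~\eqref{Lpequiv2}.
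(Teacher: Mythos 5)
Your proposal is correct and follows essentially the same route as the paper's proof: bound the curvature jumps $\llbracket \overline{\sff}(\gt) \rrbracket$ and $\llbracket H(\gt) \rrbracket$ in $L^\infty(F)$ via the first estimate of Lemma~\ref{lem:est_facet_terms} (using $H=\Tr\sff$ and the single-valuedness of $\gt|_F$), apply Cauchy--Schwarz and the trace inequality~\eqref{eq:trace_inequ} to convert face norms into the mesh-dependent element norms, and conclude by shape-regularity and summation over interior faces, with the piecewise-constant refinement inherited from Lemma~\ref{lem:est_facet_terms}. The only difference is presentational: you spell out the reduction of $\llbracket\overline{\sff}\rrbracket$, $\llbracket H\rrbracket$ to $\llbracket\sff\rrbracket$ and the bound $\|\mathbb{S}_{F,\gt}\sigma\|_{L^2(F,\gt)} \le C\|\sigma|_F\|_{L^2(F,\gt)}$, which the paper treats implicitly under ``the other terms follow analogously.''
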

\begin{proof}
	Consider an interior $(N-1)$-simplex $F$ with adjacent elements $T_1$, $T_2$ such that $F=T_1\cap T_2$. From Lemma~\ref{lem:est_facet_terms} we have (noting that $H=\Tr\sff$)
	\[
	\| \llbracket \sff(\gt) \rrbracket \|_{L^\infty(F)} + \| \llbracket H(\gt) \rrbracket \|_{L^\infty(F)} \le  C \left( \|g_h-g\|_{W^{1,\infty}(T_1)} + \|g_h-g\|_{W^{1,\infty}(T_2)} \right).
	\]
	It follows that with Lemma~\ref{lem:est_facet_terms} and the trace inequality,
	\begin{align*}
		\bigg| \int_F (\sigma|_F) : \llbracket \overline{\sff}(\gt) \rrbracket : (\rho|_F)\,\omega_F(\gt) \bigg| &\le \|\llbracket \overline{\sff}(\gt) \rrbracket \|_{L^\infty(F,\gt)} \|\sigma|_F \|_{L^2(F,\gt)} \|\rho|_F\|_{L^2(F,\gt)} \\
		&\le C \|\llbracket \overline{\sff}(\gt) \rrbracket \|_{L^\infty(F)} \|\sigma|_F \|_{L^2(F)} \|\rho|_F\|_{L^2(F)} \\
		&\le  Ch_{T_1}^{-1}  \sum_{i=1}^2 \|g_h-g\|_{W^{1,\infty}(T_i)}  \nrm{\sigma}_{2,T_1} \nrm{\rho}_{2,T_1}.
	\end{align*}
	By the shape-regularity of $\mathcal{T}_h$, we  have as in the proof of Lemma~\ref{lemma:Bh_bnd_bound}
	\begin{align*}
		&\left|\mathring{\sum_F} \int_F (\sigma|_F) : \llbracket \overline{\sff}(\gt) \rrbracket : (\rho|_F)\,\omega_F(\gt) \right| 
		\le C\max_T \left( h_T^{-1} \|g_h-g\|_{W^{1,\infty}(T)}\right) \nrm{g_h-g}_2 \nrm{\rho}_2.
	\end{align*}
	The other terms follow analogously.
\end{proof}

Finally, we estimate the codimension-2 terms in \eqref{Ah}.
\begin{lemma}
	\label{lemma:Ah_bbnd_bound}
	There holds
	\begin{align*}
		&\left|\mathring{\sum_S}\int_S \big( 2\Theta_S(\gt) \langle \sigma|_S, \rho|_S \rangle_{\gt} - \Theta_S(\gt) \Tr_{\gt}(\sigma|_S) \Tr_{\gt}(\rho|_S) \big) \omega_S(\gt)\right|\\
		&\qquad\le C \max_T \left( h_T^{-2} \|g_h-g\|_{L^\infty(T)} \right) \nrmt{g_h-g}_2  \nrmt{\rho}_2.
	\end{align*}
\end{lemma}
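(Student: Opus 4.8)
The plan is to exploit the fact that the smooth metric $g$ has \emph{vanishing} angle defect. For any interior $S \in \mathring{\mathcal{T}}^{N-2}$ and any point of $S$, the simplices $T \supset S$ fill out the two-dimensional normal plane to $S$, so the corresponding dihedral wedges tile that plane. Since any inner product on $\mathbb{R}^2$ is linearly isometric to the Euclidean one, and a linear change of coordinates preserves the tiling, the dihedral angles measured by $g$ sum to $2\pi$; hence $\Theta_S(g) \equiv 0$. Integrating the evolution formula~\eqref{angledefectdot} along the path $\gt(s) = (1-s)g + s g_h$, whose velocity is $\sigma = g_h - g$, therefore gives
\[
\Theta_S(\gt(t)) = \Theta_S(g) + \int_0^t \frac{1}{2}\sum_{F \supset S} \llbracket \sigma(n_{\gt(s)},\nu_{\gt(s)}) \rrbracket_F \, ds = \int_0^t \frac{1}{2}\sum_{F \supset S} \llbracket \sigma(n_{\gt(s)},\nu_{\gt(s)}) \rrbracket_F \, ds.
\]
This expresses the angle defect of $\gt$ purely in terms of $g_h-g$, which is what lets us extract a factor of $\|g_h-g\|_{L^\infty}$ from a quantity that is otherwise $O(1)$.

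From this representation I would derive a pointwise bound on $\Theta_S(\gt)$ along $S$. Because the Euclidean lengths of $n_{\gt}$ and $\nu_{\gt}$ are bounded uniformly in $h$ and $t$ by~\eqref{gtbound}, each jump obeys $|\llbracket \sigma(n_{\gt},\nu_{\gt}) \rrbracket_F| \le C(|\sigma^+| + |\sigma^-|)$ pointwise, where $\sigma^\pm$ are the two traces of $\sigma = g_h-g$ at $F$. Using that the number of faces $F \supset S$ is bounded by shape-regularity and that $t \le 1$, this yields
\[
\|\Theta_S(\gt)\|_{L^\infty(S)} \le C \max_{T \supset S} \|g_h-g\|_{L^\infty(T)}.
\]

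With this estimate in hand the remainder is parallel to the proof of Lemma~\ref{lemma:Bh_bbnd_bound}. Bounding the $\gt$-inner products, traces, and volume form by their Euclidean counterparts via~\eqref{gtbound}, the integrand is controlled pointwise by $C\,|\Theta_S(\gt)|\,|\sigma|_S|\,|\rho|_S|$, so for a fixed $T \supset S$,
\[
\left| \int_S \big( 2\Theta_S(\gt) \langle \sigma|_S, \rho|_S \rangle_{\gt} - \Theta_S(\gt) \Tr_{\gt}(\sigma|_S) \Tr_{\gt}(\rho|_S) \big)\, \omega_S(\gt) \right| \le C \|\Theta_S(\gt)\|_{L^\infty(S)} \|\sigma|_S\|_{L^2(S)} \|\rho|_S\|_{L^2(S)}.
\]
Applying the codimension-2 trace inequality $\|\cdot|_S\|_{L^2(S)} \le C h_T^{-1} \nrmt{\cdot}_{2,T}$ to both $\sigma$ and $\rho$ produces the factor $h_T^{-2}$, and combining with the bound on $\Theta_S(\gt)$ gives a local estimate of the form $C\,h_T^{-2}\|g_h-g\|_{L^\infty(T)}\,\nrmt{\sigma}_{2,T}\nrmt{\rho}_{2,T}$. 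Summing over $S$, using shape-regularity (so that $h_T \sim h_{T'}$ for adjacent simplices and each $T$ meets boundedly many such $S$), and finishing with the Cauchy--Schwarz inequality $\sum_T \nrmt{\sigma}_{2,T}\nrmt{\rho}_{2,T} \le \nrmt{\sigma}_2 \nrmt{\rho}_2$ then yields the claimed bound with $\sigma = g_h-g$.

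The main obstacle, and the genuinely new ingredient compared with Lemma~\ref{lemma:Bh_bbnd_bound}, is the extraction of the factor $\|g_h-g\|_{L^\infty}$ from the angle defect: one must first recognize that $\Theta_S(g)=0$ for the smooth metric and then convert this, through~\eqref{angledefectdot}, into the usable $L^\infty(S)$ estimate above. Once the angle defect has been shown to be of size $O(\|g_h-g\|_{L^\infty})$, the trace-inequality and summation steps are routine and mirror the codimension-2 argument already carried out for $B_h$.
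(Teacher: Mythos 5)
Your proposal is correct and takes essentially the same route as the paper: the paper's proof simply defers to \cite[Lemma 4.13]{gawlik2023scalar}, whose argument is exactly the one you reconstructed---noting that $\Theta_S(g)=0$ pointwise for the smooth metric, integrating the evolution formula~\eqref{angledefectdot} along $\gt(t)$ to obtain $\|\Theta_S(\gt)\|_{L^\infty(S)} \le C \max_{T \supset S}\|g_h-g\|_{L^\infty(T)}$, and then finishing with the codimension-2 trace inequality, shape-regularity, and Cauchy--Schwarz over elements.
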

\begin{proof}
	The proof follows the same lines as the proof of \cite[Lemma 4.13]{gawlik2023scalar}.
\end{proof}

Collecting our results, we can state a bound on the bilinear form $A_h(\gt;\cdot,\cdot)$.
\begin{proposition} \label{prop:ahbound}
	For every $h \le h_0$, every $t \in [0,1]$, and every $\rho \in  H^2_0 S^0_2(\Omega)$, we have (with $\sigma = g_h-g$),
	\begin{align*}
		|A_h(\gt; \sigma, \rho)| 
		&\le C \left( 1 + \max_T h_T^{-2} \|g_h-g\|_{L^\infty(T)} + \max_T h_T^{-1} |g_h-g|_{W^{1,\infty}(T)} \right) \nrmt{g_h-g}_2 \nrmt{\rho}_2.
	\end{align*}
\end{proposition}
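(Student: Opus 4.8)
The plan is to read off the three groups of terms in the definition~\eqref{Ah} of $A_h(\gt;\sigma,\rho)$ (with $\sigma = g_h - g$) and to bound each group by the matching lemma just established. Concretely, the codimension-$0$ integrals over the $T$'s are controlled by Lemma~\ref{lemma:Ah_vol_bound}, the codimension-$1$ integrals over the $F$'s by Lemma~\ref{lemma:Ah_bnd_bound}, and the codimension-$2$ integrals over the $S$'s by Lemma~\ref{lemma:Ah_bbnd_bound}. Applying the triangle inequality to the three summands of~\eqref{Ah} and inserting these bounds immediately yields an estimate of the desired shape; the remaining work is purely a matter of reconciling the various mesh-dependent norms so that the right-hand side collapses into the single prefactor $\left(1 + \max_T h_T^{-2}\|g_h-g\|_{L^\infty(T)} + \max_T h_T^{-1}|g_h-g|_{W^{1,\infty}(T)}\right)\nrmt{g_h-g}_2\,\nrmt{\rho}_2$ advertised in the statement.

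For this reconciliation I would use two elementary embeddings, $\|\cdot\|_{L^2(\Omega)} \le \nrmt{\cdot}_2$ and $\nrm{\cdot}_2 \le \nrmt{\cdot}_2$, both immediate from the definitions of $\nrm{\cdot}_2$ and $\nrmt{\cdot}_2$. The volume bound of Lemma~\ref{lemma:Ah_vol_bound} reads $C\|g_h-g\|_{L^2(\Omega)}\|\rho\|_{L^2(\Omega)}$, which --- crucially, because the codimension-$0$ part of $A_h$ involves $\rho$ only algebraically and not through its derivatives --- is already $\le C\,\nrmt{g_h-g}_2\,\nrmt{\rho}_2$ and supplies the leading constant $1$ in the prefactor. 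The facet bound of Lemma~\ref{lemma:Ah_bnd_bound}, stated in terms of $\nrm{\cdot}_2$, is promoted to $\nrmt{\cdot}_2$ via the second embedding, and the codimension-$2$ bound of Lemma~\ref{lemma:Ah_bbnd_bound} already appears in the target norm.

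The one point requiring a little care is the $W^{1,\infty}$ factor in Lemma~\ref{lemma:Ah_bnd_bound}. I would split $\|g_h-g\|_{W^{1,\infty}(T)} = \|g_h-g\|_{L^\infty(T)} + |g_h-g|_{W^{1,\infty}(T)}$ and observe that, since $h_T \le \operatorname{diam}(\Omega)$ is bounded independently of $h$, the piece $\max_T h_T^{-1}\|g_h-g\|_{L^\infty(T)}$ is dominated by $C\max_T h_T^{-2}\|g_h-g\|_{L^\infty(T)}$ and thus folds into the codimension-$2$ term, while $\max_T h_T^{-1}|g_h-g|_{W^{1,\infty}(T)}$ is exactly the remaining summand of the prefactor. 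Summing the three contributions then gives the claimed bound. I do not anticipate any genuine obstacle: this is a bookkeeping argument that exactly parallels the proof of Proposition~\ref{prop:bhbound} (``combine the three lemmas''), the only substantive inputs being the three lemmas themselves, whose proofs already absorbed the $\gt$-dependence of $A_h(\gt;\cdot,\cdot)$ and the passage from $\gt$-norms to Euclidean norms through~\eqref{gtbound} and~\eqref{Lpequiv2}.
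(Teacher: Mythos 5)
Your proposal is correct and takes essentially the same approach as the paper, whose entire proof reads ``Combine Lemmas~\ref{lemma:Ah_vol_bound}, \ref{lemma:Ah_bnd_bound}, and~\ref{lemma:Ah_bbnd_bound}''; the norm reconciliation you spell out ($\|\cdot\|_{L^2(\Omega)}\le\nrmt{\cdot}_2$, $\nrm{\cdot}_2\le\nrmt{\cdot}_2$, and folding $h_T^{-1}\|g_h-g\|_{L^\infty(T)}$ into $C\,h_T^{-2}\|g_h-g\|_{L^\infty(T)}$ after splitting the $W^{1,\infty}$-norm) is precisely the bookkeeping the paper leaves implicit. The only point you leave tacit is that the sums in~\eqref{Ah} run over all $F$ and $S$ while the lemmas cover only interior simplices; this is harmless because $\rho\in H^2_0 S^0_2(\Omega)$ has vanishing trace on $\partial\Omega$ and the codimension-1 and codimension-2 terms of $A_h$ involve $\rho$ only algebraically, so the boundary contributions vanish (the same observation the paper records just after~\eqref{Bh2} for $B_h$).
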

\begin{proof}
	Combine Lemmas~\ref{lemma:Ah_vol_bound},~\ref{lemma:Ah_bnd_bound}, and~\ref{lemma:Ah_bbnd_bound}.
\end{proof}

\subsection{Putting it all together}

\begin{proof}[Proof of Theorem~\ref{thm:conv} and Corollary~\ref{cor:conv}]
	Using the integral representation~\eqref{integral_rep} of the error together with Proposition~\ref{prop:bhbound} and Proposition~\ref{prop:ahbound} yields Theorem~\ref{thm:conv}. To deduce Corollary~\ref{cor:conv}, we use the approximation property \eqref{optimalorder} of the optimal-order interpolant together with the bounds 
	\begin{align*}
		\|g_h-g\|_{L^2(\Omega)} &\le |\Omega|^{1/2-1/p} \|g_h-g\|_{L^p(\Omega)}, \\
		\left( \sum_T h_T^2 |g_h-g|_{H^1(T)}^2 \right)^{1/2} &\le |\Omega|^{1/2-1/p} \left( \sum_T h_T^p |g_h-g|_{W^{1,p}(T)}^p \right)^{1/p}, \\
		\left( \sum_T h_T^4 |g_h-g|_{H^2(T)}^2 \right)^{1/2} &\le |\Omega|^{1/2-1/p} \left( \sum_T h_T^{2p} |g_h-g|_{W^{2,p}(T)}^p \right)^{1/p},
	\end{align*}
	which hold for all $p \in [2,\infty]$ (with the obvious modifications for $p=\infty$).
\end{proof}

\begin{remark}[Non-convergence for piecewise constant Regge metrics]
	Corollary~\ref{cor:conv} states optimal convergence rates of order $r+1$ if optimal-order interpolants of order $r\geq 1$ are used.  In the lowest-order case ($r=0$), Corollary~\ref{cor:conv} makes no claims about the convergence of the distributional Einstein tensor.  This is because the upper bounds on the terms estimated in Lemma~\ref{lemma:Bh_bbnd_bound} and Lemma~\ref{lemma:Ah_bbnd_bound}---which correspond to the codimension-2 terms in $B_h$ and $A_h$---do not approach $0$ as $h \to 0$ when $r=0$.  The upper bounds on all other terms estimated above---which correspond to the codimension-0 and codimension-1 terms in $B_h$ and $A_h$---still converge linearly when $r=0$.  
	
	This is in agreement with the results in \cite{gawlik2023scalar} for the scalar curvature in dimension $N\geq 3$, where also no convergence is obtained in the lowest-order case. In the numerical experiment in Section~\ref{sec:numerical}, we observe a large pre-asymptotic regime where the total error still seems to converge linearly for $r=0$. To verify that our analysis is sharp, we will see numerically that the terms estimated in Lemma~\ref{lemma:Bh_bbnd_bound} and Lemma~\ref{lemma:Ah_bbnd_bound}, as well as their sum, fail to converge when $r=0$.  We will see that these terms are quite small in comparison to the total error (even on meshes with fairly high resolution), making the non-convergence of the total error difficult to detect.
	
	An improved convergence rate for the distributional Einstein tensor is expected if one uses the canonical Regge interpolant \cite[Chapter 2]{li2018regge} to interpolate the metric in dimension $N=3$. This superconvergent behavior has been observed for the scalar curvature in \cite{gawlik2023scalar} for $N=3$ and proved for the Gaussian curvature in \cite{gopalakrishnan2022analysis} for $N=2$. This, however, will be a topic of future research.
\end{remark}

\section{Numerical examples} \label{sec:numerical}
In this section we present numerical experiments in dimension $N=3$ to illustrate the predicted convergence rates. The examples were performed in the open source finite element library NGSolve\footnote{www.ngsolve.org} \cite{schoeberl1997netgen,schoeberl2014ngsolve}, where the Regge finite elements are available for arbitrary polynomial order. We construct an optimal-order interpolant $g_h$ of a given metric tensor $g$ as follows. On each element $T$, the local $L^2$ best-approximation $\bar{g}_h|_T$ of $g|_T$ is computed. Then the tangential-tangential degrees of freedom shared by two or more neighboring elements are averaged to obtain a globally tangential-tangential continuous interpolant $g_h$.  
In \cite[Appendix A]{gawlik2023scalar} a verification that this interpolant is an optimal-order interpolant in the sense of Remark~\ref{remark:optimalorder_general} on shape-regular, quasi-uniform triangulations is given.

To compute the $H^{-2}(\Omega)$-norm of the error $f:= (G\omega)_{\rm dist}(g_h)-(G\omega)(g)$ we make use of the fact that $\|f\|_{H^{-2}(\Omega)}$ is equivalent to $\|\rho\|_{H^2(\Omega)}$, where $\rho\in  H^2_0 S^0_2(\Omega)$ solves the (component-wise) biharmonic equation $\Delta^2 \rho = f$. This equation will be solved numerically using the (Euclidean) Hellan--Herrmann--Johnson method \cite{comodi1989hellan} for each component of $\rho$. To prevent the discretization error from spoiling the real error, we use for $\rho_h$ two polynomial orders more than for $g_h$.

We consider in dimension $N=3$ the example proposed in \cite{gawlik2023scalar} on the unit cube $\Omega=(-1,1)^3$.  The Riemannian metric tensor is induced by the embedding $(x,y,z)\mapsto (x,y,z,f(x,y,z))$, where $f(x,y,z):= \frac{1}{2}(x^2+y^2+z^2)-\frac{1}{12}(x^4+y^4+z^4)$. The scalar curvature is 
{\small\[
	R(g)(x,y,z) =\frac{18\left( (1-x^2)(1-y^2)(9+q(z)) + (1-y^2)(1-z^2)(9+q(x)) + (1-z^2)(1-x^2)(9+q(y)) \right) }{(9+q(x)+q(y)+q(z))^2},
	\]}
where $q(x) = x^2 (x^2-3)^2$, and the components of the (symmetric) Ricci tensor read
\begin{align*}
	\Ric_{xx} &= \frac{9(x^2 - 1)((y^2 + z^2 - 2)(q(x) + 9) + (z^2 - 1)q(y) + q(z)(y^2 - 1))}{(9+q(x)+q(y)+q(z))^2},\\
	\Ric_{yy} &= \frac{9(y^2 - 1)((x^2 + z^2 - 2)(q(y) + 9) + (z^2 - 1)q(x) + q(z)(x^2 - 1))}{(9+q(x)+q(y)+q(z))^2},\\
	\Ric_{zz} &= \frac{9(z^2 - 1)((y^2 + x^2 - 2)(q(z) + 9) + (x^2 - 1)q(y) + q(x)(y^2 - 1))}{(9+q(x)+q(y)+q(z))^2},\\
	\Ric_{xy}&=\frac{9(y^2-3)y(x^2-3)x(x^2-1)(y^2-1)}{(9+q(x)+q(y)+q(z))^2},\\
	\Ric_{xz}&=\frac{9(z^2-3)z(x^2-3)x(x^2-1)(z^2-1)}{(9+q(x)+q(y)+q(z))^2},\\
	\Ric_{yz}&=\frac{9(y^2-3)y(z^2-3)z(z^2-1)(y^2-1)}{(9+q(x)+q(y)+q(z))^2}.
\end{align*}
The exact Einstein tensor is therefore obtained from $G=\Ric-\frac{1}{2}R\,g$.

We start with a structured mesh consisting of $6\cdot 2^{3k}$ tetrahedra, with maximum edge length $\tilde{h}=\max_T h_T=\sqrt{3}\,2^{1-k}$ (and minimal edge length $2^{1-k}$) for $k=0,1,\dots$. To avoid possible superconvergence due to mesh symmetries, we perturb each component of the inner mesh vertices by a random number drawn from a uniform distribution in the range $[-\tilde{h}\,2^{-(2\cdot 3+1)/2},\tilde{h}\,2^{-(2\cdot 3+1)/2}]$. As depicted in Figure~\ref{fig:conv_plot} and listed in Table~\ref{tab:error_N3}, linear convergence is observed when $g_h$ has polynomial degree $r=0$. For $r=1$ and $r=2$, higher convergence rates are obtained as expected.  This indicates that Theorem~\ref{thm:conv} and Corollary~\ref{cor:conv} are sharp for $r\geq 1$. For $r=0$ we observe numerically linear convergence, which is better than predicted by Theorem~\ref{thm:conv}. However, further investigation suggests that the observed linear convergence for $r=0$ is pre-asymptotic. Indeed, to test if the estimates in Lemma~\ref{lemma:Bh_bbnd_bound} and Lemma~\ref{lemma:Ah_bbnd_bound}, as well as their sum, are sharp, we compute the $H^{-2}(\Omega)$-norms of the linear functionals
\begin{align}
	\label{critical_terms}
	\begin{split}
		&F_1:\rho \mapsto -\frac{1}{2}\int_0^1 \mathring{\sum_S} \int_S \sum_{F \supset S} \llbracket \rho(n_{\gt},\nu_{\gt}) \rrbracket_F \Tr_{\gt}(\sigma|_S) \omega_S(\gt)\,dt,\\
		& F_2:\rho \mapsto \frac{1}{2}\int_0^1 \mathring{\sum_S}\int_S \big( 2\Theta_S(\gt) \langle \sigma|_S, \rho|_S \rangle_{\gt} - \Theta_S(\gt) \Tr_{\gt}(\sigma|_S) \Tr_{\gt}(\rho|_S) \big) \omega_S(\gt)\,dt,\\
		& F_3=F_1+F_2.
	\end{split} 
\end{align}
We approximate the parameter integrals above by a Gaussian quadrature with 5 and 7 Gauss points. As depicted in Figure~\ref{fig:conv_plot_crit_term}, the norms of these functionals with $r=0$ stagnate at about $2\times 10^{-4}$ after first converging with a cubic rate. Note that this stagnation is not an artifact of our use of Gaussian quadrature to approximate the parameter integral; increasing the quadrature order has a negligible effect on the results. The number $2\times 10^{-4}$ is below the overall error of about $2\times 10^{-3}$ for the finest grid; cf. Table~\ref{tab:error_N3}. Therefore, the lack of convergence predicted by Theorem~\ref{thm:conv} is not yet visible in Figure~\ref{fig:conv_plot}. For $r=1$ the expected rate of $O(h^{2})$ for \eqref{critical_terms} is clearly obtained. 

\begin{figure}
	\centering
	\resizebox{0.49\textwidth}{!}{
		\begin{tikzpicture}
			\begin{loglogaxis}[
				legend style={at={(0,0)}, anchor=south west},
				xlabel={ndof},
				ylabel={error},
				ymajorgrids=true,
				grid style=dotted,
				]
				\addlegendentry{$r=0$}
				\addplot[color=red, mark=*] coordinates {
					( 19 ,  0.1735945878157065 )
					( 98 ,  0.1096780651415121 )
					( 604 ,  0.0412060471652656 )
					( 4184 ,  0.0173449045960033 )
					( 31024 ,  0.0079745131306056 )
					( 238688 ,  0.0038371474279442 )
					( 1872064 ,  0.0018741068783692 )
				};
				\addlegendentry{$r=1$}
				\addplot[color=blue, mark=square] coordinates {
					( 92 ,  0.1041118399847353 )
					( 556 ,  0.0280865892840945 )
					( 3800 ,  0.0122388012918726 )
					( 27952 ,  0.003765248852563 )
					( 214112 ,  0.0010181427529596 )
					( 1675456 ,  0.0002654317466455 )
					( 13255040 ,  6.768895327845512e-05 )
				};
				
				\addlegendentry{$r=2$}
				\addplot[color=teal, mark=x] coordinates {
					( 255 ,  0.0136422166900814 )
					( 1662 ,  0.018982534947297 )
					( 11892 ,  0.0024204839459068 )
					( 89736 ,  0.000333604985488 )
					( 696720 ,  4.389530389609079e-05 )
					( 5489952 ,  5.630818124097331e-06 )
				};
				
				\addplot[color=black, mark=none, style=dashed] coordinates {
					( 142, {0.5*142^(-1/3)} )
					( 1912096, {0.5*1912096^(-1/3)} )
				};
				
				\addplot[color=black, mark=none, style=dashed] coordinates {
					( 142, {2*142^(-2/3)} )
					( 1912096, {2*1912096^(-2/3)} )
				};
				\addplot[color=black, mark=none, style=dashed] coordinates {
					( 142, {10*142^(-3/3)} )
					( 1912096, {10*1912096^(-3/3)} )
				};
				
			\end{loglogaxis}
			
			\node (A) at (5, 4.) [] {$O(h)$};
			\node (B) at (5.8, 1.4) [] {$O(h^2)$};
			\node (C) at (3.5, 1.9) [] {$O(h^3)$};
	\end{tikzpicture}}
	
	\caption{Convergence of the distributional Einstein tensor in the $H^{-2}(\Omega)$-norm in dimension $N=3$ with respect to the number of degrees of freedom (ndof) of $g_h$ for $r=0,1,2$.}
	\label{fig:conv_plot}
\end{figure}
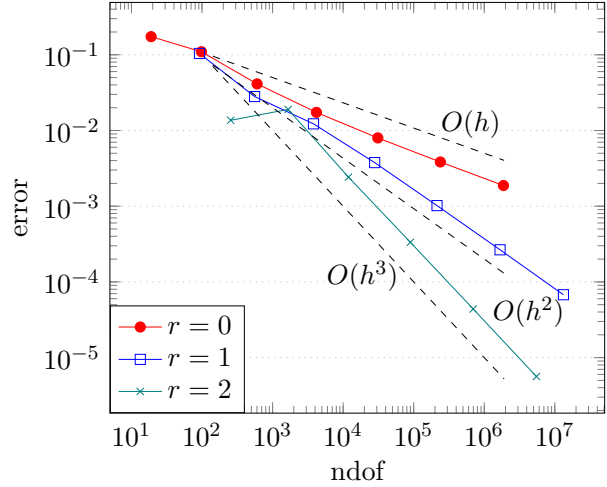

\begin{table}
	\centering
	\begin{tabular}{cccc}
		& $r=0$ & $r=1$ & $r=2$\\
		\hline
		$h$ & \begin{tabular}{@{}ll@{}}
			Error &\hspace{0.2in} Order \\
		\end{tabular} & \begin{tabular}{@{}ll@{}}
			Error &\hspace{0.2in} Order \\
		\end{tabular} & \begin{tabular}{@{}ll@{}}
			Error &\hspace{0.2in} Order \\
		\end{tabular} \\
		\hline
		\begin{tabular}{@{}l@{}}
			$3.464\cdot 10^{-0}$\\
			$1.851\cdot 10^{-0}$\\
			$9.772\cdot 10^{-1}$\\
			$5.245\cdot 10^{-1}$\\
			$2.682\cdot 10^{-1}$\\
			$1.362\cdot 10^{-1}$\\
			$6.913\cdot 10^{-2}$
		\end{tabular} & \begin{tabular}{@{}ll@{}}
			$1.736\cdot 10^{-1}$ &  \\
			$1.097\cdot 10^{-1}$ & 0.73\\
			$4.121\cdot 10^{-2}$ & 1.53\\
			$1.734\cdot 10^{-2}$ & 1.39\\
			$7.975\cdot 10^{-3}$ & 1.16\\
			$3.837\cdot 10^{-3}$ & 1.08\\
			$1.874\cdot 10^{-3}$ & 1.06
		\end{tabular} &\begin{tabular}{@{}ll@{}}
			$1.041\cdot 10^{-1}$ &  \\
			$2.809\cdot 10^{-2}$ & 2.09\\
			$1.224\cdot 10^{-2}$ & 1.3\\
			$3.765\cdot 10^{-3}$ & 1.89\\
			$1.018\cdot 10^{-3}$ & 1.95\\
			$2.654\cdot 10^{-4}$ & 1.98\\
			$6.769\cdot 10^{-5}$ & 2.01
		\end{tabular}&\begin{tabular}{@{}ll@{}}
			$1.364\cdot 10^{-2}$ &  \\
			$1.898\cdot 10^{-2}$ & -0.53\\
			$2.420\cdot 10^{-3}$ & 3.22\\
			$3.336\cdot 10^{-4}$ & 3.19\\
			$4.390\cdot 10^{-5}$ & 3.02\\
			$5.631\cdot 10^{-6}$ & 3.03\\
			\textcolor{white}{$10^ {-6}$}&
		\end{tabular}
	\end{tabular}
	\caption{Same as Figure~\ref{fig:conv_plot}, but in tabular form.}
	\label{tab:error_N3}
\end{table}

\begin{figure}
	\centering
	\resizebox{0.49\textwidth}{!}{
		\begin{tikzpicture}
			\begin{loglogaxis}[
				legend style={at={(1,1)}, anchor=north east},
				xlabel={ndof},
				ylabel={$H^{-2}(\Omega)$-norm of \eqref{critical_terms}},
				ymajorgrids=true,
				grid style=dotted,
				]
				\addlegendentry{$F_1$  gp$=5$}
				\addplot[color=red, mark=*] coordinates {
					( 19,0.02250307469867211 )
					( 98,0.00485378183252205 )
					( 604,0.0007208870687471094 )
					( 4184,0.0002478663426149099 )
					( 31024,0.000177969529542156 )
					( 238688,0.00019288263546361767 )
				};
				\addlegendentry{$F_2$ gp$=5$}
				\addplot[color=blue, mark=square] coordinates {
					( 19,0.014020337125753537 )
					( 98,0.0035638324421022054 )
					( 604,0.0005478656778400742 )
					( 4184,0.00016031839765653347 )
					( 31024,0.00010249260716239186 )
					( 238688,0.00010674987941857628 )
				};
				
				\addlegendentry{$F_3$  gp$=5$}
				\addplot[color=teal, mark=x] coordinates {
					( 19,0.026513406768010887 )
					( 98,0.00799269881705941 )
					( 604,0.0009914349697266638 )
					( 4184,0.00027995169651604906 )
					( 31024,0.0002113403680615124 )
					( 238688,0.00024387781945358033 )
				};
				
				\addlegendentry{$F_1$ gp$=7$}
				\addplot[color=black, mark=+, style=dashed] coordinates {
					( 19 ,  0.0225038134082313 )
					( 98 ,  0.0050902287757901 )
					( 604 ,  0.0006907565684082 )
					( 4184 ,  0.0002116050369151 )
					( 31024 ,  0.0001941790127142 )
					( 238688 ,  0.0002008034552744 )
				};
				\addlegendentry{$F_2$ gp$=7$}
				\addplot[color=orange, mark=x, style=dashed] coordinates {
					( 19 ,  0.0140269965877983 )
					( 98 ,  0.0036359536731315 )
					( 604 ,  0.0005386573344862 )
					( 4184 ,  0.0001447591440296 )
					( 31024 ,  0.0001119037363187 )
					( 238688 ,  0.0001103572752358 )
				};
				
				\addlegendentry{$F_3$ gp$=7$}
				\addplot[color=magenta, mark=diamond*, style=dashed] coordinates {
					( 19 ,  0.0265175559040985 )
					( 98 ,  0.0084018297052852 )
					( 604 ,  0.0009507762840758 )
					( 4184 ,  0.0002392009420786 )
					( 31024 ,  0.0002315831021922 )
					( 238688 ,  0.0002529509118983 )
				};
				
				\addplot[color=black, mark=none, style=dashed] coordinates {
					( 142, {5*142^(-3/3)} )
					( 10000, {5*10000^(-3/3)} )
				};
				
			\end{loglogaxis}
			
			\node (B) at (3.7, 3.6) [] {$O(h^3)$};
	\end{tikzpicture}}
	\resizebox{0.49\textwidth}{!}{
		\begin{tikzpicture}
			\begin{loglogaxis}[
				legend style={at={(1,1)}, anchor=north east},
				xlabel={ndof},
				ylabel={$H^{-2}(\Omega)$-norm of \eqref{critical_terms}},
				ymajorgrids=true,
				grid style=dotted,
				]
				\addlegendentry{$F_1$  gp$=5$}
				\addplot[color=red, mark=*] coordinates {
					( 92 ,  0.0107118717114141 )
					( 556 ,  0.0006236611448646 )
					( 3800 ,  0.0003384644458201 )
					( 27952 ,  8.365797475814737e-05 )
					( 214112 ,  2.263127057607625e-05 )
					( 1675456 ,  5.7328861394617e-06 )
				};
				\addlegendentry{$F_2$ gp$=5$}
				\addplot[color=blue, mark=square] coordinates {
					( 92 ,  0.0043176219697427 )
					( 556 ,  0.000304191007967 )
					( 3800 ,  0.0001757026353436 )
					( 27952 ,  4.379193612448932e-05 )
					( 214112 ,  1.1960304433469367e-05 )
					( 1675456 ,  3.0329230125491525e-06 )
				};
				
				\addlegendentry{$F_3$  gp$=5$}
				\addplot[color=teal, mark=x] coordinates {
					( 92 ,  0.0115334862424889 )
					( 556 ,  0.0008624946202095 )
					( 3800 ,  0.0004946302725587 )
					( 27952 ,  0.0001256411526558 )
					( 214112 ,  3.421453184914563e-05 )
					( 1675456 ,  8.672149822876838e-06 )
				};
				
				\addlegendentry{$F_1$ gp$=7$}
				\addplot[color=black, mark=+, style=dashed] coordinates {
					( 92 ,  0.0107108194695793 )
					( 556 ,  0.0006116495353578 )
					( 3800 ,  0.0002906420821927 )
					( 27952 ,  8.616800530248685e-05 )
					( 214112 ,  2.2475371218028536e-05 )
					( 1675456 ,  5.731940567240984e-06 )
					
				};
				\addlegendentry{$F_2$ gp$=7$}
				\addplot[color=orange, mark=x, style=dashed] coordinates {
					( 92 ,  0.004317453351147 )
					( 556 ,  0.0002990667494137 )
					( 3800 ,  0.0001515918632543 )
					( 27952 ,  4.5279217367080544e-05 )
					( 214112 ,  1.1871294447656983e-05 )
					( 1675456 ,  3.032488221989472e-06 )
				};
				
				\addlegendentry{$F_3$ gp$=7$}
				\addplot[color=magenta, mark=diamond*, style=dashed] coordinates {
					( 92 ,  0.0115322264672365 )
					( 556 ,  0.0008454877280855 )
					( 3800 ,  0.0004263860224318 )
					( 27952 ,  0.0001294954207459 )
					( 214112 ,  3.397108336984613e-05 )
					( 1675456 ,  8.670543034263645e-06 )
				};

				\addplot[color=black, mark=none, style=dashed] coordinates {
					( 6000, {0.015*6000^(-2/3)} )
					( 700000, {0.015*700000^(-2/3)} )
				};
				
				\addplot[color=black, mark=none, style=dashed] coordinates {
					( 120, {80*120^(-5/3)} )
					( 900, {80*900^(-5/3)} )
				};
				
			\end{loglogaxis}
			
			\node (A) at (4.2, 0.8) [] {$O(h^2)$};
			\node (B) at (2.2, 4.5) [] {$O(h^5)$};
	\end{tikzpicture}}
	
	\caption{Convergence of the three functionals in \eqref{critical_terms} in the $H^{-2}(\Omega)$-norm with respect to number of degrees of freedom (ndof) for 5 and 7 Gauss points (gp) in dimension $N=3$. Left: $r=0$. Right: $r=1$.}
	\label{fig:conv_plot_crit_term}
\end{figure}
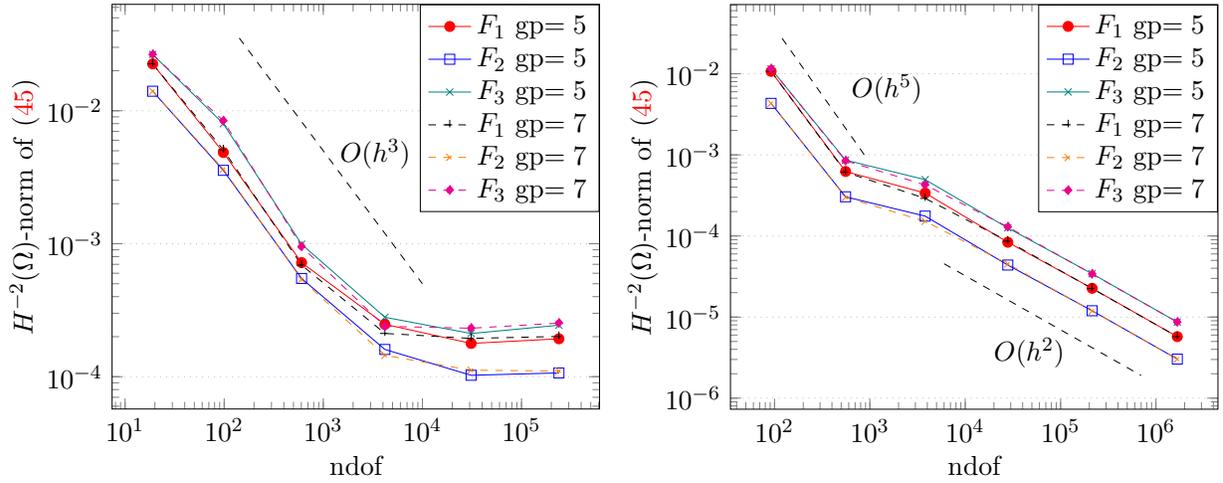

\section*{Acknowledgments}
EG was supported by NSF grant DMS-2012427 and the Simons Foundation award MP-TSM-00002615. MN acknowledges support by the Austrian Science Fund (FWF) project F\,65.

\appendix

\section{Proof of Lemma~\ref{lemma:trAhBh}}
\label{app:proof_lemma_trAhBh}

\begin{proof}[Proof of Lemma~\ref{lemma:trAhBh} (Equations~\eqref{trBh}-\eqref{trAh})]
	We will first show that $A_h(g; \sigma, vg) =  \left( \frac{N-4}{2} \right) a_h(g; \sigma, v)$ by choosing $\rho=vg$ in~(\ref{Ah}).  On each $N$-simplex $T$, we use the identities $2\sigma : \Riem : g = -2\langle \Ric, \sigma \rangle$, $\Tr(g)=N$, $\langle \Ric, g \rangle = R$, $\langle J\sigma, g \rangle = (1-\frac{N}{2})(\Tr\sigma)$, and $-2\sigma : \Ric : g = -2\langle \sigma, \Ric \rangle$ to compute
	\begin{align*}
		&2\sigma : \Riem : vg + \langle \Ric, \sigma \rangle \Tr(vg) + \langle \Ric, vg \rangle \Tr \sigma  + R \langle J\sigma, vg \rangle - 2\sigma:\Ric:vg \\
		&= -2\langle \Ric, \sigma \rangle v + N \langle \Ric, \sigma \rangle v + R(\Tr\sigma)v + (1-\tfrac{N}{2}) R (\Tr\sigma) v - 2 \langle \sigma, \Ric \rangle v \\
		&= (N-4)\langle \Ric, \sigma \rangle v + (2-\tfrac{N}{2}) R (\Tr \sigma) v \\
		&= (N-4) \langle G, \sigma \rangle v.
	\end{align*}
	On each $(N-1)$-simplex $F$, we use the identities $\Tr(g|_F) = N-1$, $\Tr \overline{\sff} = -(N-2)H$, and $\Tr(\mathbb{S}_F\sigma) = -(N-2)\Tr(\sigma|_F)$ to compute
	\begin{align*}
		-&3 (\sigma|_F) : \llbracket \overline{\sff} \rrbracket : (vg|_F) +  \langle \llbracket \overline{\sff} \rrbracket, \sigma|_F \rangle \Tr(vg|_F) + \Tr(\sigma|_F) \langle \llbracket \overline{\sff} \rrbracket, vg|_F \rangle - \llbracket H \rrbracket \langle \mathbb{S}_F \sigma, vg|_F \rangle\\
		&= -3 \langle \sigma|_F, \llbracket \overline{\sff} \rrbracket \rangle v + (N-1)  \langle \llbracket \overline{\sff} \rrbracket, \sigma|_F \rangle v - (N-2) \Tr(\sigma|_F) \llbracket H \rrbracket v + (N-2) \llbracket H \rrbracket \Tr(\sigma|_F) v  \\
		&= (N-4) \langle \llbracket \overline{\sff} \rrbracket, \sigma|_F \rangle v.
	\end{align*}
	Lastly, on each $(N-2)$-simplex $S$, we have
	\begin{align*}
		2 \Theta_S\langle \sigma|_S, vg|_S \rangle - \Theta_S \Tr(\sigma|_S) \Tr(vg|_S) 
		&= 2 \Theta_S \Tr(\sigma|_S) v - (N-2) \Theta_S \Tr(\sigma|_S) v= -(N-4) \Theta_S \Tr(\sigma|_S) v.
	\end{align*}
	It follows that
	\[
	2A_h(g;\sigma,vg) = (N-4) a_h(g;\sigma,v).
	\]
	By the symmetry of $A_h(g;\cdot,\cdot)$,~(\ref{trAh}) holds.
	
	Next we will show that $B_h(g; vg, \rho) = -\left( \frac{N-2}{2}\right) b_h(g; \rho,v)$ for all $\rho \in \Sigma$ and all $v \in \widetilde{V}$ by choosing $\sigma=vg$ in~(\ref{Bh}).  On each $N$-simplex $T$, we use the identities $Jg = g - \frac{N}{2}g = -\left(\frac{N-2}{2}\right)g$ and $J\nabla\nabla v = \nabla\nabla v - \frac{1}{2}g \Delta v$ to compute
	\begin{align*}
		2\ein(vg) 
		&= 2J\df\dv J (vg) - J\Delta (vg) \\
		&= -(N-2) J\df\dv(vg) - J(g\Delta v) \\
		&= -(N-2) J \nabla\nabla v + \left(\frac{N-2}{2}\right) g \Delta v \\
		&= -(N-2) \nabla \nabla v + \left(\frac{N-2}{2}\right) g \Delta v + \left(\frac{N-2}{2}\right) g \Delta v \\
		&= -(N-2)\mathbb{S}\nabla\nabla v.
	\end{align*}
	On each $(N-1)$-simplex $F$, we use the identities $\mathbb{S}_F g = -(N-2)g|_F$ and $\mathbb{S}\rho(n,n) = -\Tr(\rho|_F)$ to compute
	\begin{align*}
		\langle \llbracket \mathbb{S}_F \left( \nabla_n (vg) - 2(\nabla_F (vg))(n,\cdot) \right) \rrbracket, \rho|_F \rangle 
		&= \langle \llbracket \mathbb{S}_F \left( (\nabla_n v)g - 2(\nabla_F v) \otimes g(n,\cdot) \right) \rrbracket, \rho|_F \rangle \\
		&= \langle \llbracket \mathbb{S}_F \left( (\nabla_n v)g \right) \rrbracket, \rho|_F \rangle \\
		&= -(N-2)\llbracket \nabla_n v\rrbracket \Tr(\rho|_F) \\
		&= (N-2)\llbracket \nabla_n v\rrbracket \mathbb{S}\rho(n,n),
	\end{align*}
	where the second line follows from the fact that $g(n,\cdot)|_F=0$.  Also,
	\begin{align*}
		\langle &\llbracket vg(n,n) \overline{\sff} \rrbracket, \rho|_F \rangle + \langle vg|_F, \llbracket \sff \rrbracket \rangle \Tr(\rho|_F) - (vg|_F) : \llbracket \sff \rrbracket : (\rho|_F) \\
		&= \langle \llbracket \overline{\sff} \rrbracket, \rho|_F \rangle v + \llbracket H \rrbracket \Tr(\rho|_F) v - \langle \llbracket \sff \rrbracket, \rho|_F \rangle v \\
		&= \langle \llbracket \sff \rrbracket, \rho|_F \rangle v  - \langle \llbracket \sff \rrbracket, \rho|_F \rangle v \\
		&= 0.
	\end{align*}
	Lastly, the integrals over $S$ in~(\ref{Bh}) vanish when $\sigma=vg$ because $g(n,\nu)=0$.  It follows that
	\begin{align*}
		2B_h(g; vg, \rho) &= -(N-2) \sum_T \int_T \langle \mathbb{S} \nabla \nabla v, \rho \rangle \,\omega_T+ (N-2) \sum_{F}\int_F \llbracket \nabla_n v \rrbracket \mathbb{S}\rho(n,n) \,\omega_F \\
		&= -(N-2) \sum_T \int_T \langle \nabla \nabla v, \mathbb{S}\rho \rangle \,\omega_T + (N-2) \sum_{F}\int_F \llbracket \nabla_n v \rrbracket \mathbb{S}\rho(n,n)\, \omega_F \\
		&= -(N-2) b_h(g;\rho,v).
	\end{align*}
	This shows that the second equality in~(\ref{trBh}) holds. The first equality in~(\ref{trBh}) follows from the symmetry of $B_h(g;\cdot,\cdot)$.
\end{proof}
\section{Proof of Lemma~\ref{lem:eucl_distr_ein}}
\label{app:distr_Eucl_ein}

We prove Lemma~\ref{lem:eucl_distr_ein} by computing the distributional Euclidean linearized Einstein operator $\ein_{\rm dist}$.  This operator extends the classical linearized Einstein operator
\[
2\ein \sigma = 2 J \df\dv J \sigma - J \Delta \sigma
\]
to symmetric $(0,2)$-tensor fields $\sigma$ that are solely tangential-tangential continuous and piecewise smooth on an affine triangulation $\mathcal{T}$. 
In this section all differential operators, inner products, and geometric quantities, such as normal vectors, are understood in the Euclidean sense.  We also omit all (Euclidean) volume forms when writing integrals for notational simplicity.

\begin{proof}[Proof of Lemma~\ref{lem:eucl_distr_ein}]
	Let $\sigma$ be a tangential-tangential continuous and piecewise smooth symmetric $(0,2)$-tensor field, and let $\rho$ be a smooth symmetric $(0,2)$-tensor field with compact support. First, we use the definition of the distributional derivative and we integrate by parts on each element $T$ to obtain
	\begin{align}
		\llangle2&\ein_{\rm dist} \sigma, \, \rho\rrangle = \int_{\Omega} 2\sigma: \ein \rho = \sum_T\int_T  2J\sigma: \df\dv J \rho -  J\sigma: \Delta \rho\nonumber\\
		&=\sum_T-\int_T   2\dv J\sigma\cdot \dv J \rho -  \nabla J\sigma : \nabla \rho + \int_{\partial T}  2J\sigma(n,\cdot)\cdot \dv J \rho -  J\sigma : \nabla_n \rho\nonumber\\
		&=\sum_{T}\int_T 2\ein \sigma: \rho -\int_{\partial T} 2\dv J\sigma\cdot J \rho(n,\cdot) -  \nabla_n J\sigma: \rho+ \int_{\partial T} 2J\sigma(n,\cdot)\cdot \dv J \rho -  J\sigma: \nabla_n \rho.\label{eq:deriv_ein1}
	\end{align}
	Focusing on the second boundary integral in \eqref{eq:deriv_ein1}, we re-express the sum in terms of jumps of $\sigma$ to get
	\begin{align*}
		\sum_T\int_{\partial T}  2J\sigma(n,\cdot)\cdot \dv J \rho -  J\sigma: \nabla_n \rho &= \mathring{\sum_F}\int_{F}  2\llbracket J\sigma(n,\cdot)\rrbracket\cdot \dv J \rho -  \llbracket J\sigma\rrbracket: \nabla_n \rho\\
		&= \mathring{\sum_F}\int_{F} 2\llbracket J\sigma(n,\cdot)\rrbracket\cdot (\dv_F J \rho + \nabla_n(J\rho)(n,\cdot)) -  \llbracket J\sigma\rrbracket: \nabla_n \rho.
	\end{align*}
	We expand the terms not involving the surface divergence and use the fact that tangential-tangential jumps of $\sigma$ vanish. The first term reads
	\begin{align*}
		2\llbracket J\sigma(n,\cdot)\rrbracket \cdot\nabla_n(J\rho)(n,\cdot)  &= 2\llbracket\sigma(n,\cdot)-\frac{1}{2}(\Tr(\sigma|_F)+\sigma(n,n))n^\flat\rrbracket\cdot( (\nabla_n\rho)(n,\cdot)-\frac{1}{2}\Tr(\nabla_n\rho)n^\flat)\\
		&=2\llbracket\sigma(n,n)\rrbracket((\nabla_n\rho)(n,n)
		-\frac{1}{2}\Tr(\nabla_n\rho))+2\llbracket\sigma(n,\cdot)|_F\rrbracket\cdot(\nabla_n\rho)(n,\cdot)|_F \\&\quad -\llbracket\sigma(n,n)\rrbracket((\nabla_n\rho)(n,n)
		-\frac{1}{2}\Tr(\nabla_n\rho)) \\
		&=  \llbracket\sigma(n,n)\rrbracket(\nabla_n\rho)(n,n) -\frac{1}{2}\llbracket\sigma(n,n)\rrbracket\Tr( \nabla_n \rho) + 2\llbracket \sigma(n,\cdot)|_F\rrbracket\cdot(\nabla_n\rho)(n,\cdot)|_F
	\end{align*}
	and the second
	\begin{align*}
		\llbracket J\sigma\rrbracket: \nabla_n \rho
		&=  \llbracket \sigma|_F \rrbracket : (\nabla_n \rho)|_F+ 2 \llbracket \sigma(n,\cdot)|_F \rrbracket \cdot (\nabla_n \rho)(n,\cdot)|_F + \llbracket \sigma(n,n) \rrbracket (\nabla_n\rho)(n,n) \\ &\quad -\frac{1}{2}\llbracket (\Tr(\sigma|_F)+\sigma(n,n))g\rrbracket: \nabla_n \rho\\
		&= 2\llbracket \sigma(n,\cdot)|_F\rrbracket\cdot(\nabla_n\rho)(n,\cdot)|_F + \llbracket\sigma(n,n)\rrbracket(\nabla_n\rho)(n,n) -\frac{1}{2}\llbracket\sigma(n,n)\rrbracket\Tr( \nabla_n \rho).
	\end{align*}
	When subtracting them they cancel, so we can integrate by parts on each $F$ to get
	\begin{align}
		&\sum_T\int_{\partial T}  2J\sigma(n,\cdot)\cdot \dv J \rho -  J\sigma: \nabla_n \rho= \mathring{\sum_F}\int_{F}  2\llbracket J\sigma(n,\cdot)\rrbracket\cdot \dv_F J \rho\nonumber\\
		&=\mathring{\sum_F}-\int_{F} 2\nabla_F\llbracket J\sigma(n,\cdot)\rrbracket: J \rho+\int_{\partial F}2\llbracket J\sigma(n,\cdot)\rrbracket\cdot J \rho(\nu,\cdot)\nonumber\\
		&=\mathring{\sum_F}\int_{F} - \llbracket2(\nabla_F\sigma)(n,\cdot) -(\nabla_F\sigma)(n,n) \otimes n^\flat\rrbracket:\rho+\llbracket(\dv_F\sigma)(n)\rrbracket\Tr\rho+\int_{\partial F}2\llbracket J\sigma(n,\cdot)\rrbracket\cdot J \rho(\nu,\cdot).\label{eq:bnd1_term}
	\end{align}
	Next, we take a look at the first boundary integral in \eqref{eq:deriv_ein1}:
	\begin{align*}
		-\sum_T\int_{\partial T}  2\dv J\sigma\cdot J \rho(n,\cdot)-  \nabla_n J\sigma: \rho = -\mathring{\sum_F} \int_F 2\llbracket\dv J\sigma\rrbracket\cdot J \rho(n,\cdot)-  \llbracket\nabla_n J\sigma\rrbracket: \rho.
	\end{align*}
	We expand the first term and use the tangential-tangential continuity of $\sigma$ to get
	\begin{align*}
		2\llbracket\dv J\sigma&\rrbracket\cdot J\rho(n,\cdot) = \llbracket2\dv_F \sigma+2(\nabla_n J\sigma)(n,\cdot)-\nabla_F\Tr(\sigma|_F)-\nabla_F\sigma(n,n)\rrbracket\cdot J\rho(n,\cdot)\\
		&= \llbracket 2(\dv_F \sigma)(n)n^\flat+2(\nabla_n J\sigma)(n,\cdot)-\nabla_F\sigma(n,n)\rrbracket\cdot J\rho(n,\cdot)\\
		&= \llbracket2(\dv_F \sigma)(n)n^\flat+2(\nabla_n J\sigma)(n,\cdot)-\nabla_F\sigma(n,n)\rrbracket\cdot\rho(n,\cdot)\\&\quad-\llbracket(\dv_F \sigma)(n)+(\nabla_n J\sigma)(n,n)\rrbracket\Tr\rho.
	\end{align*}
	Thus,
	\begin{align}
		-\sum_T\int_{\partial T}&  2\dv J\sigma\cdot J \rho(n,\cdot)-  \nabla_n J\sigma: \rho \nonumber \\
		&= -\mathring{\sum_F} \int_F \llbracket(2\dv_F \sigma)(n)n^\flat+2(\nabla_n J\sigma)(n,\cdot)-\nabla_F\sigma(n,n)\rrbracket\cdot\rho(n,\cdot)\nonumber\\
		&\quad-\llbracket(\dv_F \sigma)(n)+(\nabla_n J\sigma)(n,n)\rrbracket\Tr\rho -  \llbracket\nabla_n J\sigma\rrbracket: \rho.\label{eq:bnd2_term}
	\end{align}
	Inserting \eqref{eq:bnd1_term} and \eqref{eq:bnd2_term} into \eqref{eq:deriv_ein1} gives
	\begin{align*}
		\llangle2\ein\sigma&,\rho\rrangle =\sum_{T}\int_T 2\ein \sigma: \rho +\mathring{\sum_F} \Big(\int_F\llbracket-(2\dv_F \sigma)(n)n^\flat-2(\nabla_n J\sigma)(n,\cdot)+\nabla_F\sigma(n,n)\rrbracket\cdot\rho(n,\cdot)\\
		&\qquad+ \llbracket(\dv_F \sigma)(n)+(\nabla_n J\sigma)(n,n)\rrbracket\Tr\rho +  \llbracket\nabla_n J\sigma\rrbracket:\rho \\
		&\qquad- \llbracket2(\nabla_F\sigma)(n,\cdot) -(\nabla_F\sigma)(n,n) \otimes n^\flat\rrbracket:\rho+\llbracket(\dv_F\sigma)(n)\rrbracket\Tr\rho \\&\qquad + \int_{\partial F} 2\llbracket (J\sigma)(n,\cdot)\rrbracket\cdot (J \rho)(\nu,\cdot)\Big)\\
		&=\sum_{T}\int_T 2\ein \sigma: \rho +\mathring{\sum_F} \Big(\int_F\llbracket-2(\dv_F \sigma)(n)\rrbracket\rho(n,n)+
		\llbracket\nabla_F\sigma(n,n)-2(\nabla_n J\sigma)(n,\cdot)\rrbracket\cdot\rho(n,\cdot)\\
		&\qquad+ \llbracket\nabla_n J\sigma-2\nabla_F\sigma(n,\cdot) +\nabla_F\sigma(n,n)\otimes n^\flat\rrbracket: \rho+\big(\llbracket2(\dv_F \sigma)(n)+(\nabla_n J\sigma)(n,n)\rrbracket\Tr\rho\big) \\
		&\qquad + \int_{\partial F} 2\llbracket (J\sigma)(n,\cdot)\rrbracket\cdot (J \rho)(\nu,\cdot)\Big)\\
		&=\sum_{T}\int_T 2\ein \sigma: \rho +\mathring{\sum_F} \Big(\int_F\llbracket\mathbb{S}_F(\nabla_n \sigma - 2(\nabla_F\sigma)(n,\cdot))\rrbracket:\rho|_F + \int_{\partial F} 2\llbracket (J\sigma)(n,\cdot)\rrbracket\cdot (J \rho)(\nu,\cdot)\Big).
	\end{align*}
	Here, we used the identities
	\begin{align*}
		&\mathbb{S}_F (\nabla_n \sigma): \rho|_F = -2(\nabla_n J\sigma)(n,\cdot)\cdot\rho(n,\cdot)+  \nabla_n J\sigma: \rho+(\nabla_n J\sigma)(n,n)\Tr\rho
	\end{align*}
	and
	\begin{align*}
		-2 \mathbb{S}_F ( \nabla_F \sigma(n,\cdot) ):\rho|_F  &= -2(\dv_F \sigma)(n)\rho(n,n)+2\nabla_F\sigma(n,n) \cdot \rho(n,\cdot)\\
		&\qquad-2\nabla_F\sigma(n,\cdot):\rho
		+2(\dv_F \sigma)(n)\Tr\rho,
	\end{align*}
	which can be verified by a straightforward computation.
	
	Finally, we consider the codimension-2 terms. Let $\{\tau_i\}_{i=1}^{N-2}$ be an orthonormal basis for the tangent space to $\partial F$.  After a short calculation, we obtain
	\begin{align*}
		2 J \sigma(n,\cdot) \cdot J \rho(\nu,\cdot) 
		&= 2\sum_{i=1}^{N-2}\sigma(n,\tau_i)\rho(\nu,\tau_i) + \sigma(n,\nu)\left( -\sum_{i=1}^{N-2}\rho(\tau_i,\tau_i) + \rho(\nu,\nu) - \rho(n,n) \right) \\
		&\quad + \left( -\sum_{i=1}^{N-2}\sigma(\tau_i,\tau_i) - \sigma(\nu,\nu) + \sigma(n,n) \right) \rho(\nu,n).
	\end{align*}
	If we think of $\sigma$ and $\rho$ as matrices and $n$, $\nu$, and $\tau_i$ as column vectors, then we can write this in matrix notation as $\Tr(A\rho)$, where
	\begin{align*}
		A &= 2 \tau \tau^T \sigma n \nu^T - \tau (\nu^T \sigma n) \tau^T + \nu \nu^T \sigma n \nu^T - n n^T \sigma \nu n^T - \nu \Tr(\tau^T \sigma \tau) n^T - \nu \nu^T \sigma \nu n^T + n n^T \sigma n \nu^T \\
		&= (2\tau \tau^T + \nu \nu^T + n n^T)\sigma n \nu^T - \tau (\nu^T \sigma n) \tau^T  - (\nu \nu^T + n n^T) \sigma \nu n^T - \nu \Tr(\tau^T \sigma \tau) n^T \\
		&= (I + \tau \tau^T)\sigma n \nu^T - \tau (\nu^T \sigma n) \tau^T - (I - \tau \tau^T) \sigma \nu n^T - \nu \Tr(\tau^T \sigma \tau) n^T \\
		&= \sigma (n\nu^T - \nu n^T) + \tau \tau^T \sigma n \nu^T - \tau (\nu^T \sigma n) \tau^T + \tau \tau^T \sigma \nu n^T - \nu \Tr(\tau^T \sigma \tau) n^T
	\end{align*}
	and $\tau$ is a matrix whose columns are $\tau_1,\tau_2,\dots,\tau_{N-2}$.
	Since $\Tr(\tau \tau^T \sigma n \nu^T \rho)  = \sum_{i=1}^{N-2} \Tr(\tau_i \tau_i^T \sigma n \nu^T \rho) = \sum_{i=1}^{N-2} \Tr(\nu \tau_i^T \sigma n \tau_i^T \rho)$, we have $\Tr(A\rho) = \Tr(B\rho)$, where
	\[
	B = \sigma(n \nu^T - \nu n^T) - \sum_{i=1}^{N-2} K_i \sigma L_i,
	\]
	$K_i = \nu \tau_i^T - \tau_i \nu^T$, and $L_i = \tau_i n^T - n \tau_i^T$.  When we integrate over $\partial F$ and sum over all $F$, we can rewrite the sum as
	\[
	\mathring{\sum_S} \sum_{T \supset S}  \int_{S} \sigma((n_1\nu_1^T - \nu_1 n_1^T) - (n_0\nu_0^T - \nu_0 n_0^T)) : \rho + \sum_{i=1}^{N-2} (K_{i0} \sigma L_{i0} - K_{i1} \sigma L_{i1}) :\rho,
	\]
	where $n_0,n_1$ are appropriately oriented unit normal vectors to the two faces of $T$ containing $S$, and similarly for $\nu_0,\nu_1$, $K_{i0},K_{i1}$, and $L_{i0},L_{i1}$.  Using the skew symmetry of $K_{i0},K_{i1},L_{i0},L_{i1}$ and the fact that $(\nu_1,n_1)$ is a rotation of $(\nu_0,n_0)$ in the 2-dimensional plane orthogonal to $S$, we can argue as in \cite[Remark 1]{christiansen2011linearization} that $K_{i0} \sigma L_{i0} - K_{i1} \sigma L_{i1}$ is symmetric and $(n_1\nu_1^T - \nu_1 n_1^T) - (n_0\nu_0^T - \nu_0 n_0^T)$ vanishes.  The tangential-tangential continuity of $\sigma$ implies that $\sum_{T \supset S} \left( \sum_{i=1}^{N-2} K_{i0} \sigma L_{i0} - K_{i1} \sigma L_{i1} \right)$ has the form $\sum_{i=1}^{N-2} a_i \tau_i^T$ for some vectors $a_1,a_2,\dots,a_{N-2}$.  Combining these facts, we deduce that
	\[
	\sum_{F \supset S}\int_S 2 \llbracket J \sigma(n,\cdot)\rrbracket \cdot J \rho(\nu,\cdot) = -\int_S \sigma_S \sum_{i=1}^{N-2} \tau_i \tau_i^T : \rho, \qquad\sigma_S := \sum_{F \supset S}\nu^T\llbracket\sigma\rrbracket_F n.
	\]
	It follows that
	\begin{align*}
		\mathring{\sum_F}\int_{\partial F} 2\llbracket J\sigma(n,\cdot)\rrbracket\cdot J \rho(\nu,\cdot)=\sum_T \sum_{F \subset \partial T}   \int_{\partial F} 2 J \sigma(n,\cdot) \cdot J \rho(\nu,\cdot) = -\mathring{\sum_S} \int_S \sum_{F \supset S} \llbracket \sigma(n,\nu) \rrbracket_F \Tr(\rho|_S).
	\end{align*}
	All together, we have
	\begin{align*}
		\llangle2\ein_{\rm dist}\sigma,\rho\rrangle = \sum_{T}\int_T 2\ein \sigma: \rho +\mathring{\sum_F} \int_F\llbracket\mathbb{S}_F(\nabla_n \sigma - 2(\nabla_F\sigma)(n,\cdot))\rrbracket:\rho|_F - \mathring{\sum_S} \int_S \sum_{F \supset S} \llbracket \sigma(n,\nu) \rrbracket_F \Tr(\rho|_S).
	\end{align*}
	Comparing the right-hand side with \eqref{eq:distr_eucl_ein} completes the proof of Lemma~\ref{lem:eucl_distr_ein}.
	
\end{proof}

\printbibliography

\end{document}